\newcommand{\R}{\mathbb R}
\newcommand{\N}{\mathbb N}
\newcommand{\cH}{\mathcal{H}}
\newcommand{\cK}{\mathcal{K}}
\newcommand{\cE}{\mathcal{E}}
\newcommand{\cL}{\mathcal{L}}
\newcommand{\cM}{\mathcal{M}}
\def\xC{{\rm C}}
\def\xLip{ {\rm Lip} }
\def\xlip{ {\rm lip} }
\def\xL{{\rm L}}
\def\xdiv{{\rm div}}
\newcommand{\V}{\|V\|}
\newcommand{\G}{G_{d,n}}
\newcommand{\Div}{\text{div}}
\newtheorem{prop}{Proposition}
\newtheorem{lemma}{Lemma}
\newtheorem{theo}{Theorem}
\newtheorem{dfn}{Definition}
\theoremstyle{remark}
\newtheorem{remk}{Remark}
\newtheorem{xmpl}{Example}
\DeclareMathOperator*{\argmin}{arg\,min}
\DeclareMathOperator*{\supp}{supp}
\DeclareMathOperator*{\heightex}{heightex}
\DeclareMathOperator*{\diam}{diam}
\title{Quantitative conditions of rectifiability for varifolds}
\author{Blanche \textsc{Buet}}
\begin{document}
     \maketitle
     
\newcommand{\one}{\mathds{1}}
\renewcommand{\phi}{\varphi}
\renewcommand{\epsilon}{\varepsilon}

\begin{abstract}
Our purpose is to state quantitative conditions ensuring the rectifiability of a $d$--varifold $V$ obtained as the limit of a sequence of $d$--varifolds $(V_i)_i$ which need not to be rectifiable. More specifically, we introduce a sequence $\left\lbrace \cE_i \right\rbrace_i$ of functionals defined on $d$--varifolds, such that if $\displaystyle \sup_i \cE_i (V_i) < +\infty$ and $V_i$ satisfies a uniform density estimate at some scale $\beta_i$, then $V = \lim_i V_i$ is $d$--rectifiable.

\noindent The main motivation of this work is to set up a theoretical framework where curves, surfaces, or even more general $d$--rectifiable sets minimizing geometrical functionals (like the length for curves or the area for surfaces), can be approximated by ``discrete'' objects (volumetric approximations, pixelizations, point clouds etc.) minimizing some suitable ``discrete'' functionals.  
\end{abstract}

	\tableofcontents

\section*{Introduction}

The set of regular surfaces lacks compactness properties (for Hausdorff convergence for instance), which is a problem when minimizing geometric energies defined on surfaces. In order to gain compactness, the set of surfaces can be extended to the set of varifolds and endowed with a notion of convergence (weak--$\ast$ convergence of Radon measures). Nevertheless, the problem turns to be the following: how to ensure that a weak--$\ast$ limit of varifolds is regular (at least in the weak sense of rectifiability)? W. K. Allard (see \cite{allard}) answered this question in the case where the weak--$\ast$ converging sequence is made of weakly regular surfaces (rectifiable varifolds to be precise). But what about the case when the weak--$\ast$ converging sequence is made of more general varifolds? Assume that we have a sequence of volumetric approximations of some set $M$, how can we know if $M$ is regular ($d$--rectifiable for some $d$), knowing only its successive approximations ?

\begin{minipage}{0.30\textwidth}
\includegraphics[width=0.95\textwidth]{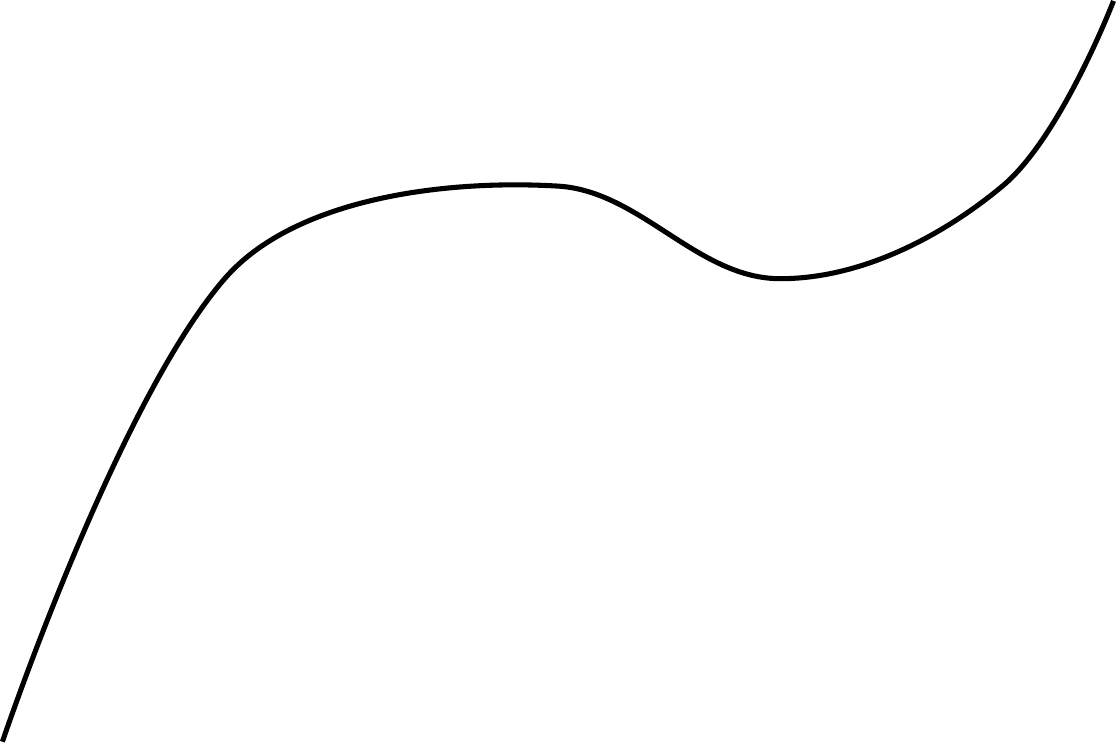}
\end{minipage}
\begin{minipage}{0.30\textwidth}
\includegraphics[width=0.95\textwidth]{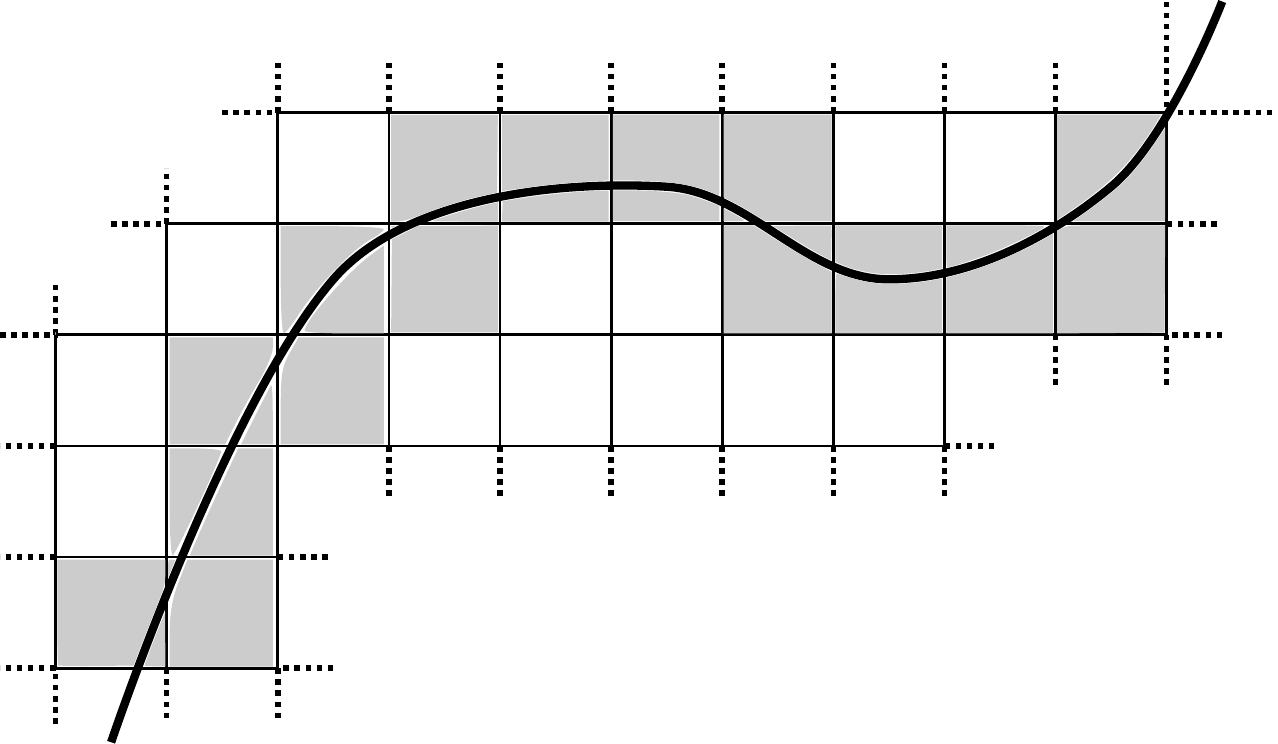}
\end{minipage}
\begin{minipage}{0.30\textwidth}
\includegraphics[width=0.95\textwidth]{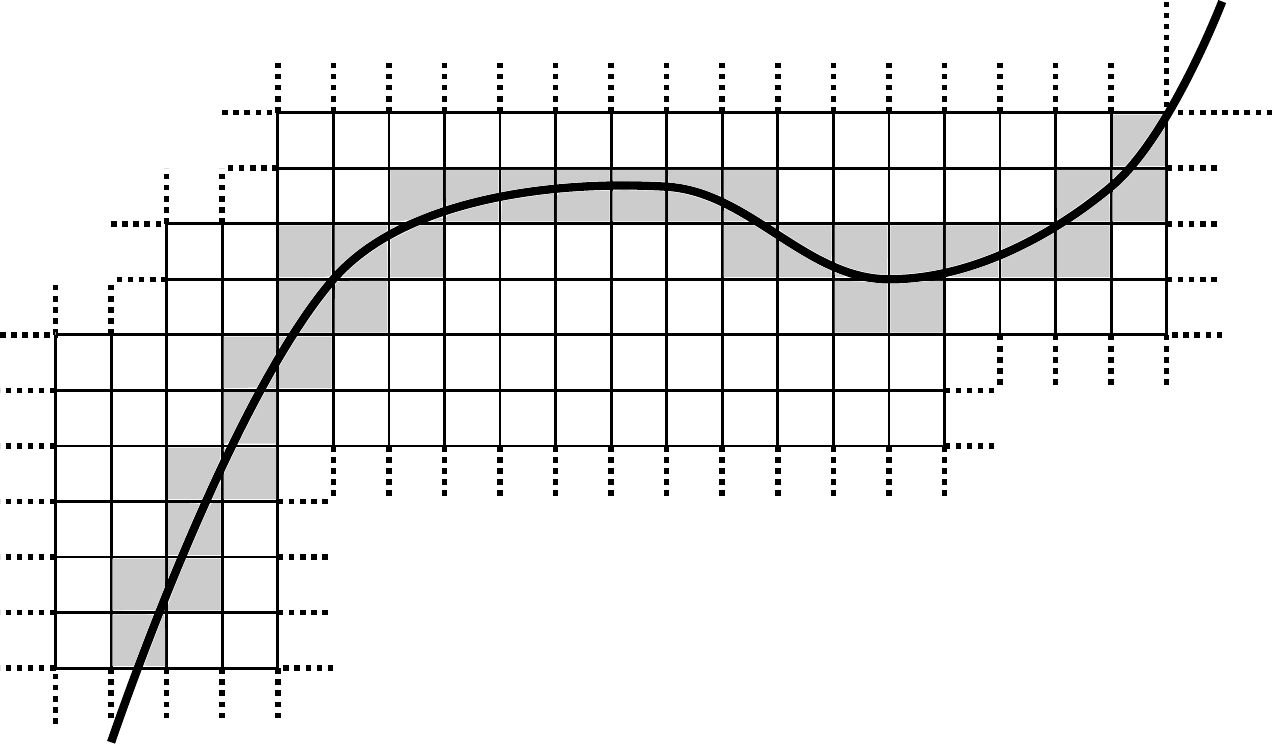}
\end{minipage}

As a set and its volumetric approximations can be endowed with a structure of varifold (as we will see), this problem can be formulated in terms of varifolds: we are interested in quantitative conditions on a given sequence of $d$--varifolds ensuring that the limit (when it exists) is rectifiable. Before going into technical details, let us consider the problem of rectifiability in simplified settings.
\begin{itemize}
\item First, let $f : \R \rightarrow \R$. We are look for conditions ensuring that $f$ is differentiable (in some sense). The most simple answer is to impose that the difference quotient has a finite limit everywhere. But assume that moreover, we ask for something more quantitative, that is to say some condition that could be expressed through bounds on some well chosen quantities (for instance, from a numerical point of view, it is easier to deal with bounded quantities than with the existence of a limit)). We will refer to this kind of condition as ``quantitative conditions'' (see also \cite{david_semmes2}). There exists an answer by Dorronsoro \cite{dorronsoro} (we give here a simplified version, see \cite{david_semmes}).
\begin{theo}[see \cite{dorronsoro} and \cite{david_semmes}]
Let $f: \R^d \rightarrow \R$ be locally integrable and let $q \geq 1$ such that $\displaystyle q < \frac{2d}{d-2}$ if $d>1$. Then, the distributional gradient of $f$ is in $\xL^2$ if and only if
\[
\int_{\R^d} \int_0^1 \gamma_q (x,r)^2 \, \frac{dr}{r} \, dx < + \infty \quad \text{with} \quad \gamma_q (x,r)^q = \inf_{\substack{a \text{ affine} \\ \text{function}}} \frac{1}{r^{d+1}} \int_{B_r(x)} \left| f(y) - a(y) \right|^q \, dy 
\]
\end{theo}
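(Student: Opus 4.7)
The plan is to prove the two implications separately, exploiting the link between local $L^q$-approximation errors $\gamma_q(x,r)$ and Littlewood-Paley / Sobolev analysis of $f$.

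\textbf{Easy direction} ($\nabla f \in L^2 \Rightarrow$ integral finite). Take as candidate affine approximant
$$a_{x,r}(y) = \overline{f}_{B_r(x)} + \overline{(\nabla f)}_{B_r(x)} \cdot (y-x),$$
the first-order Taylor polynomial with averaged coefficients. Applying the Poincaré-Sobolev inequality to $u(y) = f(y) - \overline{(\nabla f)}_{B_r(x)}\cdot(y-x)$, whose gradient is the oscillation $\nabla f - \overline{(\nabla f)}_{B_r(x)}$, yields
$$\gamma_q(x,r)^2 \leq C\, r^{2 - d - 2/q}\int_{B_r(x)} \bigl|\nabla f(y) - \overline{(\nabla f)}_{B_r(x)}\bigr|^2 \, dy,$$
provided $q \leq 2^\ast = 2d/(d-2)$; the hypothesis $q < 2d/(d-2)$ (trivial when $d \leq 2$) makes this choice admissible. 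Integrating against $dx\, dr/r$ and using Plancherel to pass to Fourier variables, the right-hand side reduces to $\int |\widehat{\nabla f}(\xi)|^2 m(\xi)\, d\xi$ for a bounded multiplier $m$ arising from the $r$-integral, itself bounded by $C\|\nabla f\|_{L^2}^2$.

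\textbf{Hard direction} (integral finite $\Rightarrow \nabla f \in L^2$). This is the Fourier-analytic half. The idea is to compare $\gamma_q(x,r)$ with a Littlewood-Paley block $\Delta_r f(x) := f \ast \psi_r(x)$, where $\psi$ is a smooth compactly supported bump with $\int \psi = 0$ and $\int y_j \psi(y)\, dy = 0$ for all $j$, so that $\Delta_r$ annihilates affine functions. Then for any affine $a$,
$$\Delta_r f(x) = \int \psi_r(x-y)\bigl(f(y) - a(y)\bigr)\, dy,$$
and Hölder on the support of $\psi_r$, applied to a near-optimal $a$ for $\gamma_q(x,r)$, produces the pointwise bound $|\Delta_r f(x)| \leq C\, r^{1/q}\, \gamma_q(x,r)$. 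A careful choice of $\psi$ with Littlewood-Paley normalization, combined with Plancherel and a Fefferman-Stein-type vector-valued inequality (needed to pass from the $L^q$ inside $\gamma_q$ to the $L^2$ outside), yields
$$\|\nabla f\|_{L^2}^2 \leq C \int_{\R^d}\int_0^1 \gamma_q(x,r)^2 \, \frac{dr}{r}\, dx < +\infty.$$

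\textbf{Main obstacle.} The hard direction is the delicate one: extracting $\nabla f \in L^2$ from scalar data $\gamma_q$ requires genuine Fourier analysis (or a Calderón reproducing formula) rather than just a clever geometric choice of approximants. The exponent restriction $q < 2d/(d-2)$ intervenes in both directions --- through Poincaré-Sobolev in the easy one, and through a Fefferman-Stein-type maximal inequality in the hard one --- to reconcile the $L^q$-based definition of $\gamma_q$ with the $L^2$-based target norm on $\nabla f$. A secondary subtlety is the precise matching of scales between $\gamma_q(x,r)$ and $\Delta_r f(x)$, which is exactly why the Dorronsoro weight $dr/r$ (and not some other power) appears in the characterization.
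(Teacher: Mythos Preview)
The paper does not prove this theorem. It is quoted in the introduction as a known result from the literature, with explicit references to Dorronsoro and David--Semmes, and serves only as motivation for the rectifiability questions treated later. There is therefore no proof in the paper to compare your attempt against.

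That said, your sketch follows the standard route to Dorronsoro's theorem: Poincar\'e--Sobolev with an averaged first-order Taylor approximant for the forward direction, and a Littlewood--Paley / Calder\'on reproducing-formula argument for the converse. This is indeed the shape of the original argument. A couple of points would need tightening in a full write-up: the exponent bookkeeping in your Poincar\'e--Sobolev step should be checked against the specific normalization $\gamma_q(x,r)^q = r^{-(d+1)}\int_{B_r(x)}|f-a|^q$ used here (which builds in a factor of $r^{-1}$ beyond the usual $L^q$ average), and in the hard direction the passage from the pointwise bound $|\Delta_r f(x)|\le C\,\gamma_q(x,r)$ to the full $\|\nabla f\|_{L^2}$ estimate requires a genuine square-function identity, not just an inequality, so the choice of $\psi$ must give a Calder\'on-type reproducing formula rather than merely vanishing moments. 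But none of this is relevant to the present paper, which simply cites the result.
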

The function $\gamma_q$ penalizes the distance from $f$ to its best affine approximation locally everywhere. This theorem characterizes the weak differentiability (in the sense of a $\xL^2$ gradient) quantitatively in terms of $xL^2$--estimate on $\gamma_q$ (with the singular weight $\frac{1}{r}$). 
\item Now, we take a set $M$ in $\R^n$ and we ask the same question: how to ensure that this set is regular (meaning $d$--rectifiable for some $d$)? Of course, we are still looking for quantitative conditions. This problem has been studied by P.W. Jones (for $1$--rectifiable sets) in connection with the \emph{travelling salesman problem} (\cite{jones1990rectifiable}) then by K. Okikiolu (\cite{okikiolu1992characterization}), by S. Semmes and G. David (\cite{david1991singular}) and by H. Pajot (\cite{MR1459297}). As one can see in the following result stated by H. Pajot in \cite{MR1459297}, the exhibited conditions are not dissimilar to Dorronsoro's. We first introduce the $\xL^q$ generalization of the so called Jones' $\beta$ numbers, (see \cite{jones1990rectifiable} for Jones' $\beta$ numbers and \cite{MR1459297} for the $\xL^q$ generalization):
\begin{dfn}
Let $M \subset \R^n$ and $d \in \N$, $d \leq n$.
\begin{align*}
\beta_\infty (x,r,M) & =  \inf_{P \text{ affine } d-\text{plane}} \sup_{y \in M \cap B_r(x)} \frac{d(y,P)}{r} \quad & \text{if } B_r(x) \cap M \neq \emptyset \: , \\
\beta_\infty (x,r,M)& =  0 \quad & \text{if } B_r(x) \cap M = \emptyset \: , \\
\beta_q (x,r,M) & =  \inf_{P \text{ affine } d-\text{plane}} \left( \frac{1}{r^d} \int_{y \in B_r(x) \cap M} \left( \frac{d(y,P)}{r} \right)^q \, d \cH^d(y) \right)^{\frac{1}{q}} \quad & \text{if } 1 \leq q < +\infty \: .
\end{align*}
\end{dfn}
The $\beta_q (x,r,M)$ measure the distance from the set $M$ to its best affine approximation at a given point $x$ and a given scale $r$. 
\begin{theo}[\cite{MR1459297}]\label{Pajot}
Let $M \subset \R^n$ compact with $\cH^d (M) < +\infty $. Let $q$ be such that 
\[
\left\lbrace \begin{array}{lll}
1 \leq q \leq \infty & \text{if} & d=1 \\
1 \leq q < \displaystyle \frac{2d}{d-2} & \text{if} & d \geq 2 \: .
\end{array} \right.
\]
We assume that for $\cH^d$--almost every $x \in M$, the following properties hold:
\begin{enumerate}[(i)]
\item $\theta_\ast^d (x, M) = \displaystyle \liminf_{r \downarrow 0} \displaystyle \frac{\cH^d (M \cap B_r(x))}{\omega_d r^d} > 0 $,
\item $\displaystyle \int_{r=0}^1 \beta_q (x,r,M)^2 \, \frac{dr}{r} < \infty $.
\end{enumerate}
Then $M$ is $d$--rectifiable.
\end{theo}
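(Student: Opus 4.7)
}
The plan is to localize the problem, upgrade the integrability of $\beta_q$ to pointwise control of the best approximating planes at dyadic scales, and then show that at almost every $x \in M$ these planes converge fast enough to yield an approximate tangent, from which standard rectifiability criteria apply. First I would perform a reduction step: by Egoroff and the $\sigma$-finiteness of $\cH^d \llcorner M$, for every $\epsilon > 0$ I can extract a compact subset $M_0 \subset M$ with $\cH^d(M \setminus M_0) < \epsilon$, and constants $\theta_0, C_0, r_0 > 0$ such that for every $x \in M_0$ and every $r \in (0, r_0]$,
\[
\cH^d(M \cap B_r(x)) \geq \theta_0 \, \omega_d \, r^d, \qquad \int_0^{r_0} \beta_q(x,r,M)^2 \, \frac{dr}{r} \leq C_0.
\]
Since rectifiability is a local property and $\cH^d(M \setminus \bigcup M_0) = 0$, it suffices to prove that $M_0$ is $d$-rectifiable.

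The core of the argument is the \emph{Cauchy property of best approximating planes along dyadic scales}. Fix $x \in M_0$ and write $P_k = P_k(x)$ for an affine $d$-plane achieving the infimum (or near infimum) in the definition of $\beta_q(x, 2^{-k}, M)$. The lower density bound converts an integral bound on $(d(y,P_k)/r)^q$ into a pointwise bound: by Chebyshev, on a subset of $M \cap B_{2^{-k}}(x)$ of $\cH^d$-measure at least $\tfrac{1}{2}\theta_0 \omega_d 2^{-kd}$ one has $d(y,P_k) \leq C \beta_q(x,2^{-k},M) \, 2^{-k}$. Comparing this bound for the planes $P_k$ and $P_{k+1}$ on two overlapping balls, an elementary geometric lemma (the graph of a plane at positive distance from another cannot contain too many points of a $d$-set) yields a control on the angle and translation
\[
\mathrm{dist}_{B_{2^{-k}}(x)}(P_k, P_{k+1}) \leq C \bigl(\beta_q(x,2^{-k},M) + \beta_q(x,2^{-k-1},M)\bigr) \, 2^{-k},
\]
which is the standard Jones--Dorronsoro step, and it is where the dimensional assumption on $q$ enters (it guarantees that the integral of the rate constants across overlapping scales converges). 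Summing and using Cauchy--Schwarz together with hypothesis (ii) gives that $\sum_k \mathrm{dist}(P_k, P_{k+1})/2^{-k}$ converges at $\cH^d$-almost every $x \in M_0$, so $P_k(x)$ converges to a limiting plane $T_x M$ with the speed required to be an approximate tangent plane.

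Once an approximate tangent plane exists $\cH^d$-a.e.\ on $M_0$, rectifiability follows from the standard criterion (e.g.\ in Mattila or Federer): a set of finite $\cH^d$ measure with positive lower density and an approximate tangent plane almost everywhere is $d$-rectifiable; in fact one can cover $\cH^d$-almost all of $M_0$ by countably many Lipschitz $d$-graphs built by a Whitney-type extension from the family $\{(x, T_x M)\}$.

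\textbf{Main obstacle.} The delicate step is the quantitative passage from the $L^q$-averaged flatness $\beta_q$ to a \emph{pointwise} control on the best affine plane at each dyadic scale, i.e.\ the comparison $\mathrm{dist}(P_k, P_{k+1}) \lesssim (\beta_q(x, 2^{-k}) + \beta_q(x, 2^{-k-1}))\, 2^{-k}$. This is where one genuinely uses that $q$ lies below the critical exponent $2d/(d-2)$: the geometric lemma bounding the angle between two $d$-planes in terms of the $\cH^d$-measure of a common "thick" set requires a $d$-dimensional Chebyshev/Sobolev-type inequality, which degenerates at $q = 2d/(d-2)$. Getting this estimate uniform in $x \in M_0$ while only assuming the integral bound on $\beta_q$ is the heart of Pajot's argument; everything else is reduction, summation, and invocation of a classical rectifiability criterion.
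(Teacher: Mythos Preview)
The paper does not prove this theorem. Theorem~\ref{Pajot} is stated in the introduction with an explicit citation to Pajot's paper \cite{MR1459297} and is used as a black box in the proof of Theorem~\ref{static_theorem} (Step~3). There is therefore no ``paper's own proof'' to compare against; your proposal is an attempt to reconstruct Pajot's original argument, not to reproduce anything in the present paper.

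That said, your outline is a reasonable sketch of the standard strategy in this circle of ideas (reduction to uniform density via Egoroff, Cauchy estimate for best-approximating planes across dyadic scales, and conclusion via an approximate-tangent-plane criterion). One point that is not quite accurate in your write-up: the dimensional restriction $q < 2d/(d-2)$ does not enter through a ``$d$-dimensional Chebyshev/Sobolev-type inequality'' in the plane-comparison lemma. In Pajot's argument the restriction is inherited from Dorronsoro's theorem, which is applied to the Lipschitz parametrizations one builds; the geometric comparison between $P_k$ and $P_{k+1}$ itself only needs the lower density bound and works for any $q \geq 1$. If you want to flesh this out into a full proof you should consult \cite{MR1459297} directly rather than the present paper, which treats the result as a cited tool.
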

\item Let us get closer to our initial question: now we consider the same question in the context of varifolds. Recall that from a mathematical point of view, a $d$--varifold $V$ in $\Omega \subset \R^n$ is a Radon measure on the product $\Omega \times \G$, where
\[
\G = \left\lbrace d\text{--dimensional subspaces of } \R^n \:  \right\rbrace \: .
\] Varifolds can be loosely seen as a set of generalized surfaces: let $M$ be a $d$--submanifold (or a $d$--rectifiable set) in $\Omega$ and denote by $T_x M$ its tangent plane at $x$, then the Radon measure $V(x,P) = \cH^d_{| M}(x) \otimes \delta_{T_x M}(P)$ is a $d$--varifold associated to $M$, involving both spatial and tangential information on $M$. The measure obtained by projecting $V$ on the spatial part $\Omega$ is called the mass $\V$. In the previous specific case where $V$ comes from a $d$--rectifiable set $M$ then the mass is $\V = \cH^d_{| M}$. See the next section for more details about varifolds. We can now state the first result that we obtain in this paper about quantitative conditions of rectifiability in the context of varifolds:
\begin{theo} \label{static_thm}
Let $\Omega \subset \R^n$ be an open set and let $V$ be a $d$--varifold in $\Omega$ with finite mass $\V (\Omega) < +\infty$. Assume that:
\begin{enumerate}[(i)]
\item there exist $0 < C_1 < C_2$ such that for $\V$--almost every $x \in \Omega$ and for every $r>0$,
\begin{equation} \label{static_density_condition}
C_1 r^d \leq \| V \| (B_r(x)) \leq C_2 r^d \: ,
\end{equation}
\item $\displaystyle \int_{\Omega \times \G} E_0 (x,P,V) \, d V(x,P) < +\infty $, where 
\[ E_0 (x,P,V) = \int_{r=0}^1 \frac{1}{r^d} \int_{y \in B_r(x) \cap \Omega} \left( \frac{d(y-x,P)}{r} \right)^2 \, d \V(y) \, \frac{dr}{r} 
\]
defines the \emph{averaged height excess}.
\end{enumerate}
Then $V$ is a rectifiable $d$--varifold.
\end{theo}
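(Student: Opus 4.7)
The plan is to apply Pajot's Theorem~\ref{Pajot} to the support $M := \supp \|V\|$ and then upgrade set-rectifiability of $M$ to rectifiability of the varifold $V$ itself.

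\textbf{Step 1 (Measure comparison and lower density).} Condition~\eqref{static_density_condition} expresses that $\|V\|$ is Ahlfors $d$-regular on $M$. Classical covering/differentiation arguments (Mattila's density estimates) then yield comparability $\|V\| \asymp \cH^d|_M$, with $\cH^d(M) < \infty$ and positive lower density $\theta_*^d(x,M) > 0$ at $\cH^d$-almost every $x \in M$. In particular, hypothesis (i) of Pajot's theorem is automatic.

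\textbf{Step 2 (Verification of the $\beta_2$ condition).} For any $x \in M$, $r \in (0,1)$ and $P \in \G$, using the affine plane $x + P$ as a competitor in the definition of $\beta_2$,
\[
\beta_2(x,r,M)^2 \leq \frac{1}{r^{d+2}} \int_{M \cap B_r(x)} d(y-x,P)^2 \, d\cH^d(y) \leq \frac{C}{r^{d+2}} \int_{B_r(x)} d(y-x,P)^2 \, d\|V\|(y),
\]
by the comparison of Step 1. Integrating in $r$ with the singular weight $dr/r$ and taking the infimum over $P$ gives $\int_0^1 \beta_2(x,r,M)^2 \, \frac{dr}{r} \leq C \inf_{P \in \G} E_0(x,P,V)$. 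Disintegrating $V = \|V\| \otimes V_x$, hypothesis (ii) reads $\int_\Omega \bigl( \int_\G E_0(x,P,V) \, dV_x(P) \bigr) d\|V\|(x) < \infty$, so for $\|V\|$-almost every $x$ there exists at least one $P^{\ast}(x) \in \G$ with $E_0(x,P^{\ast}(x),V) < \infty$. Hence $\int_0^1 \beta_2(x,r,M)^2 \, \frac{dr}{r} < \infty$ for $\cH^d$-a.e. $x \in M$, and Pajot's theorem guarantees that $M$ is $d$-rectifiable. Combined with Step 1, this produces $\|V\| = \theta \, \cH^d|_M$ with $\theta$ bounded away from $0$ and $\infty$.

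\textbf{Step 3 (Identification of the tangential part).} To conclude that $V$ is a rectifiable varifold we still need $V_x = \delta_{T_x M}$ for $\|V\|$-almost every $x$, where $T_x M$ is the approximate tangent to the rectifiable set $M$. By (ii), $E_0(x,P,V) < \infty$ for $V$-a.e. $(x,P)$. Suppose $P \neq T_x M$; choose a unit vector $v \in T_x M$ with $d(v,P) = \delta > 0$. The approximate-tangent property localises $M$ in an arbitrarily thin slab around $x + T_x M$, while the Ahlfors lower bound in~\eqref{static_density_condition} forces $\|V\|$ to pick up a definite portion of the region where $y-x$ has a component of order $r$ along $v$, for all small $r$. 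This yields
\[
\frac{1}{r^{d+2}} \int_{B_r(x)} d(y-x,P)^2 \, d\|V\|(y) \geq c\, \delta^2 \qquad \text{for all sufficiently small } r,
\]
whence $E_0(x,P,V) = +\infty$, a contradiction. Hence $V_x$ is supported on $\{T_x M\}$ and, being a probability measure, equals $\delta_{T_x M}$.

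\textbf{Main obstacle.} Steps 1--2 are essentially a translation of known set-theoretic results into the varifold setting via the comparison $\|V\| \asymp \cH^d|_M$. The delicate point is Step 3: converting the \emph{scalar} finiteness of the height excess into identification of the \emph{tangential} disintegration of $V$. The uniform lower bound on the inner integral relies on combining the approximate-tangent structure of the rectifiable set $M$ (just obtained in Step 2) with the Ahlfors lower bound on $\|V\|$, and is the only place where hypothesis (i) is genuinely used beyond mere bounded density.
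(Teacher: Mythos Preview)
Your proposal is correct and follows essentially the same route as the paper: compare $\|V\|$ with $\cH^d|_M$ via Ahlfors regularity, dominate the $\beta_2$-numbers by $\min_P E_0(x,P,V)$ to invoke Pajot's theorem, then use the blow-up argument ($P \neq T_xM \Rightarrow E_0(x,P,V) = +\infty$, the paper's Proposition~\ref{min_of_continuous_energy}) to identify the tangential disintegration. The one technicality you glossed over is that Pajot's theorem requires a \emph{compact} set, so the paper first exhausts $\Omega$ by relatively compact open sets $\omega_n$ with $\cH^d(M \cap \partial\omega_n) = 0$ before applying it; also, in your Step~3 the Ahlfors lower bound is not actually needed---once $\|V\| = \theta\,\cH^d|_M$ with $\theta > 0$, the approximate-tangent blow-up alone yields the uniform lower bound on the height excess.
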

The first assumption is called Ahlfors-regularity. It implies in particular that $V$ is $d$--dimensional but with some uniform control on the $d$--density. 
Adding the second assumption both ensures that the support $M$ of the mas measure $\V$ is a $d$--rectifiable set and that the tangential part of $V$ is coherent with $M$, that is to say $V = \V \otimes \delta_{T_x M}$. We will refer to these two conditions as \emph{static} quantitative conditions of rectifiability for a given $d$--varifold, by opposition to the next conditions, involving the limit of a sequence of $d$--varifolds, which we will refer to as the \emph{approximation} case. These static conditions are not very difficult to derive from Pajot's theorem, the difficult part is the next one: the approximation case.
\item Now we consider a sequence $(V_i)_i$ of $d$--varifolds (weakly--$\ast$) converging to a $d$--varifold $V$.The problem is to find quantitative conditions on $(V_i)_i$ that ensure the rectifiability of $V$? The idea is to consider the static conditions with uniform bounds and using a notion of scale encoded by the parameters $\alpha_i$ and $\beta_i$ in the following result:
\begin{theo} \label{dynamic_thm}
Let $\Omega \subset \R^n$ be an open set and let $(V_i)_i$ be a sequence of $d$--varifolds in $\Omega$ weakly--$\ast$ converging to some $d$--varifold $V$ of finite mass $\V (\Omega) < +\infty$.  Fix two decreasing and infinitesimal (tending to $0$) sequences of positive numbers $(\alpha_i)_i$ and $(\beta_i)_i$ and assume that:
\begin{enumerate}[(i)]
\item there exist $0 < C_1 < C_2$ such that for $\| V_i \|$--almost every $x \in \Omega$ and for every $\beta_i < r < d(x,\Omega^c)$,
\[
C_1  r^d \leq \| V_i \| (B_r(x)) \leq C_2 r^d \:  \label{(H1)},
\]
\item $\displaystyle \sup_i \int_{\Omega \times \G} E_{\alpha_i} (x,P,V_i) \, d V_i(x,P) < +\infty $, where
\[
E_{\alpha} (x,P,W) = \int_{r=\alpha_i}^1 \frac{1}{r^d} \int_{y \in B_r(x) \cap \Omega} \left( \frac{d(y-x,P)}{r} \right)^2 \, d \|W\|(y) \, \frac{dr}{r}
\]
denotes the $\alpha$--\emph{approximate averaged height excess}.
\end{enumerate}
Then $V$ is a rectifiable $d$--varifold.
\end{theo}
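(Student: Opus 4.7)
The plan is to reduce the dynamic statement to the static one: I aim to show that the limit varifold $V$ itself satisfies both hypotheses of Theorem~\ref{static_thm}, from which rectifiability follows. This means passing the uniform density estimates (i) and the uniform energy bound (ii) through the weak--$\ast$ limit $V_i \rightharpoonup V$.

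For the Ahlfors bounds, I would first extend the a.e.\ estimate to the whole of $\supp \|V_i\|$. If $x \in \supp \|V_i\|$, there is a sequence of ``good'' points $x_j \to x$, and the inclusions $B_{r - |x - x_j|}(x_j) \subset B_r(x) \subset B_{r + |x - x_j|}(x_j)$ combined with monotonicity of $\|V_i\|$ transfer the bounds to $x$ for $\beta_i < r < d(x, \Omega^c)$. Next, fix $x \in \supp \|V\|$; the weak--$\ast$ convergence $\|V_i\| \to \|V\|$ provides points $x^{(i)} \in \supp \|V_i\|$ with $x^{(i)} \to x$ (any neighborhood of $x$ eventually carries positive $\|V_i\|$--mass). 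Applying the same ball inclusions around $x^{(i)}$ and using Portmanteau, so that $\|V_i\|(B_r(x)) \to \|V\|(B_r(x))$ for all but countably many $r$, I let $i \to \infty$ with $\beta_i$ and $|x - x^{(i)}|$ both tending to $0$, which yields
\[
C_1 r^d \leq \|V\|(B_r(x)) \leq C_2 r^d \qquad \text{for every } 0 < r < d(x, \Omega^c),
\]
that is, Ahlfors--regularity for $V$ with no scale cutoff.

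For the energy bound, the real difficulty is that $E_\alpha(x, P, W)$ is itself defined by an integral against $\|W\|$, so $W \mapsto \int E_\alpha \, dW$ depends quadratically on $W$ and is not directly covered by standard weak--$\ast$ lower-semicontinuity results for integrals $\int f \, dW$. My plan is to linearize by Fubini: interchanging the order of integration rewrites the functional as
\[
\int_{\Omega \times \G} E_\alpha(x, P, W) \, dW(x, P) = \int_\alpha^1 \frac{1}{r^{d+2}} \int d(y - x, P)^2 \, \one_{|y - x| < r} \, d(W \otimes \|W\|)(x, P, y) \, \frac{dr}{r},
\]
an integral against the product measure $W \otimes \|W\|$. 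Since $V_i \rightharpoonup V$ weakly--$\ast$ with uniformly bounded mass, also $V_i \otimes \|V_i\| \rightharpoonup V \otimes \|V\|$ (reduce by Stone--Weierstrass to tensor test functions). For each fixed $r > 0$ the integrand is nonnegative and lower semicontinuous on $(\Omega \times \G) \times \Omega$, hence the $r$--integrand is a weakly--$\ast$ lower-semicontinuous function of $W$. Applying Fatou's lemma in $r$, together with $\one_{\{r > \alpha_i\}} \to 1$ pointwise since $\alpha_i \to 0$, I obtain
\[
\int_{\Omega \times \G} E_0(x, P, V) \, dV(x, P) \leq \liminf_i \int_{\Omega \times \G} E_{\alpha_i}(x, P, V_i) \, dV_i(x, P) < +\infty.
\]
Theorem~\ref{static_thm} applied to $V$ then gives rectifiability.

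I expect the main obstacle to lie in the lower--semicontinuity step just sketched: the quadratic dependence of the energy on $W$ is precisely what forces the Fubini--product--measure reformulation, and one must be careful that the cutoff scales $\alpha_i, \beta_i$ wash out correctly in the limit. Some additional bookkeeping is required in the Ahlfors step to handle the countable set of radii at which $\|V\|$ charges a sphere, and to ensure the approximating points $x^{(i)} \in \supp \|V_i\|$ can be selected so that the scale and boundary--distance conditions are eventually satisfied uniformly.
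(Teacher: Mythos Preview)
Your proposal is correct and follows a genuinely different, more direct route than the paper. For the Ahlfors bounds your argument coincides with the paper's Proposition~\ref{density_1}. For the energy bound the paper proceeds by localizing to relatively compact $\omega \subset\subset \Omega$, proving a uniform convergence result $\sup_{x,P}|E_{\alpha}^\omega(x,P,V_i)-E_\alpha^\omega(x,P,V)|\to 0$ via an Ascoli--type compactness argument on Lipschitz test functions (Propositions~\ref{uniform_weak_star_convergence}--\ref{convergence_integrated_energies}), extracting a subsequence along which the integrated energies converge, and finally patching the local conclusions together. Your approach bypasses all of this: the Fubini rewriting as an integral against the product measure $W\otimes\|W\|$, the observation that $V_i\otimes\|V_i\|\xrightharpoonup{\ast}V\otimes\|V\|$, lower semicontinuity of $\mu\mapsto\int f\,d\mu$ for nonnegative l.s.c.\ integrands, and Fatou in $r$ give
\[
\int_{\Omega\times\G}E_0(x,P,V)\,dV(x,P)\le\liminf_i\int_{\Omega\times\G}E_{\alpha_i}(x,P,V_i)\,dV_i(x,P)<+\infty
\]
in one stroke, with no subsequences and no localization. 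Your method is cleaner; the paper's machinery, on the other hand, yields more (actual convergence of the energies after extraction, with explicit control through the quantities $\eta_i^\omega$), which it then exploits in the appendix and in the discrete--varifold analysis. One minor point: for the product--measure convergence and the l.s.c.\ step you implicitly use $\sup_i\|V_i\|(\Omega)<+\infty$, which the paper adds explicitly when restating the theorem for its proof; you should note that this is needed.
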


\end{itemize}

We stress that the sequence $(V_i)_i$ in Theorem \ref{dynamic_thm} is not necessarily made of rectifiable $d$--varifolds.
The parameters $\alpha_i$ and $\beta_i$ allow to study the varifolds at a large scale (from far away). The main difficulty in the proof of Theorem \ref{dynamic_thm} is to understand the link between 
\begin{enumerate}[$-$]
\item the choice of $\alpha_i$ ensuring a good convergence of the successive approximate averaged height excess energies $E_{\alpha_i} (x,P,V_i)$ to the averaged height excess energy $E_0 (x,P,V)$
\item and a notion of convergence speed of the sequence $(V_i)_i$ obtained thanks to a strong characterization of weak--$\ast$ convergence.
\end{enumerate}
\noindent In the following example, we can guess that the parameters $\alpha_i$ and $\beta_i$ must be large with respect to the size of the mesh. Loosely speaking, in figure $(a)$, even in the smallest ball, the grey approximation ``looks'' $1$--dimensional. On the contrary, if we continue zooming like in figure $(b)$, the grey approximation ``looks'' $2$--dimensional. The issue is to give a correct sense to this intuitive fact.
\setcounter{subfigure}{0}
\begin{figure}[!htp]
\subfigure[]{\includegraphics[width=0.60\textwidth]{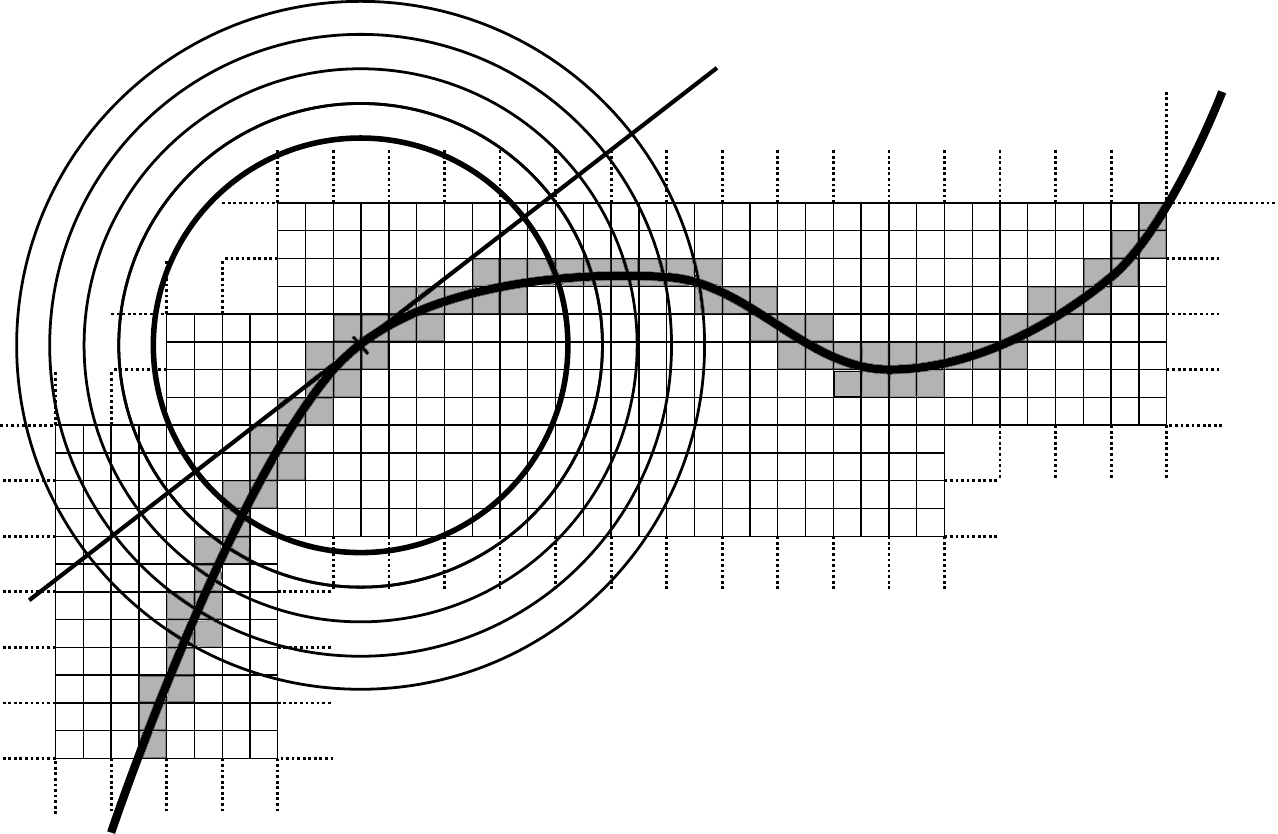}}
\subfigure[]{\includegraphics[width=0.38\textwidth]{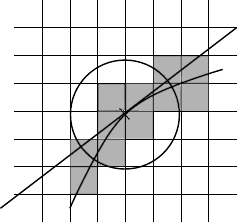}}
\end{figure}

The plan of the paper is the following: in section \ref{generalities_rectif} we collect some basic facts about rectifiability and varifolds that we need thereafter. Then in section \ref{static_section}, we state and prove quantitative conditions of rectifiability for varifolds in the static case. In section \ref{approximation_section}, we first establish a result of uniform convergence for the pointwise averaged height excess energies $E_\alpha$ thanks to a strong characterization of weak--$\ast$ convergence. This allows us to state and prove quantitative conditions of rectifiability for varifolds in the approximation case. In the appendix, we consider some sequence of $d$--varifolds weakly--$\ast$ converging to some rectifiable $d$--varifold $V= \theta \cH^d_{| M} \otimes \delta_{T_x M}$ (for some $d$--rectifiable set $M$) and we make a connection between the minimizers of $E_{\alpha_i} (x, \cdot , V_i)$, with respect to $P \in \G$, and the tangent plane $T_x M$ to $M$ at $x$.


\section{Some facts about rectifiability and varifolds} \label{generalities_rectif}

This section contains basic definitions and facts about rectifiability and varifolds. We start by fixing some notations.

From now on, we fix $d, \, n \in \N$ with $1\leq d < n$ and an open set $\Omega \subset \R^n$. Then we adopt the following notations.
\begin{enumerate}[$-$]
\vspace{-0.25cm}
\item $\cL^n$ is the $n$--dimensional Lebesgue measure.
\vspace{-0.25cm}
\item $\cH^d$ is the $d$--dimensional Hausdorff measure.
\vspace{-0.25cm}
\item $\xC_c^k (\Omega)$ is the space of continuous compactly supported functions of class $\xC^ k$ in $\Omega$.
\vspace{-0.25cm}
\item $B_r(x) = \left\lbrace y \, | \, |y-x| < r \right\rbrace$ is the open ball of center $x$ and radius $r$.
\vspace{-0.25cm}
\item $\G = \left\lbrace P \subset \R^n \, | \, P \text{ is a vector subspace of dimension } d \right\rbrace$.
\vspace{-0.25cm}
\item $A \triangle B = (A \cup B) \setminus (A \cap B )$ is the symmetric difference.
\vspace{-0.25cm}
\item $\xLip_k (\Omega)$ is the space of Lipschitz functions in $\Omega$ with Lipschitz constant less or equal to $k$.
\vspace{-0.25cm}
\item $\omega_d = \cL^d (B_1(0))$ is the $d$--volume of the unit ball in $\R^d$.
\vspace{-0.25cm}
\item For $P \in \G$, $\Pi_P$ is the orthogonal projection onto $P$.
\vspace{-0.25cm}
\item Let $\omega$ and $\Omega$ be two open sets then $\omega \subset\subset \Omega$ means that $\omega$ is relatively compact in $\Omega$.
\vspace{-0.25cm}
\item Let $\mu$ be a measure in some measurable topological space, then $\supp \mu$ denotes the topological support of $\mu$.
\vspace{-0.25cm}
\item Let $A \subset \Omega$ then $A^c = \Omega \setminus A$ denotes the complementary of $A$ in $\Omega$.
\vspace{-0.25cm}
\item Given a measure $\mu$, we denote by $| \mu |$ its total variation.
\vspace{-0.25cm}
\end{enumerate}

\subsection{Radon measure and weak--$\ast$ convergence}

We recall here some useful properties concerning vector-valued Radon measures and weak--$\ast$ convergence. See \cite{evans} and \cite{ambrosio} for more details.

\begin{dfn}[weak--$\ast$ convergence of Radon measures, see. \cite{ambrosio} def. 1.58 p. 26]
Let $\mu$ and $( \mu_i )_i$ be $\R^m$--vector valued Radon measures in $\Omega \subset \R^n$. We say that $\mu_i$ weakly--$\ast$ converges to $\mu$, denoted $\mu_i \xrightharpoonup[i\to \infty]{\ast} \mu$ if for every $\phi \in \xC_c (\Omega , \R^m)$,
\[
\int_\Omega \varphi \cdot d \mu_i \xrightarrow[i \to \infty]{} \int_\Omega \varphi \cdot d \mu \: .
\]
\end{dfn}

\noindent Thanks to Banach-Alaoglu weak compactness theorem, we have the following result in the space of Radon measures.

\begin{prop}[Weak--$\ast$ compactness, see \cite{ambrosio} Theorem. 1.59 and 1.60 p. 26]
Let $\left( \mu_i \right)_i$ be a sequence of Radon measures in some open set $\Omega \subset \R^n$ such that $\sup_i | \mu_i | (\Omega ) < \infty$ then there exist a finite Radon measure $\mu$ and a subsequence $(\mu_{\phi(i)})_i$ weakly--$\ast$ converging to $\mu$.
\end{prop}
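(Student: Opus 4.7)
The plan is to recast the statement as an instance of the Banach--Alaoglu theorem applied to a suitable Banach space of test functions, and then to use separability to upgrade weak-$\ast$ compactness to sequential weak-$\ast$ compactness. First, I would use the Riesz--Markov representation theorem to identify the space of finite $\R^m$-valued Radon measures on $\Omega$ with the topological dual of $\xC_0(\Omega,\R^m)$, the closure of $\xC_c(\Omega,\R^m)$ for the sup norm, in such a way that the total variation $|\mu|(\Omega)$ corresponds exactly to the dual norm. The hypothesis $\sup_i |\mu_i|(\Omega)<+\infty$ then places the sequence $(\mu_i)_i$ inside a closed ball $B_R$ of this dual.

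Since $\Omega$ is an open subset of $\R^n$, it is $\sigma$-compact and second-countable, so $\xC_0(\Omega,\R^m)$ is a separable Banach space. By Banach--Alaoglu, the ball $B_R$ is weak-$\ast$ compact; separability guarantees that the weak-$\ast$ topology restricted to $B_R$ is metrizable, hence sequentially compact. One can therefore extract a subsequence $(\mu_{\phi(i)})_i$ converging in the weak-$\ast$ sense of the dual to some element $L \in B_R$, which by Riesz--Markov corresponds to a finite Radon measure $\mu$ with $|\mu|(\Omega)\le R$. Since $\xC_c(\Omega,\R^m)\subset \xC_0(\Omega,\R^m)$, this yields a fortiori $\int \varphi\cdot d\mu_{\phi(i)}\to \int \varphi\cdot d\mu$ for every $\varphi\in \xC_c(\Omega,\R^m)$, which is exactly the definition of weak-$\ast$ convergence given above.

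The only real obstacle is bookkeeping the functional-analytic setup: $\xC_c(\Omega,\R^m)$ with the sup norm is not complete, so one must pass through its completion $\xC_0(\Omega,\R^m)$ in order to apply both Banach--Alaoglu and the Riesz--Markov identification. A hands-on alternative that bypasses this machinery is to fix a countable family $\{\varphi_k\}_k$ that is sup-norm dense in $\xC_c(K_\ell,\R^m)$ for an exhausting sequence of compact sets $K_\ell\subset\subset \Omega$, use the uniform bound $|\int \varphi_k\cdot d\mu_i|\le R\,\|\varphi_k\|_\infty$ to perform a diagonal extraction so that $\int \varphi_k\cdot d\mu_{\phi(i)}$ converges for every $k$, pass to an equicontinuous limit to define a bounded linear functional on $\xC_0(\Omega,\R^m)$, and finally represent it by a measure $\mu$ via Riesz--Markov. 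Either route produces the stated subsequential weak-$\ast$ limit.
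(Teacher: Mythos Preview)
The paper does not actually supply a proof of this proposition: it is stated with a citation to \cite{ambrosio}, Theorems~1.59 and~1.60, and then used as a black box. So there is no ``paper's own proof'' to compare against.

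Your argument is correct and is in fact the standard one underlying the cited reference. Identifying finite $\R^m$--valued Radon measures with the dual of the separable Banach space $\xC_0(\Omega,\R^m)$ via Riesz--Markov, so that $|\mu|(\Omega)$ becomes the dual norm, is exactly the right setup; Banach--Alaoglu plus separability then gives sequential weak--$\ast$ compactness of the closed ball. Your remark that one must work with $\xC_0$ rather than $\xC_c$ (the latter not being complete under the sup norm) is the one genuine subtlety, and you have handled it. The alternative diagonal-extraction route you sketch is also valid and is essentially how one proves metrizability of the weak--$\ast$ topology on bounded sets in the first place. Nothing is missing.
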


Let us now study the consequences of weak--$\ast$ convergence on Borel sets.

\begin{prop}[see $1.9$ p.$54$ in \cite{evans}] \label{weak_cv_borel_sets}
Let $\left( \mu_i \right)_i$ be a sequence of positive Radon measures weakly--$\ast$ converging to $\mu$ in some open set $\Omega \subset \R^n$. Then,
\begin{enumerate}
\item for every compact set $K \subset \Omega$, $\limsup_i \mu_i (K) \leq \mu (K)$ and for every open set $U \subset \Omega$, $\mu (U) \leq \liminf_i \mu_i (U)$.
\item $\lim_i \mu_i (B) = \mu (B)$ for every Borel set $B \subset \Omega$ such that $\mu (\partial B)=0$.
\end{enumerate}
\end{prop}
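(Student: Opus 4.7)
The plan is to reduce everything to the definition of weak--$\ast$ convergence by sandwiching indicator functions of $K$ and $U$ between continuous, compactly supported test functions. Since $\mu$ is a positive Radon measure on the locally compact space $\Omega$, it is both outer regular on Borel sets and inner regular on open sets, which will allow me to approximate $\mu(K)$ from above and $\mu(U)$ from below by integrals against such test functions.

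For the first inequality, fix a compact $K\subset\Omega$ and $\epsilon>0$. By outer regularity of $\mu$ I can choose an open set $W$ with $K\subset W\subset\subset \Omega$ such that $\mu(W)\leq \mu(K)+\epsilon$. Using Urysohn's lemma I pick $\phi\in\xC_c(\Omega)$ with $\one_K\leq \phi\leq \one_W$. Then
\[
\limsup_i \mu_i(K)\leq \limsup_i \int_\Omega \phi\,d\mu_i = \int_\Omega \phi\,d\mu \leq \mu(W)\leq \mu(K)+\epsilon,
\]
and letting $\epsilon\to 0$ gives the claim. For the open set $U\subset\Omega$, I argue dually: for any compact $K\subset U$ pick $\phi\in\xC_c(U)$ with $\one_K\leq \phi\leq 1$, so that
\[
\mu(K)\leq \int_\Omega \phi\,d\mu = \lim_i\int_\Omega \phi\,d\mu_i \leq \liminf_i \mu_i(U).
\]
Taking the supremum over compact $K\subset U$ and invoking inner regularity of $\mu$ on the open set $U$ yields $\mu(U)\leq \liminf_i \mu_i(U)$.

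For the second item, let $B\subset\Omega$ be a Borel set with $\mu(\partial B)=0$. Writing $\mathring{B}$ for its interior and $\overline{B}$ for its closure (so that $\overline{B}\setminus \mathring{B}=\partial B$), I combine the two inequalities already proved. Assuming for a moment that $\overline{B}\subset\Omega$, item~1 gives
\[
\mu(\mathring{B})\leq \liminf_i \mu_i(\mathring{B})\leq \liminf_i \mu_i(B)\leq \limsup_i \mu_i(B)\leq \limsup_i \mu_i(\overline{B})\leq \mu(\overline{B}).
\]
The hypothesis $\mu(\partial B)=0$ collapses the outer terms: $\mu(\mathring{B})=\mu(B)=\mu(\overline{B})$. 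Hence all four middle quantities coincide and $\mu_i(B)\to \mu(B)$. If $\overline{B}$ is not contained in $\Omega$ (since $\Omega$ need not be the whole space), I would localize by intersecting with a slightly smaller open set $\Omega'\subset\subset\Omega$ with $\mu(\partial\Omega')=0$ — such $\Omega'$ exist for a.e.\ dilation parameter — and run the same argument there.

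I do not anticipate a real obstacle; the only care point is to make sure the test functions used in step~1 can be chosen in $\xC_c(\Omega)$ (not merely in $\xC_c(\R^n)$), which is why I require $W\subset\subset\Omega$ and $K\subset U$ with $\overline{K}\subset U$. Everything else is a book-keeping application of the definition of weak--$\ast$ convergence together with inner/outer regularity of positive Radon measures.
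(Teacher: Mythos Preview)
The paper does not prove this proposition; it simply records it with a reference to Evans. Your argument for item~(1) is the standard Urysohn sandwich and is correct.

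For item~(2), your chain of inequalities is also the standard route, but applying item~(1) to $\overline{B}$ requires $\overline{B}$ to be \emph{compact}, not merely contained in $\Omega$. Without that, the conclusion can actually fail: take $\Omega=\R$, $\mu_i=\delta_i$ (Dirac mass at the integer $i$), $\mu=0$, and $B=\R$; then $\mu_i$ weakly--$\ast$ converges to $\mu$, $\partial B=\emptyset$ so $\mu(\partial B)=0$, yet $\mu_i(B)=1\not\to 0=\mu(B)$. Your proposed localization --- intersecting with $\Omega'\subset\subset\Omega$ with $\mu(\partial\Omega')=0$ --- only yields $\mu_i(B\cap\Omega')\to\mu(B\cap\Omega')$; passing from this to $\mu_i(B)\to\mu(B)$ would require an interchange of limits that is not justified without an extra hypothesis such as $\sup_i\mu_i(\Omega)<+\infty$. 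In Evans the statement is for bounded Borel sets, and in the paper the result is only ever applied to balls; with the proviso that $\overline{B}$ is compact in $\Omega$, your proof is complete and is exactly the intended argument.
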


\noindent Each one of the two properties in Proposition \ref{weak_cv_borel_sets} is actually a characterization of weak--$\ast$ convergence. Let us state a similar result in the vector case.

\begin{prop}[see \cite{ambrosio} Prop. 1.62(b) p. 27]
Let $\Omega \subset \R^n$ be an open set and let $( \mu_i )_i$ be a sequence of $\R^m$--vector valued Radon measures weakly--$\ast$ converging to $\mu$. Assume in addition that the total variations $| \mu_i |$ weakly--$\ast$ converge to some positive Radon measure $\lambda$. Then $| \mu | \leq \lambda$ and for every Borel set $B \subset \Omega$ such that $\lambda ( \partial B )=0$, $\mu_i (B) \rightarrow \mu(B)$. More generally,
\[
\int_\Omega u \cdot d \mu_i \longrightarrow \int_\Omega u \cdot d \mu 
\] for every measurable bounded function $u$ whose discontinuity set has zero $\lambda$--measure.
\end{prop}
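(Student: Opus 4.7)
The plan is to establish the three claims in sequence: first $|\mu| \leq \lambda$, then the convergence $\mu_i(B) \to \mu(B)$ for Borel $B$ with $\lambda(\partial B) = 0$, and finally the convergence of $\int u \cdot d\mu_i$ for bounded measurable $u$ with $\lambda$-negligible discontinuity set. The second will be obtained from the first by sandwiching $\one_B$ between a compact and an open set and approximating by a continuous bump, and the third will follow from the second via a Lusin-type approximation of $u$.

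For $|\mu| \leq \lambda$, I would use the duality formula
\[
|\mu|(U) = \sup \left\{ \int_\Omega \varphi \cdot d\mu \, : \, \varphi \in \xC_c(U,\R^m), \, \|\varphi\|_\infty \leq 1 \right\}
\]
for $U \subset \Omega$ open. For any admissible $\varphi$ one has $\int \varphi \cdot d\mu = \lim_i \int \varphi \cdot d\mu_i$ and $|\int \varphi \cdot d\mu_i| \leq |\mu_i|(\supp \varphi)$. Since $\supp\varphi$ is a compact subset of $U$, the first part of Proposition \ref{weak_cv_borel_sets} applied to $|\mu_i| \stackrel{*}{\rightharpoonup} \lambda$ gives $\limsup_i |\mu_i|(\supp\varphi) \leq \lambda(\supp\varphi) \leq \lambda(U)$. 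Hence $|\mu|(U) \leq \lambda(U)$, and outer regularity extends this to all Borel sets.

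For the second claim, fix $B$ Borel with $\lambda(\partial B) = 0$; the previous step yields $|\mu|(\partial B) = 0$ as well. Given $\epsilon > 0$, I would use inner/outer regularity to choose a compact $K \subset \mathring B$ and a relatively compact open $W \supset \overline B$ with $\lambda(W \setminus K) < \epsilon$, and by a small perturbation (e.g. replacing $W$ by an appropriate sublevel set of the distance to $\overline B$, only countably many of which can have $\lambda$-charged boundaries) I can arrange $\lambda(\partial W) = 0$. Pick $\varphi \in \xC_c(\Omega)$ with $\one_K \leq \varphi \leq \one_W$. Then $|\one_B - \varphi| \leq \one_{W \setminus K}$, and since $\lambda(\partial(W \setminus K)) = 0$ the Borel-set part of Proposition \ref{weak_cv_borel_sets} gives $|\mu_i|(W \setminus K) \to \lambda(W \setminus K) < \epsilon$. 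Together with $\int \varphi \cdot d\mu_i \to \int \varphi \cdot d\mu$, this yields $\mu_i(B) \to \mu(B)$.

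For the third claim, let $u$ be bounded measurable with $M = \|u\|_\infty$ and discontinuity set $D$ satisfying $\lambda(D) = 0$. For $\epsilon > 0$ I would pick an open $U \supset D$ with $\lambda(U) < \epsilon$ and $\lambda(\partial U) = 0$. Since $u$ is continuous on the closed set $\Omega \setminus U$, a Tietze extension combined with a cutoff yields $v \in \xC_c(\Omega)$ with $\|v\|_\infty \leq M$ and $v = u$ on $\Omega \setminus U$. Splitting $u = v + (u - v)$ with $u - v$ supported in $U$ gives
\[
\left| \int u \cdot d\mu_i - \int u \cdot d\mu \right| \leq \left| \int v \cdot d\mu_i - \int v \cdot d\mu \right| + 2M \bigl( |\mu_i|(U) + |\mu|(U) \bigr),
\]
where the first term vanishes and the last two are eventually bounded by $4M\epsilon$ (using $|\mu_i|(U) \to \lambda(U)$ and $|\mu|(U) \leq \lambda(U) < \epsilon$). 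Since $\epsilon$ is arbitrary, the conclusion follows. The main obstacle throughout is the boundary regularization: the sets $W$, $K$, and $U$ must be chosen with $\lambda$-null boundaries so that the one-sided bounds of Proposition \ref{weak_cv_borel_sets} (LSC on opens, USC on compacts) can be upgraded to two-sided convergence of $|\mu_i|$ on the approximating sets; once this is handled the rest reduces to standard density arguments.
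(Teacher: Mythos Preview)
The paper does not supply a proof of this proposition: it is quoted verbatim as a reference result from \cite{ambrosio} (Prop.~1.62(b)) and used as a black box thereafter. So there is no ``paper's own proof'' to compare your attempt against; you have written out the standard argument that the paper chose to omit.

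Your argument is essentially the textbook one and is correct in outline. One small gap in the third step: after Tietze-extending $u|_{\Omega\setminus U}$ and then multiplying by a cutoff to force compact support, the equality $v=u$ survives only on $(\Omega\setminus U)$ intersected with the region where the cutoff equals $1$, not on all of $\Omega\setminus U$. Thus $u-v$ is supported in $U$ \emph{union} the complement of that region, and you need the latter to also have small $\lambda$-measure. This is harmless once you note that for $\int_\Omega u\cdot d\mu_i$ to be defined for merely bounded $u$ one is implicitly assuming $|\mu_i|(\Omega)<\infty$, hence $\lambda(\Omega)<\infty$, so a large enough compact set carries all but $\epsilon$ of the $\lambda$-mass and the cutoff can be chosen accordingly. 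With that adjustment the estimate goes through exactly as you wrote.
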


We end this part with a result saying that, for a given Radon measure $\mu$, among all balls centred at a fixed point, at most a countable number of them have a boundary with non zero $\mu$--measure.

\begin{prop} \label{boundary_weight}
Let $\mu$ be a Radon measure in some open set $\Omega \subset \R^n$. Then,
\begin{enumerate}[(i)]
\item For a given $x \in \Omega$, the set of $r \in \R_+$ such that $\mu (\partial B_r (x)) > 0$ is at most countable. In particular,
\[
\cL^1 \{ r \in \R_+ \: | \: \mu (\partial B_r (x) \cap \Omega) > 0 \} = 0 \: .
\]
\item For almost every $r \in \R_+$,
\[
\mu \left\lbrace x \in \Omega \: | \: \mu (\partial B_r (x) \cap \Omega) > 0 \right\rbrace = 0 \: .
\]
\end{enumerate}
\end{prop}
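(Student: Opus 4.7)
For part (i), the plan is to exploit the pairwise disjointness of the spheres $\{\partial B_r(x)\}_{r>0}$. Since $\Omega$ is $\sigma$-compact and $\mu$ is Radon, I can exhaust $\Omega$ by compact sets $K_j \nearrow \Omega$ with $\mu(K_j) < \infty$. Disjointness then gives $\sum_{r>0} \mu(\partial B_r(x) \cap K_j) \leq \mu(K_j) < \infty$, so at most countably many $r$ contribute for each fixed $j$; taking a countable union over $j$ shows that $\{r > 0 : \mu(\partial B_r(x) \cap \Omega) > 0\}$ is countable, hence of Lebesgue measure zero.

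For part (ii), the strategy is to apply Tonelli's theorem to the indicator of
\[
S := \{(x, r) \in \Omega \times (0, \infty) : \mu(\partial B_r(x) \cap \Omega) > 0\}
\]
with respect to the product measure $\mu \otimes \cL^1$. Part (i) already tells us that every $x$-slice $S_x = \{r : (x,r) \in S\}$ has $\cL^1(S_x) = 0$, so Tonelli would yield
\[
\int_0^\infty \mu(\{x \in \Omega : (x,r) \in S\}) \, dr = \int_\Omega \cL^1(S_x) \, d\mu(x) = 0 \: ,
\]
and the integrand must vanish for $\cL^1$-a.e.\ $r$, which is exactly the conclusion of (ii). The $\sigma$-finiteness needed to justify Tonelli comes for free from the compact exhaustion of $\Omega$ used above.

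The main obstacle I anticipate is the joint Borel measurability of $S$, without which Tonelli cannot be applied. For this I plan to show that $(x, r) \mapsto \mu(B_r(x) \cap \Omega)$ is lower semicontinuous (by inner regularity: any compact $K \subset B_r(x) \cap \Omega$ is contained in $B_{r'}(x') \cap \Omega$ for $(x', r')$ sufficiently close to $(x, r)$, so the function is a supremum of lsc functions), and that $(x, r) \mapsto \mu(\bar B_r(x) \cap \Omega)$ is upper semicontinuous (by downward continuity of $\mu$ along the decreasing family $\bar B_{r+\varepsilon}(x) \cap \Omega$ as $\varepsilon \downarrow 0$, which relies on local finiteness of $\mu$). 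Their difference is the function $(x,r) \mapsto \mu(\partial B_r(x) \cap \Omega)$, which is therefore Borel, making $S$ Borel and enabling the Tonelli step above.
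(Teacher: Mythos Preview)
Your argument is correct and follows essentially the same route as the paper: part~(ii) is handled by Fubini/Tonelli applied to the indicator of $S$, reducing to part~(i) on each $x$--slice. The paper simply invokes Fubini without discussing measurability of $S$, so your treatment is in fact more complete on that point. For part~(i), the paper instead observes that $r \mapsto \mu(B_r(x))$ is monotone and hence has at most countably many discontinuities; your disjoint-spheres-plus-compact-exhaustion argument is an equally standard alternative.

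One small caution on your measurability sketch: the claim that $(x,r)\mapsto \mu(\bar B_r(x)\cap\Omega)$ is upper semicontinuous is not quite true in general. Downward continuity of $\mu$ along $\bar B_{r+\varepsilon}(x)\cap\Omega$ requires $\mu(\bar B_{r+\varepsilon_0}(x)\cap\Omega)<\infty$ for some $\varepsilon_0>0$, and ``local finiteness'' of a Radon measure on $\Omega$ only guarantees finiteness on compact subsets of $\Omega$, not on sets of the form $\bar B_{r+\varepsilon}(x)\cap\Omega$ when this set is not relatively compact in $\Omega$. (For instance, with $\Omega=\R^2\setminus\{0\}$ and $\mu=\sum_k k\,\delta_{(1/k,0)}$, take $x=(-1,0)$, $r=1$.) The easy fix is the one you already have in hand: work with the finite measures $\mu|_{K_j}$ from your compact exhaustion, for which the usc claim is genuinely valid, so each $(x,r)\mapsto \mu|_{K_j}(\partial B_r(x))$ is Borel, and then $\mu(\partial B_r(x)\cap\Omega)=\sup_j \mu|_{K_j}(\partial B_r(x))$ is Borel as well.
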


\begin{proof}
The first point is a classical property of Radon measures and comes from the fact that monotone functions have at most a countable set of discontinuities, applied to $r \mapsto \mu (B_r(x))$.
For the second point, we use Fubini Theorem to get
\begin{align*}
\int_{r \in \R_+} \mu \left\lbrace x \in \Omega \: | \: \mu (\partial B_r (x) \cap \Omega) > 0 \right\rbrace \, dr & = \int_{x \in \Omega} \int_{r \in \R_+} \mathds{1}_{ \{ (x,r) \: | \: \mu (\partial B_r (x) \cap \Omega) > 0 \} } (x,r) \, d \mu(x) \, dr \\
& = \int_{x \in \Omega} \cL^1 \{ r \in \R_+ \: | \: \mu (\partial B_r (x) \cap \Omega) > 0 \} \, d \mu(x) =0 \: ,
\end{align*}
thanks to $(i)$.
\end{proof}

\noindent These basic results will be widely used throughout this paper.

\subsection{Rectifiability and approximate tangent space}

\begin{dfn}[$d$--rectifiable sets, see definition $2.57$ p.$80$ in \cite{ambrosio}]
Let $M \subset \R^n$. $M$ is said to be \emph{countably $d$--rectifiable} if there exist countably many Lipschitz functions $f_i : \R^d \rightarrow \R^n$ such that
\[
M \subset M_0 \cup \bigcup_{i \in \N} f_i (\R^d) \text{ with } \cH^d(M_0) = 0 \: .
\]
If in addition $\cH^d (M) < +\infty$ then $M$ is said \emph{$d$--rectifiable}.
\end{dfn}
\noindent Actually, it is equivalent to require that $M$ can be covered by countably many Lipschitz $d$--graphs up to a $\cH^d$--negligible set and thanks to Whitney extension theorem, one can ask for $\xC^1$ $d$--graphs. We can now define rectifiability for measures.

\begin{dfn}[$d$--rectifiable measures, see definition $2.59$ p.$81$ in \cite{ambrosio}]
Let $\mu$ be a positive Radon measure in $\R^n$. We say that $\mu$ is $d$--rectifiable if there exist a countably $d$--rectifiable set $M$ and a Borel positive function $\theta$ such that $\mu = \theta \cH^d_{| M}$.
\end{dfn}

\noindent Thus, a set $M$ is countably $d$--rectifiable if and only if $\cH^d_{| M}$ is a $d$--rectifiable measure. When blowing up at a point, rectifiable measures have the property of concentrating on affine planes (at almost any point). This property leads to a characterization of rectifiable measures. Let us define $\psi_{x,r}$ as
\[
\psi_{x,r} (y) = \frac{y-x}{r} \: .
\]

\begin{dfn}[Approximate tangent space to a measure, see definition $2.79$ p.$92$ in \cite{ambrosio}]
Let $\mu$ be a positive Radon measure in $\R^n$. We say that $\mu$ has an \emph{approximate tangent space} $P$ with multiplicity $\theta \in \R_+$ at $x$ if $P \in \G$ is a $d$--plane such that
\[
\frac{1}{r^d} {\psi_{x,r}}_\# \mu \: \xrightharpoonup[]{\: \: \ast \: \:} \: \theta \cH^d_{| P} \text{ as } r \downarrow 0.
\]
That is,
\[
\frac{1}{r^d} \int \varphi \left(\frac{y-x}{r}\right) \, d \mu(y) \xrightarrow[r \downarrow 0]{} \theta \int_P \varphi(y) \, d \cH^d(y) \quad \forall \varphi \in \xC_c (\R^n) \: .
\]
\end{dfn}

\noindent In the sequel the approximate tangent plane to $M$ (resp. $\mu$) at $x$ is denoted by $T_x M$ (resp. $T_x \mu$). As we said, this provides a way to characterize rectifiability:

\begin{theo}[see theorem $2.83$ p.$94$ in \cite{ambrosio}]
Let $\mu$ be a positive Radon measure in $\R^n$.
\begin{enumerate}
\item If $\mu = \theta \cH^d_{| M}$ with $M$ countably $d$--rectifiable, then $\mu$ admits an approximate tangent plane with multiplicity $\theta (x)$ for $\cH^d$--almost any $x \in M$.
\item If there exists a Borel set $S$ such that $\mu(\R^n \setminus S)=0$ and if $\mu$ admits an approximate tangent plane with multiplicity $\theta (x) > 0$ for $\cH^d$--almost every $x \in S$ then $S$ is countably $d$--rectifiable and $\mu = \theta \cH^d_{| S}$.
\end{enumerate}
\end{theo}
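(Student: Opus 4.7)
The plan is to treat the two implications separately, both reducing to the local $C^1$ picture.

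For part (1), I would first replace the Lipschitz covering of $M$ by a countable $C^1$ one: Whitney's extension theorem lets us cover $M$, up to an $\cH^d$-null set, by countably many $d$-dimensional $C^1$ submanifolds $\Gamma_i$; disjointifying via $M_i = (M\cap\Gamma_i)\setminus\bigcup_{j<i}\Gamma_j$ yields a Borel partition of $M$. At $\cH^d$-a.e. $x\in M_i$ three facts simultaneously hold: (a) the classical tangent $T_x\Gamma_i$ exists; (b) $x$ is an $\cH^d$-density point of $M_i$ in $M$, so $r^{-d}\cH^d(B_r(x)\cap(M\setminus M_i))\to 0$; (c) $\theta$ is approximately continuous at $x$. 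A blow-up computation using a local $C^1$ parametrization of $\Gamma_i$ over $T_x\Gamma_i$ and the area formula then gives, for every $\varphi\in\xC_c(\R^n)$,
\[
\frac{1}{r^d}\int\varphi\Big(\frac{y-x}{r}\Big)\,d\mu(y)\xrightarrow[r\downarrow 0]{}\theta(x)\int_{T_x\Gamma_i}\varphi\,d\cH^d,
\]
property (b) absorbing contributions from $M\setminus M_i$ and (c) freezing the multiplicity.

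For part (2), the approximate tangent hypothesis at $x$ directly yields two-sided $d$-density bounds $0<\theta^d_*(\mu,x)\le\theta^{*d}(\mu,x)<\infty$ comparable to $\theta(x)\omega_d$, at $\cH^d$-a.e. $x\in S$. Standard Besicovitch differentiation then gives $\mu\ll\cH^d$ and $\mu=\theta\,\cH^d_{|S'}$ for a Borel subset $S'\subseteq S$ carrying $\mu$. To obtain rectifiability of $S'$, I would fix a countable dense family $\{P_k\}\subset\G$ and, for each $k$ and each $\epsilon>0$, introduce the good set
\[
S_{k,\epsilon}=\{x\in S': T_x\mu\text{ is within angle }\epsilon\text{ of }P_k\}.
\]
Combining the weak-$\ast$ concentration of the rescaled $\mu$ on $T_x\mu$ with the positive lower density $\theta(x)>0$ forces, on $S_{k,\epsilon}$, the support $\supp\mu\cap B_{r(x)}(x)$ to lie in a narrow cone around $x+P_k$ at some scale $r(x)>0$. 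Restricting to $\{r(x)\ge\eta\}$ yields a subset on which a classical cone/graph argument covers the points by a Lipschitz graph over $P_k$; countable union over $k,\epsilon,\eta$ then establishes countable rectifiability of $S'$, and $\mu=\theta\cH^d_{|S}$ follows since $\mu(S\setminus S')=0$.

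The real obstacle is the converse direction (2): the tangent hypothesis is purely infinitesimal and measure-theoretic (weak-$\ast$ convergence of rescaled measures), giving no direct set-theoretic information about $\supp\mu$. One must use the positive lower density $\theta(x)>0$ to upgrade ``mass concentrates near a plane'' into ``points of $S'$ lie near a plane'', and then uniformize the scale $r(x)$ on good subsets to reach a Lipschitz graph. Here one typically invokes the Marstrand--Mattila rectifiability criterion (or Preiss-style tangent-measure machinery); the cone/graph route sketched above is the hands-on alternative.
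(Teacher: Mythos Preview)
The paper does not give its own proof of this statement: it is quoted verbatim as a background result from \cite{ambrosio} (Theorem~2.83) and used as a black box in the preliminaries. There is therefore no ``paper's proof'' to compare your proposal against.

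That said, your outline is the standard one and is essentially what one finds in \cite{ambrosio} or Mattila. Part~(1) is fine as sketched. In part~(2) your density argument is correct (the limit measure $\theta(x)\cH^d_{|P}$ gives zero mass to $\partial B_1(0)$, so weak--$\ast$ convergence does yield $\mu(B_r(x))/(\omega_d r^d)\to\theta(x)$), but the step ``Besicovitch differentiation then gives $\mu=\theta\,\cH^d_{|S'}$'' hides work: you get $\mu\ll\cH^d_{|S'}$ from the upper density bound, but identifying the Radon--Nikodym derivative with $\theta$ requires comparing $\mu(B_r(x))$ to $\cH^d(S'\cap B_r(x))$, which needs the rectifiability of $S'$ (or at least that $\cH^d_{|S'}$ has density~$1$ a.e.) --- so this identification should come \emph{after} rectifiability, not before. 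In the cone/graph step, your phrasing ``forces $\supp\mu\cap B_{r(x)}(x)$ to lie in a narrow cone'' is slightly off: what the positive lower density at nearby points $y\in S'$ actually gives is that \emph{those points $y$} (not all of $\supp\mu$) must lie in the cone, since a $y$ outside the cone would carry mass $\gtrsim\theta(y)\rho^d$ in a small ball $B_\rho(y)$ far from $x+T_x\mu$, contradicting the blow-up at $x$. With these two clarifications the argument goes through; alternatively one invokes the Marstrand--Mattila criterion directly, as you note.
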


There are other characterizations of rectifiability in terms of density (see for instance \cite{Mattila19951}). Let us point out an easy consequence of the existence of a tangent plane at a given point:

\begin{prop}\label{approximate_plane_consequence}
Let $\mu$ be a positive Radon measure in $\R^n$. Let $x \in \R^n$, $P \in \G$ and assume that $\mu$ has an approximate tangent space $T_x \mu$ with multiplicity $\theta(x)>0$ at $x$. Then for all $\beta >0$,
\[
\frac{1}{r^d} \mu \left\lbrace y \in B_r (x) \, | \, d(y-x,P) < \beta r  \right\rbrace \xrightarrow[r \to 0]{} \theta(x) \cH^d \left\lbrace y \in T_x \mu \cap B_1 (0)  \, | \, d(y,P) < \beta \right\rbrace \: .
\]
\end{prop}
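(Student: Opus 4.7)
The plan is to reduce the statement to a direct application of the characterization of weak--$\ast$ convergence on Borel sets (Proposition \ref{weak_cv_borel_sets}(2)), applied to the rescaled measures that define the approximate tangent space.

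First, I would rewrite the left-hand side via a change of variables. Setting $z = \psi_{x,r}(y) = (y-x)/r$, the set $\{y \in B_r(x) \, | \, d(y-x,P) < \beta r\}$ is the preimage under $\psi_{x,r}$ of $A := \{z \in B_1(0) \, | \, d(z,P) < \beta\}$, so
\[
\frac{1}{r^d}\,\mu\left\{ y \in B_r(x) \, | \, d(y-x,P) < \beta r \right\} \;=\; \frac{1}{r^d}\,\bigl({\psi_{x,r}}_\# \mu\bigr)(A).
\]
By definition of the approximate tangent space, the measures $\tfrac{1}{r^d}{\psi_{x,r}}_\#\mu$ converge weakly--$\ast$ to $\theta(x)\cH^d_{|T_x\mu}$ as $r \downarrow 0$. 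Since the target set $A$ is an open bounded Borel set in $\R^n$, Proposition \ref{weak_cv_borel_sets}(2), applied along any sequence $r_i \downarrow 0$, yields the desired limit provided that $\cH^d_{|T_x\mu}(\partial A)=0$. The continuous limit as $r\downarrow 0$ then follows by arbitrariness of the sequence.

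The main step is therefore to check the null-boundary condition. Since $A$ is open, $\partial A \subset \partial B_1(0) \cup \{z \in \overline{B_1(0)} \, | \, d(z,P) = \beta\}$, so I would bound $\cH^d(T_x\mu \cap \partial A)$ by the sum of $\cH^d(T_x\mu \cap \partial B_1(0))$ and $\cH^d(T_x\mu \cap \{d(\cdot,P) = \beta\})$. The first term vanishes because $T_x\mu \cap \partial B_1(0)$ is a $(d-1)$-dimensional sphere. For the second term, I would split into two cases using the linear map $\Pi_{P^\perp}$ restricted to $T_x\mu$, since $d(z,P) = |\Pi_{P^\perp} z|$. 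If $T_x\mu \subset P$, the set $\{z \in T_x\mu \, | \, |\Pi_{P^\perp}z| = \beta\}$ is empty for $\beta>0$. Otherwise $\Pi_{P^\perp}|_{T_x\mu}$ has rank $k \geq 1$, and writing $T_x\mu$ as the orthogonal sum of $\ker(\Pi_{P^\perp}|_{T_x\mu})$ (of dimension $d-k$) and its orthogonal complement in $T_x\mu$, the level set $\{|\Pi_{P^\perp}z|=\beta\}$ is the product of a $(k-1)$-sphere with a $(d-k)$-plane, i.e., a $(d-1)$-dimensional submanifold of $T_x\mu$, hence $\cH^d$-negligible.

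The main (and only) obstacle is the bookkeeping for this boundary analysis; once it is in place, the conclusion is a direct invocation of the Portmanteau-type statement in Proposition \ref{weak_cv_borel_sets}(2) together with the very definition of approximate tangent space.
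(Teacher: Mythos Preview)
Your proposal is correct and follows exactly the same route as the paper: rewrite the quantity as $\tfrac{1}{r^d}({\psi_{x,r}}_\#\mu)(A)$ for $A=\{z\in B_1(0)\,|\,d(z,P)<\beta\}$ and invoke Proposition~\ref{weak_cv_borel_sets}(2) via the weak--$\ast$ convergence defining the approximate tangent space. The paper's proof only asserts the null-boundary condition without justification, whereas you actually supply the verification that $\cH^d(T_x\mu\cap\partial A)=0$; your case analysis on the rank of $\Pi_{P^\perp}|_{T_x\mu}$ is correct and makes the argument complete.
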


\begin{proof}
Indeed, let $\psi_{x,r} : y \mapsto \frac{y-x}{r}$, then $\frac{1}{r^d} {\psi_{x,r}}_\# \mu$ weakly star converges to $\theta (x) \cH^d_{| _x \mu}$ so that for any Borel set $A$ such that $\cH^d_{| T_x \mu}(\partial A) = \cH^d (\partial A \cap T_x \mu) = 0$, we have
\begin{equation} \label{def_plan}
\frac{1}{r^d} {\psi_{x,r}}_\# \mu (A) = \frac{1}{r^d} \mu \left( \psi_{x,r}^{-1} (A) \right) \xrightarrow[r \to 0_+]{}  \theta (x) \cH^d \left( T_x \mu \cap A \right) \: .
\end{equation}
The conclusion follows applying (\ref{def_plan}) with $A = \left\lbrace y \in B_1(0) \, | \, d(y,P) < \beta \right\rbrace$ so that for any $0 < \beta < 1$, 
\[
\psi_{x,r}^{-1} (A) = \left\lbrace y \in B_r (x) \, | \, d(y-x,P) > \beta r  \right\rbrace \text{ and } \cH^d (A \cap P) = 0 \: .
\]
\end{proof}

\subsection{Some facts about varifolds}

We recall here a few facts about varifolds, (for more details, see for instance \cite{MR756417}). As we have already mentioned, the space of varifolds can be seen as a space of generalized surfaces. However, in this part we give examples showing that, not only rectifiable sets, but also objects like point clouds or volumetric approximations can be endowed with a varifold structure. Then we define the first variation of a varifold which is a generalized notion of mean curvature, and we recall the link between the boundedness of the first variation and the rectifiability of a varifold. We also introduce a family of volumetric discretizations endowed with a varifold structure. They will appear all along this paper in order to illustrate problems and strategies to solve them. We focus on this particular family of varifolds because they correspond to the volumetric approximations of sets that motivated us initially.

\subsubsection{Definition of varifolds}

We recall that $\G = \left\lbrace P \subset \R^n \, | \, P \text{ is a vector subspace of dimension } d \right\rbrace$. Let us begin with the notion of rectifiable $d$--varifold.

\begin{dfn}[Rectifiable $d$--varifold]
Given an open set $\Omega \subset \R^n$, let $M$ be a countably $d$--rectifiable set and $\theta$ be a non negative function with $\theta > 0$ $\cH^d$--almost everywhere in $M$. A rectifiable $d$--varifold $V= v(M,\theta)$ in $\Omega$ is a positive Radon measure on $\Omega \times G_{d,n}$ of the form $V= \theta \mathcal{H}^d_{| M} \otimes \delta_{T_x M}$ i.e.
\[
\int_{\Omega \times G_{d,n}} \varphi (x,T) \, dV(x,T) = \int_M \varphi (x, T_x M) \, \theta(x) \, d \mathcal{H}^d (x) \quad \forall \varphi \in \xC_c (\Omega \times G_{d,n} , \mathbb{R})
\] where $T_x M$ is the approximative tangent space at $x$ which exists $\mathcal{H}^d$--almost everywhere in $M$. The function $\theta$ is called the \emph{multiplicity} of the rectifiable varifold.
\end{dfn}
\begin{remk}
We are dealing with measures on $\Omega \times G_{d,n}$, but we did not mention the $\sigma$--algebra we consider. We can equip $G_{d,n}$ with the metric
\[
d(T,P) = \Vert \Pi_T - \Pi_P \Vert
\] with $\Pi_T \in M_n (\R)$ being the matrix of the orthogonal projection onto $T$ and $\Vert \cdot \Vert$ a norm on $M_n (\R)$. We consider measures on $\Omega \times G_{d,n}$ with respect to the Borel algebra on $\Omega \times G_{d,n}$.
\end{remk}
\noindent Let us turn to the general notion of varifold:
\begin{dfn}[Varifold]
Let $\Omega \subset \R^n$ be an open set. A $d$--varifold in $\Omega$ is a positive Radon measure on $\Omega \times G_{d,n}$.
\end{dfn}
\begin{remk}
As $\Omega \times G_{d,n}$ is locally compact, Riesz Theorem allows to identify Radon measures on $\Omega \times G_{d,n}$ and continuous linear forms on $\xC_c (\Omega \times G_{d,n})$ (we used this fact in the definition of rectifiable $d$--varifolds) and the convergence in the sense of varifolds is then the weak--$\ast$ convergence.
\end{remk}
\begin{dfn}[Convergence of varifolds]
A sequence of $d$--varifolds $(V_i)_i$ weakly--$\ast$ converges to a $d$--varifolds $V$ in $\Omega$ if, for all $\varphi \in \xC_c (\Omega \times G_{d,n})$,
\[
\int_{\Omega \times \G} \phi (x,P) \, dV_i (x,P) \xrightarrow[i \to \infty]{} \int_{\Omega \times \G} \phi(x,P) \, dV(x,P) \: .
\]
\end{dfn}
We now give some examples of varifolds:
\begin{xmpl}
\label{droite}
Consider a straight line $D \subset \mathbb{R}^3$, then the measure $v(D) = \mathcal{H}^1_{|D} \otimes \delta_D$ is the canonical $1$--varifold associated to $D$.
\end{xmpl}
\begin{xmpl} \label{ligne_polygonale}
Consider a polygonal curve $M \subset \mathbb{R}^2$ consisting of $8$ line segments $S_1, \ldots , S_8$ of directions $P_1 , \ldots , P_8 \in G_{1,2}$, then the measure $v(M) = \sum_{i=1}^8 \mathcal{H}^1_{|S_i} \otimes \delta_{P_i}$ is the canonical varifold associated to $M$. 
\end{xmpl}
\setcounter{subfigure}{0}
\begin{figure}[!htp]
\subfigure[Polygonal curve]{\includegraphics[width=0.45\textwidth]{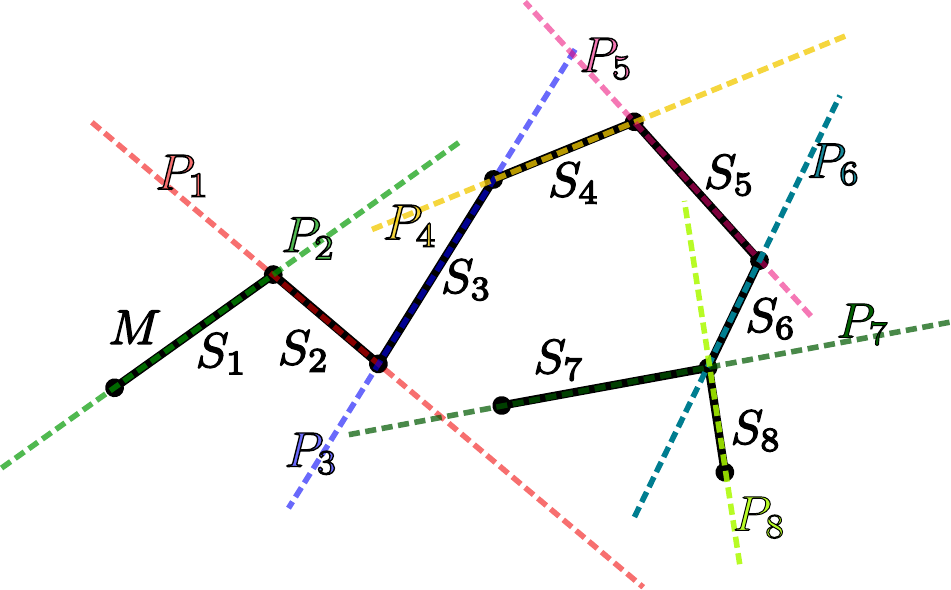}}
\subfigure[Point cloud]{\includegraphics[width=0.45\textwidth]{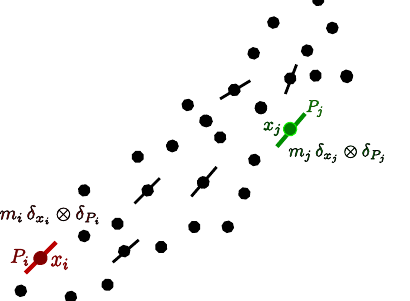}}
\end{figure}
\begin{xmpl}
Consider a $d$--submanifold $M \subset \R^n$. According to the definition of rectifiable $d$--varifolds, the canonical $d$--varifold associated to $M$ is $v(M) = \cH^d \otimes \delta_{T_x M}$ or $v(M,\theta) = \theta \cH^d \otimes \delta_{T_x M} $ adding some multiplicity $\theta : M \rightarrow \R_+$.
\end{xmpl}
\begin{xmpl}[Point cloud]
Consider a finite set of points $\{ x_j \}_{j = 1}^N \subset \R^n$ with additional information of masses $\{ m_j \}_{j = 1}^N  \subset \R_+$ and tangent planes $\{ P_j \}_{j = 1 \ldots N} \subset \G$ then the measure
\[
\sum_{j=1}^N m_j \delta_{x_j} \otimes \delta_{P_j} 
\]
defines a $d$--varifolds associated with the point cloud.
\end{xmpl}

\begin{dfn}[Mass]
If $V=v(M,\theta)$ is a $d$--rectifiable varifold, the measure $\theta \mathcal{H}^d_{| M}$ is called the \emph{mass} of $V$ and denoted by $\V$. For a general varifold $V$, the mass of $V$ is the positive Radon measure defined by $\Vert V \Vert (B) = V (\pi^{-1} (B))$ for every $B \subset \Omega$ Borel, with \[ \left\lbrace \begin{array}{lcll}
\pi : & \Omega \times G_{d,n} & \rightarrow & \Omega \\
& (x,S) & \mapsto & x
\end{array} \right. \]
\end{dfn}
\noindent For a curve, the mass is the length measure, for a surface, it is the area measure, for the previous point cloud, the mass is $\sum_j m_j \delta_{x_j}$. The mass loses the tangent information and keeps only the spatial part.

\subsubsection{First variation of a varifold}

The set of $d$--varifolds is endowed with a notion of generalized curvature called \emph{first variation}. Let us recall the divergence theorem on a submanifold:
\begin{theo}[Divergence theorem]
Let $\Omega \subset \R^n$ be an open set and let $M \subset \R^n$ be a $d$--dimensional $\xC^2$-- submanifold. Then, for all $X \in \xC_c^1 (\Omega , \R^n )$,
\[
\int_{ \Omega \cap M } {\rm{div}}_{T_x M} X(x) \, d\cH^d(x) = - \int_{\Omega \cap M} H(x) \cdot X(x) \, d\cH^d(x) \: ,
\]
where $H$ is the mean curvature vector.
\end{theo}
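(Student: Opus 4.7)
The plan is to reduce to a local computation via a partition of unity and then split $X$ into tangential and normal components. Since $\supp X$ is compact in $\Omega$, I can cover $M \cap \supp X$ by finitely many coordinate charts $\Phi_\alpha : V_\alpha \subset \R^d \to U_\alpha \cap M$ with $\Phi_\alpha$ a $\xC^2$ parametrization, and pick a $\xC^1$ partition of unity $(\chi_\alpha)$ subordinate to $(U_\alpha)$. By linearity in $X$ it suffices to prove the identity for each $\chi_\alpha X$, so I may assume $X$ is supported in a single chart $U$ on which a $\xC^1$ orthonormal tangent frame $\tau_1, \ldots, \tau_d$ of $TM$ exists.

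At every $x \in M \cap U$ decompose $X(x) = X^\top(x) + X^\perp(x)$ with $X^\top(x) \in T_x M$ and $X^\perp(x) \in (T_x M)^\perp$; this decomposition is $\xC^1$ in $x$ because $M$ is $\xC^2$, and $\xdiv_{T_x M} X = \xdiv_{T_x M} X^\top + \xdiv_{T_x M} X^\perp$. For the tangential piece, $X^\top$ is a compactly supported $\xC^1$ vector field on the Riemannian manifold $M$, and the classical divergence theorem on manifolds, obtained by pulling back via $\Phi_\alpha$ and using the identity $\xdiv_M X^\top \sqrt{\det g} = \partial_i(\sqrt{\det g}\,(X^\top)^i)$ together with ordinary integration by parts on $V_\alpha$ (boundary terms vanish by compact support), yields
\[
\int_{\Omega \cap M} \xdiv_{T_x M} X^\top(x) \, d\cH^d(x) = 0 \: .
\]

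For the normal piece I use the frame $\tau_1, \ldots, \tau_d$ and the orthogonality $\tau_i \cdot X^\perp \equiv 0$ to write
\[
\xdiv_{T_x M} X^\perp = \sum_{i=1}^d \tau_i \cdot \partial_{\tau_i} X^\perp = \sum_{i=1}^d \bigl[ \partial_{\tau_i}(\tau_i \cdot X^\perp) - X^\perp \cdot \partial_{\tau_i} \tau_i \bigr] = - X^\perp \cdot H(x) \: ,
\]
where $H(x) = \sum_{i=1}^d (\partial_{\tau_i} \tau_i)^\perp$ is the mean curvature vector. Since $H$ is normal, $X \cdot H = X^\perp \cdot H$, and adding the two contributions yields the stated identity.

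The main obstacle is the normal-part computation: one must check that $H := \sum_i (\partial_{\tau_i} \tau_i)^\perp$ is genuinely independent of the chosen orthonormal frame (so that the local identity patches into a global one) and that the trace computation matches the intrinsic definition of the mean curvature through the second fundamental form $\mathrm{II}(\tau_i,\tau_i)$. Once this is in place, the tangential step is routine Stokes for a compactly supported section of $TM$ on a manifold without boundary inside $\Omega$, and the partition of unity reduction is standard.
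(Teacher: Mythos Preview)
Your argument is correct and is in fact the standard proof of this classical identity. However, the paper does not prove this theorem at all: it is merely \emph{recalled} as a known result (the paragraph introducing it reads ``Let us recall the divergence theorem on a submanifold'') in order to motivate the definition of the first variation of a varifold. So there is nothing to compare against; you have supplied a proof where the paper simply cites the statement.
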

\noindent For $P \in G$ and $X =(X_1, \ldots , X_n) \in \xC_c^1 (\Omega , \R^n )$, the operator ${\rm{div}}_P$ is defined as
\[
{\rm{div}}_P (x) = \sum_{j=1}^n \langle \nabla^P X_j (x) , e_j \rangle = \sum_{j=1}^n \langle \Pi_P (\nabla X_j (x)) , e_j \rangle \text{ whith } (e_1, \ldots, e_n) \text{ canonical basis of } \R^n .
\]
This variational approach is actually a way to define mean curvature that can be extended to a larger class than $C^2$--manifolds: the class of varifolds with bounded first variation. We can now define the first variation of a varifold. 
\begin{dfn}[First variation of a varifold] The first variation of a $d$--varifold in $\Omega \subset \R^n$ is the linear functional
\[
\begin{array}{lcll}
\delta V : & {\xC}_c^1 (\Omega , \R^n )& \rightarrow & \R \\
& X & \mapsto & \int_{\Omega \times G_{d,n}} {\xdiv}_P X (x) \, d V (x,P)
\end{array}
\]
\end{dfn}
\noindent This linear functional is generally not continuous with respect to the $\xC_c^0$ topology. When it is true, we say that the varifold has \emph{locally bounded first variation}:
\begin{dfn}
We say that a $d$--varifold on $\Omega$ has locally bounded first variation when the linear form $\delta V$ is continuous that is to say, for every compact set $K \subset \Omega$ there is a constant $c_K$ such that for every $X \in \xC_c^1 (\Omega , \R^n)$ with $\supp X \subset K$,
\[
| \delta V (X) | \leq c_K \sup_K |X| \: .
\] 
\end{dfn}
Now, if a $d$--varifold $V$ has locally bounded first variation, the linear form $\delta V$ can be extended into a continuous linear form on $\xC_c (\Omega , \R^n )$ and then by Riesz Theorem, there exists a Radon measure on $\Omega$ (still denoted $\delta V$) such that
\[
\delta V (X) = \int_\Omega X \cdot \delta V \quad \text{for every } X \in \xC_c(\Omega,\mathbb{R}^n)
\]
Thanks to Radon-Nikodym Theorem, we can derive $\delta V$ with respect to $\| V \|$ and there exist a function $H \in \left( \xL^1_{loc}(\Omega, \| V \|) \right)^n$ and a measure $\delta V_s$ singular to $\| V \|$ such that
\[
\delta V = - H \|V\| + \delta V_s \: .
\]
The function $H$ is called the generalized mean curvature vector. Thanks to the divergence theorem, it properly extends the classical notion of mean curvature for a $\xC^2$ submanifold.

\subsubsection{Another example: a family of volumetric approximations endowed with a varifold structure} \label{section_varifolds_discrets}

Let us explain what we mean by volumetric approximation. Given a $d$--rectifiable set $M \subset \R^n$ (a curve, a surface...) and a mesh $\cK$, we can define for any cell $K \in \cK$, a mass $m_K$ (the length of the piece of curve in the cell, the area of the piece of surface in the cell) and a mean tangent plane $P_K$ as
\[
m_K = \cH^d (M \cap K) \text{ and } P_K \in \argmin_{S \in \G} \int_{M \cap K} \left| T_x M - S \right|^2 \, d \cH^d (x) \: ,
\]
and similarly, given a rectifiable $d$--varifold $V$, defining
\[
m_K = \V ( K) \text{ and } P_K \in \argmin_{S \in \G} \int_{K \times \G} \left| P - S \right|^2 \, dV(x,P) \: ,
\]
gives what we call a volumetric approximation of  $V$. We now introduce the family of varifolds of this form:

\noindent \begin{minipage}{0.55\textwidth}
\begin{xmpl}\label{diffuse_discrete_varifolds}
Consider a mesh $\cK$ and a family $\{m_K ,P_K  \}_{K \in \cK} \subset \R_+ \times \G$. We can associate the diffuse $d$--varifold:
\[
V = \sum_{K \textrm{cell}} \frac{m_K}{|K|} \cL^n_{| K} \otimes \delta_{P_K} \text{ with } |K| = \cL^n (K) \: .
\]
This $d$--varifold is not rectifiable since its support is $n$--rectifiable but not $d$--rectifiable. We will refer to the set of $d$--varifolds of this special form as \emph{discrete varifolds}.
\end{xmpl}
\end{minipage}
\begin{minipage}{0.45\textwidth} 
\includegraphics[width=0.90\textwidth]{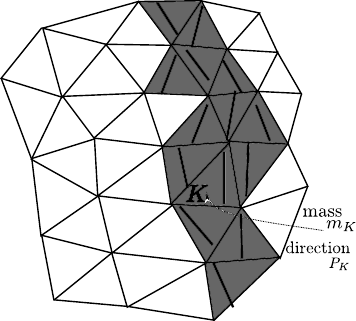}
\end{minipage}
Let us now compute the first variation of such a varifold:
\begin{prop} \label{computation_first_variation_discrete_varifold}
Let $\cK$ be a mesh of $\R^n$ and denote $\cE$ the set of faces of $\cK$. For $K_+, \, K_- \in \cK$, we denote by $\sigma = K_+ | K_- \in \cE$ the common face to $K_+$ and $K_-$, and $n_{K_+, \sigma}$ is then the outer-pointing normal to the face $\sigma$ (pointing outside $K_+$). Decompose the set of faces into $\cE = \cE_{int} \cup \cE_b \cup \cE$ where
\begin{itemize}
\item $\cE_{int}$ is the set of faces $\sigma = K_+ | K_-$ such that $m_{K_+}$, $m_{K_-} > 0$, called \emph{internal faces},
\item $\cE_0$ is the set of faces $\sigma = K_+ | K_-$ such that $m_{K_+}$, $m_{K_-} = 0$,
\item $\cE_b$ is the set of remaining faces $\sigma = K_+ | K_-$ such that $m_{K_+} > 0$ and $m_{K_-} = 0$ or conversely $m_{K_+} = 0$ and $m_{K_-} > 0$, called \emph{boundary faces}. In this case, $\sigma$ is denoted by $K_+ | \cdot$ with $m_{K_+} >0$.
\end{itemize}
For $\{m_K ,P_K  \}_{K \in \cK} \subset \R_+ \times \G$, let us define the $d$--varifold
\[
V_\cK = \sum_{K \in \cK} \frac{m_K}{|K|} \cL^n_{| K} \otimes \delta_{P_K} \: .
\]
Then,
\[
| \delta V_\cK |= \sum_{\substack{\sigma \in \cE_{int}, \\ \sigma = K_- | K_+}} \left| \left[ \frac{m_{K_+}}{|K_+|} \Pi_{P_{K_+}}  - \frac{m_{K_-}}{|K_-|} \Pi_{P_{K_-}} \right] (n_{K_+,\sigma} ) \right| \,  \cH^{n-1}_{| \sigma} + \sum_{\substack{\sigma \in \cE_b, \\ \sigma=K | \cdot}} \frac{m_{K}}{|K|} \left| \Pi_{P_{K}} n_{K,\sigma} \right| \, \cH^{n-1}_{\sigma} \: ,
\]
where $\Pi_P$ is the orthogonal projection onto the $d$-plane $P$.
\end{prop}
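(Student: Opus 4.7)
The plan is to compute $\delta V_\cK$ by unrolling the definition cell by cell, applying the classical divergence theorem on each cell (which works because $P_K$ is constant on $K$), and then reorganizing the boundary contributions face by face.

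First I would observe that for any $P \in \G$ and any $\xC^1$ vector field $X$, the tangential divergence can be rewritten as an ordinary divergence when $P$ is locally constant: since $\Pi_P$ is symmetric and constant,
\[
\xdiv_P X = \sum_{j,k} (\Pi_P)_{jk} \partial_k X_j = \sum_k \partial_k (\Pi_P X)_k = \xdiv (\Pi_P X).
\]
Plugging this into the definition of $\delta V_\cK$ yields
\[
\delta V_\cK (X) = \sum_{K \in \cK} \frac{m_K}{|K|} \int_K \xdiv (\Pi_{P_K} X) \, d\cL^n.
\]
Applying the standard divergence theorem on each cell $K$ with outer unit normal $n_K$ gives
\[
\delta V_\cK (X) = \sum_{K \in \cK} \frac{m_K}{|K|} \int_{\partial K} X \cdot \Pi_{P_K} n_K \, d\cH^{n-1},
\]
where I used that $\Pi_{P_K}$ is symmetric to move it onto $n_K$.

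Next I would reindex by faces. Each face $\sigma$ of the mesh is shared by two cells $K_+, K_-$ (possibly with zero mass) whose outer normals on $\sigma$ satisfy $n_{K_-,\sigma} = - n_{K_+,\sigma}$. Grouping the two boundary terms associated with $\sigma$:
\begin{itemize}
\item if $\sigma \in \cE_{int}$, the contribution is $\int_\sigma X \cdot \bigl[\tfrac{m_{K_+}}{|K_+|}\Pi_{P_{K_+}} - \tfrac{m_{K_-}}{|K_-|}\Pi_{P_{K_-}}\bigr] n_{K_+,\sigma}\, d\cH^{n-1}$;
\item if $\sigma = K \mid \cdot \in \cE_b$, only the cell $K$ with $m_K>0$ contributes, giving $\int_\sigma X \cdot \tfrac{m_K}{|K|} \Pi_{P_K} n_{K,\sigma}\, d\cH^{n-1}$;
\item if $\sigma \in \cE_0$, both coefficients vanish, so the contribution is $0$.
\end{itemize}
This identifies $\delta V_\cK$ as an $\R^n$-valued Radon measure absolutely continuous with respect to $\sum_{\sigma} \cH^{n-1}_{|\sigma}$, with explicit vector densities on each face.

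Finally I would take the total variation. Since the faces $\sigma$ are pairwise $\cH^{n-1}$-disjoint (up to lower dimensional sets), the measures $\cH^{n-1}_{|\sigma}$ are mutually singular, and for a vector measure of the form $f \cH^{n-1}_{|\sigma}$ on a single face one has $|f \cH^{n-1}_{|\sigma}| = |f| \cH^{n-1}_{|\sigma}$. Summing the norms of the densities across all internal and boundary faces yields the stated formula for $|\delta V_\cK|$. The only really delicate point is the face-decomposition bookkeeping in the divergence theorem step; once the sign convention $n_{K_-,\sigma} = -n_{K_+,\sigma}$ is fixed, the rest is algebraic reorganization.
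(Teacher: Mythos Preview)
Your proof is correct and follows essentially the same route as the paper: apply the divergence theorem cell by cell, regroup the boundary integrals face by face, and read off the total variation from the mutually singular pieces. The only difference is cosmetic---you use the identity $\xdiv_P X = \xdiv(\Pi_P X)$ (valid because $\Pi_P$ is constant and symmetric) to reach the boundary term in one stroke, whereas the paper expands in an orthonormal basis $(\tau_1,\dots,\tau_d)$ of $P_K$ and integrates by parts component by component; both yield $\int_{\partial K} X\cdot \Pi_{P_K} n_{\mathrm{out}}\, d\cH^{n-1}$ up to sign, and the sign is irrelevant for $|\delta V_\cK|$.
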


We stress that the terms \emph{internal faces} and \emph{boundary faces} do not refer to the structure of the mesh $\cK$ but to the structure of the support of $V_\cK$.

\begin{proof}
Let $\displaystyle V_\cK = \sum_{K \in \cK} \frac{m_K}{|K|} \cL^n_{| K} \otimes \delta_{P_K}$ be a discrete varidold associated with the mesh $\cK$ and let $X \in \xC_c^1 (\Omega , \R^n)$. Then,
\[
\delta V_\cK (X) = \int_{\Omega \times \G} \Div_S X(x) \, d V_\cK (x,S) = \sum_{K \in \cK} \frac{m_K}{|K|} \int_K \Div_{P_K} X(x) \, d \cL^n (x) \: .
\]
Let us compute this term. Fix $(\tau_1, \ldots , \tau_d)$ a basis of the tangent plane $P_\cK$ so that
\[
\int_K \Div_{P_\cK} X(x) \, d \cL^n (x) = \sum_{j=1}^d \int_K DX(x) \tau_j \cdot \tau_j \, d \cL^n (x) \: ,
\]
and $\displaystyle  DX(x) \tau_j \cdot \tau_j = \sum_{k=1}^n (\nabla X_k (x) \cdot \tau_j) \tau_j^k$ so that
\begin{align*}
\int_K \Div_{P_\cK} X(x) \, d \cL^n (x) & = \sum_{j=1}^d \sum_{k=1}^n \tau_j^k \int_K (\nabla X_k (x) \cdot \tau_j)  \, d \cL^n (x) = - \sum_{j=1}^d \sum_{k=1}^n \tau_j^k \int_{\partial K} X_k \tau_j \cdot n_{out} \, d \cH^d \\
& = - \int_{\partial K} \sum_{j=1}^d (\tau_j \cdot n_{out}) \sum_{k=1}^n  X_k  \tau_j^k  \, d \cH^d = - \int_{\partial K} \sum_{j=1}^d (\tau_j \cdot n_{out})( X \cdot \tau_j)  \, d \cH^d \\
& = - \int_{\partial K} X(x) \cdot (\Pi_{P_\cK} n_{out}) \, d \cH^d (x) \: ,
\end{align*}
where $\Pi_{P_\cK}$ is the orthogonal projection onto $P_\cK$ and $n_{out}$ is the outward-pointing normal. Consequently
\[
\left| \delta V_\cK (X) \right| = \left| \sum_{K \in \cK} \frac{m_K}{|K|} \int_{\partial K} X(x) \cdot (\Pi_{P_\cK} n_{out}) \, d \cH^d (x) \right| \leq \left\| X \right\|_{\infty} \sum_{K \in \cK} \frac{m_K}{|K|} \left| \Pi_{P_\cK} n_{out} \right| \cH^d(\partial K) \: .
\]
For a fixed mesh, the sum is locally finite and then, $V_\cK$ has locally bounded first variation. But what happens if the size of the mesh tends to $0$? In order to compute the total variation of $\delta V_\cK$ as a Radon measure, we just have to rewrite the sum as a sum on the faces $\cE$ of the mesh. This is more natural since $\delta V_\cK$ is concentrated on faces. Thus
\begin{align*}
\delta V_\cK & = - \sum_{\substack{\sigma \in \cE_{int}, \\ \sigma = K_- | K_+}} \left[ \frac{m_{K_+}}{|K_+|} \Pi_{P_{K_+}} n_{K_+,\sigma} + \frac{m_{K_-}}{|K_-|} \Pi_{P_{K_-}} n_{K_-,\sigma} \right]  \cH^{n-1}_{| \sigma} - \sum_{\substack{\sigma \in \cE_b, \\ \sigma=K | \cdot}} \frac{m_{K}}{|K|} \Pi_{P_{K}} n_{K,\sigma} \, \cH^{n-1}_{| \sigma} \\
& = - \sum_{\substack{\sigma \in \cE_{int}, \\ \sigma = K_- | K_+}} \left[ \frac{m_{K_+}}{|K_+|} \Pi_{P_{K_+}}  - \frac{m_{K_-}}{|K_-|} \Pi_{P_{K_-}} \right] \cdot (n_{K_+,\sigma} ) \,  \cH^{n-1}_{| \sigma} - \sum_{\substack{\sigma \in \cE_b, \\ \sigma=K | \cdot}} \frac{m_{K}}{|K|} \Pi_{P_{K}} n_{K,\sigma} \, \cH^{n-1}_{| \sigma}\: .
\end{align*}
Therefore,
\[
| \delta V_\cK | = \sum_{\substack{\sigma \in \cE_{int}, \\ \sigma = K_- | K_+}} \left| \left[ \frac{m_{K_+}}{|K_+|} \Pi_{P_{K_+}}  - \frac{m_{K_-}}{|K_-|} \Pi_{P_{K_-}} \right] \cdot (n_{K_+,\sigma} ) \right| \,  \cH^{n-1}_{| \sigma} + \sum_{\substack{\sigma \in \cE_b, \\ \sigma=K | \cdot}} \frac{m_{K}}{|K|} \left| \Pi_{P_{K}} n_{K,\sigma} \right| \, \cH^{n-1}_{| \sigma} \: .
\]
\end{proof}

\begin{xmpl} \label{first_variation_explosion}
Let us estimate this first variation in a simple case. Let us assume that the mesh is a regular cartesian grid of $\Omega = ]0,1[^2 \subset \R^2$ of size $h_\cK$ so that for all $K \in \cK$ and $\sigma \in \cE$,
\[
|K| = h_\cK^2 \text{ and } \cH^1 (\sigma) = h_\cK \: .
\]
Consider the vector line $D$ of direction given by the unit vector  $\frac{1}{\sqrt{2}}(1,1)$. Let $V = \cH^1_{| D} \otimes \delta_D$ be the canonical $1$--varifold associated to $D$ and $V_\cK$ the volumetric approximation of $V$ in the mesh $\cK$, then
\begin{align*}
| \delta V_\cK |(\Omega) & = \sum_{\substack{\sigma \in \cE_{int}, \\ \sigma = K_- | K_+}} \left| \left[ \frac{m_{K_+}}{|K_+|} \Pi_{P_{K_+}}  - \frac{m_{K_-}}{|K_-|} \Pi_{P_{K_-}} \right] \cdot (n_{K_+,\sigma} ) \right| \,  \cH^{1}(\sigma) + \sum_{\substack{\sigma \in \cE_b, \\ \sigma=K | \cdot}} \frac{m_{K}}{|K|} \left| \Pi_{P_{K}} n_{K,\sigma} \right| \, \cH^{1}(\sigma) \\
& =  \frac{1}{h_\cK} \sum_{\substack{\sigma \in \cE_{int}, \\ \sigma = K_- | K_+}}  \left|  m_{K_+} - m_{K_-} \right| \left| \Pi_D n_{K_+,\sigma} \right| + \frac{1}{h_\cK}\sum_{\substack{\sigma \in \cE_b, \\ \sigma=K | \cdot}} m_{K} \left| \Pi_D n_{K,\sigma} \right| \: .
\end{align*}
And $\left| \Pi_D n_{K,\sigma} \right| = \frac{\sqrt{2}}{2}$ (for any $K$, $\sigma$) so that
\[
| \delta V_\cK |(\Omega) =  \frac{\sqrt{2}}{2 h_\cK} \sum_{\substack{\sigma \in \cE_{int}, \\ \sigma = K_- | K_+}}  \left|  m_{K_+} - m_{K_-} \right|  + \frac{\sqrt{2}}{2 h_\cK} \underbrace{\sum_{\substack{\sigma \in \cE_b, \\ \sigma=K | \cdot}} m_{K} }_{= \V (\Omega)} \: .
\]
So that if we now consider successive volumetric approximations of $V_{\cK_i}$ associated with successive meshes $\cK_i$ whose size $h_{\cK_i}$ tends to $0$ when $i$ tends to $\infty$,
\[
| \delta V_{\cK_i} |(\Omega) =  \frac{\sqrt{2}}{2 h_{\cK_i}} \left( \sum_{\substack{\sigma \in \cE_{int}, \\ \sigma = K_- | K_+}}  \left|  m_{K_+} - m_{K_-} \right|  +  \V (\Omega) \right) \geq \frac{\sqrt{2}}{2 h_{\cK_i}} \V (\Omega) \xrightarrow[i \to \infty]{} +\infty \: .
\]
\end{xmpl}

\begin{minipage}{0.55\textwidth}
\includegraphics[width=0.95\textwidth]{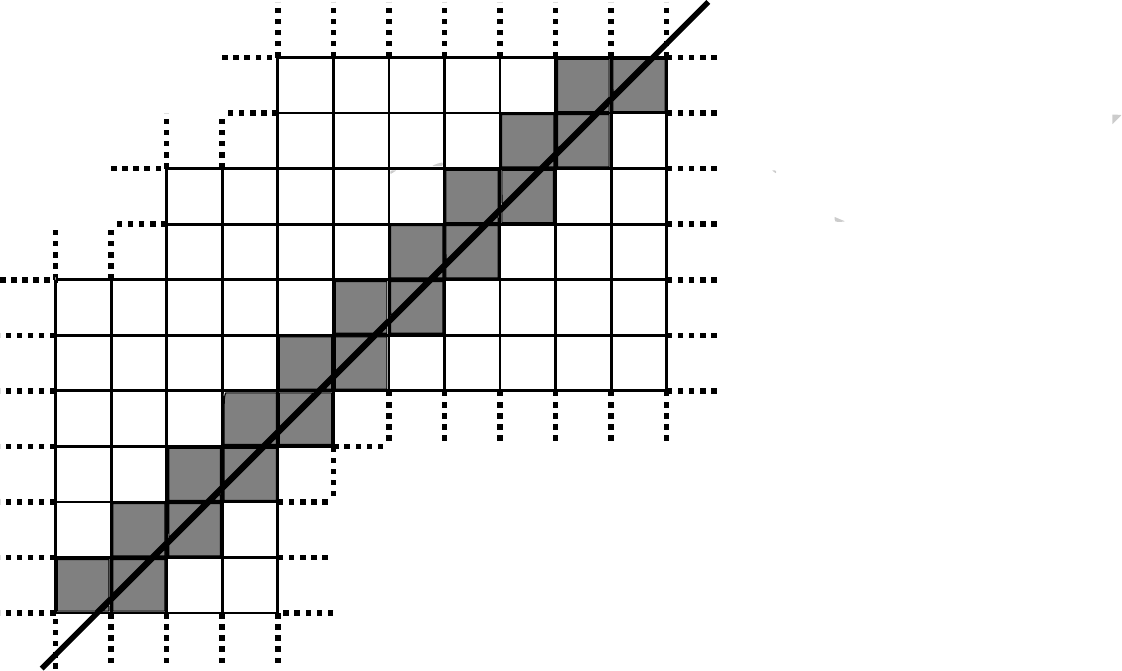}
\end{minipage}
\begin{minipage}{0.40\textwidth}
\includegraphics[width=0.95\textwidth]{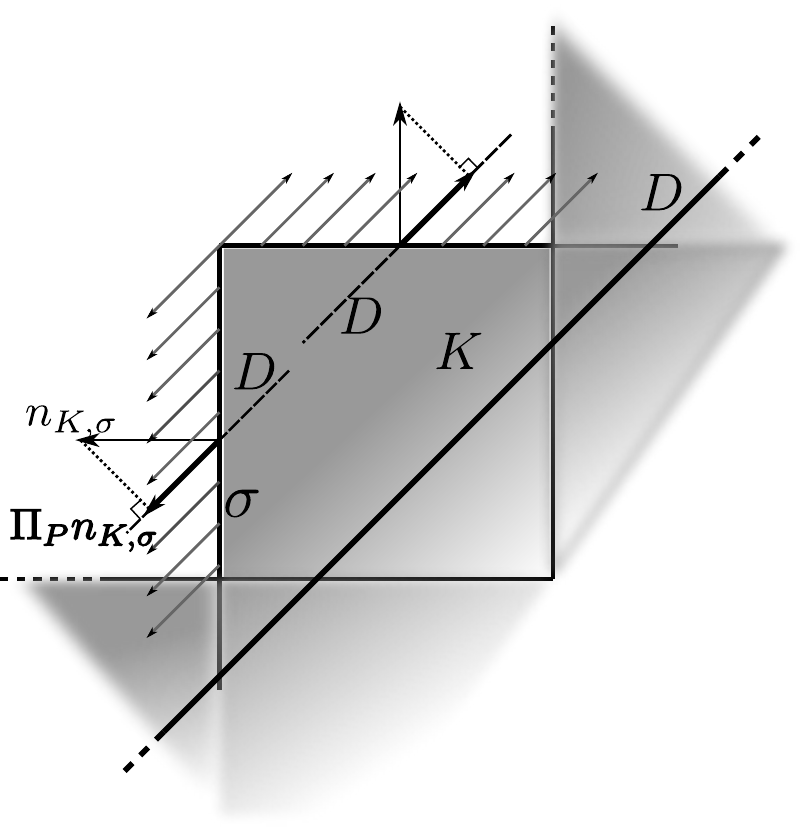}
\end{minipage}

More generally, the problem is that the tangential direction $P_{K}$ and the direction of the face $\sigma$ have no reason to be correlated so that the term $\left| \Pi_{P_{K}} n_{K,\sigma} \right|$ can be large (close to $1$) and thus, if the mesh is not adapted to the tangential directions $| \delta V_{\cK_i} |(\Omega) $ may explode when the size of the mesh $h_{\cK_i}$ tends to $0$. Of course, we are not saying that $| \delta V_{\cK_i} |(\Omega) $ always explodes when refining the mesh, but that it may happen and it is not something easy to control except by adapting the mesh to the tangential directions $P_\cK$ in the boundary cells. This is clearly a problem showing that the classical notion of first variation is not well adapted to this kind of volumetric discretization.

\subsubsection{Control of the first variation and rectifiability}

We will end these generalities about varifolds by linking the control of the first variation (generalized mean curvature) to the regularity of the varifolds. Let us begin with some property of the so called \emph{height excess} proved by Brakke in~\cite{MR485012} ($5.7$ p. $153$).

\begin{theo}[Height excess decay] \label{heightexcess}
Let $V = v(M,\theta) = \theta \cH^d_{| M} \otimes \delta_{T_x M}$ be a rectifiable $d$--varifold in some open set $\Omega \subset \R^n$. Assume that $V$ is integral (that is $\theta (x) \in \N$ for $\V$--almost every $x$) and assume that $V$ has locally bounded first variation. Then for $V$--almost every $(x,P) \in \Omega \times \G$,
\[
\heightex (x , P ,V , r) := \frac{1}{r^d} \int_{B_r(x)} \left( \frac{d(y-x,P)}{r} \right)^2 \, d \V (y) = o_x (r) \: .
\]
\end{theo}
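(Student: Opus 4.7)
The plan is to reduce the claim to a blow-up analysis around $\|V\|$-a.e.\ point $x \in M$, exploiting rectifiability, integrality, and the first variation bound. First, since $V = \theta\,\cH^d_{|M} \otimes \delta_{T_x M}$, the claim for $V$-a.e.\ $(x,P)$ is equivalent to the claim for $\|V\|$-a.e.\ $x \in M$ with $P = T_x M$. At such an $x$, rectifiability yields an approximate tangent plane with multiplicity $\theta(x)$, i.e.\ $\frac{1}{r^d}(\psi_{x,r})_\# \|V\| \xrightharpoonup{\ast} \theta(x) \cH^d_{|T_x M}$. Rewriting
\[
\heightex(x, T_x M, V, r) = \int_{B_1(0)} d(z, T_x M)^2 \, d\!\left[\tfrac{1}{r^d}(\psi_{x,r})_\# \|V\|\right]\!(z)
\]
and testing against the bounded continuous function $z \mapsto d(z, T_x M)^2 \chi(z)$, with $\chi$ a cutoff of $B_1(0)$, gives immediately $\heightex(x, T_x M, V, r) \to 0$ -- the weaker statement that $\heightex$ is $o_x(1)$.

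To upgrade $o_x(1)$ to $o_x(r)$ I would invoke a Caccioppoli-type inequality for varifolds with locally bounded first variation. Testing $\delta V$ against the vector field $X(y) = \rho(|y-x|/r)(y - x - \Pi_P(y-x))$ for a radial cutoff $\rho$ and expanding $\xdiv_{P'} X$ produces an inequality of the form
\[
\frac{1}{r^{d+2}} \int_{B_{r/2}(x)} d(y-x, P)^2 \, d\|V\|(y) \; \leq \; \frac{C}{r^d}\int_{B_r(x)\times \G} \|\Pi_{P'} - \Pi_P\|^2 \, dV(y, P') \; + \; \text{error}(r),
\]
where $\text{error}(r)$ collects contributions from $\delta V$ controlled by $|\delta V|(B_r(x))$ together with the Ahlfors upper bound on $\|V\|$ coming from Allard's monotonicity formula.

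The tilt-excess on the right-hand side reduces, because $V$ is rectifiable, to $\frac{1}{r^d}\int_{B_r(x)} \|\Pi_{T_y M} - \Pi_{T_x M}\|^2 \, d\|V\|(y)$. Since $y \mapsto T_y M$ is $\|V\|$-measurable, Lebesgue differentiation for Radon measures, applied at Lebesgue points of this map with respect to $\|V\|$, forces this average to tend to $0$ as $r \to 0$ for $\|V\|$-a.e.\ $x$. Combined with the Caccioppoli-type inequality this already produces a qualitative decay of $\heightex$, but to squeeze out the extra factor $r$ one must exploit integrality: Brakke's iteration compares $V$ on dyadic annuli to the integer model $\theta(x)\, v(T_x M, 1)$ and uses that, modulo a controlled first variation error, the tilt-excess contracts by a universal geometric factor between two consecutive scales, so that $\heightex$ decays faster than linearly.

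The main obstacle is precisely this last step. Pure weak convergence, as in the first paragraph, is blind to the rate and yields only $o_x(1)$. The gain to $o_x(r)$ requires a rigidity-plus-iteration argument of Allard/Brakke type: an integral almost-stationary varifold close to $\theta(x) v(T_x M)$ on one scale must be even closer at the next dyadic scale, and this iteration has to be carried out while carrying along the first-variation error terms from $\delta V$. This is the technical heart of Brakke's $5.7$ in \cite{MR485012}, and essentially all of the work is concentrated there.
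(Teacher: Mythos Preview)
The paper does not prove this theorem. It is stated as a result ``proved by Brakke in~\cite{MR485012} ($5.7$ p.~$153$)'' and invoked as a black box, in particular in the proof of Proposition~\ref{min_of_continuous_energy} to show that $E_0(x,T_xM,V)<+\infty$ for an integral varifold with bounded first variation. So there is no proof in the paper to compare against; your proposal is effectively a sketch of Brakke's argument rather than a reconstruction of anything the author wrote.

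As a sketch, your outline is honest and largely accurate about where the difficulty lies. The reduction to $P=T_xM$ and the $o_x(1)$ statement via blow-up are correct and easy. The Caccioppoli-type inequality you write down, obtained by testing $\delta V$ against $X(y)=\rho(|y-x|/r)\bigl(y-x-\Pi_P(y-x)\bigr)$, is the right kind of identity, and it does relate height excess to tilt excess plus a first-variation remainder. You are also right that the tilt excess tends to zero at $\|V\|$-Lebesgue points of $y\mapsto T_yM$.

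Where your proposal remains a genuine gap is exactly where you flag it: the passage from $o_x(1)$ to $o_x(r)$. The Caccioppoli step as you have written it gives $\heightex(x,T_xM,V,r)\leq C\cdot(\text{tilt excess at scale }2r)+\text{error}(r)$, and the tilt excess is only known to be $o_x(1)$, not $o_x(r)$, from Lebesgue differentiation. So the inequality by itself buys nothing beyond $o_x(1)$. Brakke's actual argument in~\cite{MR485012}~5.5--5.7 does not proceed by an Allard-type excess-decay iteration as you suggest; it is a more direct monotonicity-style computation (the ``perpendicular measure'' estimate of 5.5) that produces the extra power of $r$ by integrating a weighted identity in the radial variable and exploiting integrality to control the density ratio from below. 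Your iteration picture is plausible in spirit but is not how the cited proof goes, and you have not supplied the mechanism that would make an iteration converge here. In short: the proposal correctly locates the hard step but does not carry it out, and the paper does not either---it defers entirely to Brakke.
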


\begin{remk} Let us notice that
\[
E_\alpha (x,P,V) = \int_{r=\alpha}^1  \heightex(x , P , V, r) \, \frac{dr}{r} \: .
\]
That is why we called these quantities \emph{averaged height excess}.
\end{remk}

We now state a compactness result linking the rectifiability to the control of the first variation. It is exactly the kind of result we are interested in, with the exception that, in our setting, the approximating varifolds are generally not rectifiable and, moreover, the following control on the first variation is not satisfied.

\begin{theo}[Allard Compactness Theorem, see $42.7$ in \cite{MR756417}]
Let $(V_i)_i = (v(M_i,\theta_i))_i$ be a sequence of $d$--rectifiable varifolds with locally bounded first variation in an open set $\Omega \subset \R^n$ and such that $\theta_i \geq 1 \: \: \| V_i \|$--almost everywhere. If
\[
\sup_i \left\lbrace \| V_i (W) \| + | \delta V_i | (W) \right\rbrace \leq c(W) < +\infty
\] for every open set $W \subset \subset \Omega$, then there exists a subsequence $(V_{i_n})_n$ weakly--$\ast$ converging to a rectifiable $d$--varifold $V$, with locally bounded first variation in $\Omega$, such that $\theta \geq 1$, and moreover
\[
| \delta V | (W) \leq \liminf_{n\to\infty} | \delta V_{i_n} | (W) \quad \forall \: W \subset \subset \Omega \: .
\]
If for all $i$, $V_i$ is an integral varifold then $V$ is integral too.
\end{theo}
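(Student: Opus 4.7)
The strategy splits into four steps: extraction of a weakly--$\ast$ convergent subsequence, identification of the limit's first variation, propagation of a lower density bound from the $V_i$ to $V$, and invocation of Allard's rectifiability theorem.

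The extraction step is routine compactness. Since $G_{d,n}$ is compact, the mass bound $\sup_i \|V_i\|(W) \leq c(W)$ for $W \subset\subset \Omega$ gives $\sup_i V_i(W \times G_{d,n}) \leq c(W)$, so Banach--Alaoglu together with a diagonal extraction produces a subsequence (which I do not relabel) with $V_i \xrightharpoonup[i \to \infty]{\ast} V$ for some Radon measure $V$ on $\Omega \times G_{d,n}$. The bound on $|\delta V_i|(W)$ allows a further extraction so that $\delta V_i$ converges weakly--$\ast$ to some vector-valued Radon measure $\mu$. Testing against any $X \in \xC_c^1(\Omega,\R^n)$, the map $(x,P) \mapsto \xdiv_P X(x)$ is continuous and bounded on $\Omega \times G_{d,n}$, so $\mu(X) = \lim_i \delta V_i(X) = \delta V(X)$; hence $V$ has locally bounded first variation and the standard lower semicontinuity of total variation under weak--$\ast$ convergence yields $|\delta V|(W) \leq \liminf_i |\delta V_i|(W)$.

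The core of the argument is the lower density bound. For each rectifiable $V_i$ with locally bounded first variation, Allard's monotonicity formula gives, for $\|V_i\|$--a.e.\ $x$ and $0 < s < r < \dist(x,\Omega^c)$,
\[
\frac{\|V_i\|(B_s(x))}{\omega_d s^d} \leq e^{C(r-s)} \left( \frac{\|V_i\|(B_r(x))}{\omega_d r^d} + \frac{1}{\omega_d}\int_s^r \frac{|\delta V_i|(B_\tau(x))}{\tau^d}\,d\tau \right).
\]
Letting $s \downarrow 0$ and using $\theta_i \geq 1$, the left-hand side tends to a quantity $\geq 1$, so I obtain the uniform lower bound $\|V_i\|(B_r(x)) \geq \omega_d r^d (1 - \eta(r))$ for every $x \in \supp \|V_i\|$, with $\eta(r) \to 0$ as $r \to 0$ depending only on the uniform first--variation bound. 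Choosing radii $r$ with $\|V\|(\partial B_r(x)) = 0$ (Proposition~\ref{boundary_weight} excludes only a countable set of bad radii) and passing to the limit via the Borel--set characterization of weak--$\ast$ convergence, I conclude $\theta_*^d(\|V\|,x) \geq 1$ for $\|V\|$--a.e.\ $x$.

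Finally, I invoke Allard's rectifiability theorem: a $d$--varifold with locally bounded first variation and strictly positive lower $d$--density $\|V\|$--a.e.\ is rectifiable, with multiplicity $\theta_*^d(\|V\|,\cdot)$; combined with the previous step this gives $V = v(M,\theta)$ with $\theta \geq 1$. If each $V_i$ is integral, a further argument using a slicing-plus-constancy statement for integer multiplicities (exploiting that a rectifiable varifold which is a weak--$\ast$ limit of integral varifolds with uniformly bounded first variation inherits integer multiplicity) promotes $\theta$ to be $\N$--valued. The main obstacle is precisely this last invocation: Allard's rectifiability theorem is the deep non-trivial black box, requiring a delicate blow-up analysis together with Preiss-type rigidity of measures with positive density, and the integrality upgrade is genuinely more subtle than the rectifiability statement itself.
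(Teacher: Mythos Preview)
The paper does not supply its own proof of this statement: it is quoted as background, with an explicit citation to Theorem~42.7 in Simon's lecture notes, and no argument is given. So there is nothing in the paper to compare your attempt against.

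Your outline is essentially the standard route taken in Simon's book: weak--$\ast$ compactness by Banach--Alaoglu, passage of the first variation to the limit by testing with $\xdiv_P X$, a uniform lower density bound obtained from the monotonicity formula, and then Allard's rectifiability theorem as a black box (with the integrality step as a separate, harder argument). Two small technical remarks: the monotonicity inequality you wrote is not quite the usual form (the exponential factor and the integral of $|\delta V_i|$ enter differently in the standard statement), and in transferring the lower density bound from $\|V_i\|$ to $\|V\|$ you implicitly need that every $x \in \supp\|V\|$ is the limit of points $x_i \in \supp\|V_i\|$, together with control of $\|V_i\|(B_r(x_i)\triangle B_r(x))$ --- the paper proves exactly these auxiliary facts as Proposition~\ref{weak_star_cv_and_support} and Lemma~\ref{mesure_difference_sym_boules}. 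With those caveats, your sketch is a faithful summary of the textbook proof.
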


The problem is that even if the limit $d$--varifold is rectifiable and has bounded first variation, it is not necessarily the case of an approximating sequence of varifolds. For instance, a point cloud varifold does not have bounded first variation. As for discrete $d$--varifolds of Example \ref{diffuse_discrete_varifolds}, we have computed the first variation and seen that it is bounded for a fixed mesh, however, when the size of the mesh tends to zero, the total variation of the first variation is no longer bounded (in general) because of some boundary terms. We need some other way to ensure rectifiability. That is why we are looking for something more volumetric than the first variation, as defined in the introduction, in order to enforce rectifiability:
\[
E_\alpha (x,P,V) = \int_{r=\alpha}^1 \frac{1}{r^d} \int_{y \in B_r(x) \cap \Omega} \left( \frac{d(y-x,P)}{r} \right)^2 \, d \V(y) \, \frac{dr}{r} \: .
\]
We now have two questions we want to answer:
\begin{enumerate}
\item Assume that $(V_i)_i$ is a sequence of $d$--varifolds weakly--$\ast$ converging to some $d$--varifold $V$ with the following control
\begin{equation}
\sup_i \int_{\Omega \times \G} E_{\alpha_i}(x,P,V_i) \, dV_i (x,P) < +\infty \: , \label{control}
\end{equation}
can we conclude that $V$ is rectifiable ?
\item Is this condition better adapted to the case of (non-rectifiable) volumetric approximating varifolds (i.e. sequences of discrete varifolds as defined in Example~\ref{diffuse_dicrete_varifolds} ? We will prove that as soon as $V_i$ weakly--$\ast$ converges to $V$, there exists a subsequence satisfying the control~(\ref{control}).
\end{enumerate}
We begin with studying the static case.

\section{Static quantitative conditions of rectifiability for varifolds} \label{static_section}

In this section, we begin with studying the averaged height excess $E_0 (x,P,V)$ with respect to $P\in \G$ (for a fixed $d$--varifold and a fixed $x \in \Omega$). We show that if $V$ has bounded first variation then the approximate tangent plane at $x$ is the only plane for which $E_0$ can be finite. Then we state and prove quantitative conditions of rectifiability for varifolds in the static case. Let us recall how we defined $E_0 (x,P,V)$ in Theorem~\ref{static_thm}.
\begin{dfn}[Averaged height excess]
Let $V$ be a $d$--varifold in $\Omega \subset \R^n$ open subset. Then we define
\[
E_0 (x,P,V) = \int_{r=0}^1 \frac{1}{r^d} \int_{y \in B_r(x) \cap \Omega} \left( \frac{d(y-x,P)}{r} \right)^2 \, d \V(y) \, \frac{dr}{r} \: .
\]
\end{dfn}
\noindent We first study the averaged height excess $E_0 (x,P,V)$ with respect to $P \in \G$ for a fixed rectifable $d$--varifold.

\subsection{The averaged height excess energy $E_0 (x,P,V)$}

Notice that if $\V = \cH^d_{| M}$ then for every $d$--vector plane $P \in \G$,
\begin{align*}
\int_{r=0}^1 \beta_2 (x,r,M)^2 \, \frac{dr}{r} & = \int_{r=0}^1 \inf_{S \in \left\lbrace \text{affine } d-\text{plane} \right\rbrace} \left( \frac{1}{r^d} \int_{y \in B_r(x) \cap M} \left( \frac{d(y,S)}{r} \right)^2 \, d \cH^d(y) \right) \, \frac{dr}{r} \\
& \leq \int_{r=0}^1 \frac{1}{r^d} \int_{y \in B_r(x) \cap M} \left( \frac{d(y-x,P)}{r} \right)^2 \, d \cH^d(y) \, \frac{dr}{r} = E_0 (x,P,V) \: .
\end{align*}
Thus, assume that for $\cH^d$--almost every $x \in M$, $\theta_\ast^d (x, M) > 0$ holds and that there exists some $P_x \in \G$ such that $E_0 (x,P_x,\cH^d_{| M}) < + \infty$. Then thanks to Pajot's Theorem \ref{Pajot}, $M$ is $d$--rectifiable. As we will see, the point is that for any $x \in M$ where the tangent plane $T_x M$ exists, then $P_x = T_x M$ is the best candidate, among all $d$--planes $P$, to satisfy $E_0 (x,P_x,\cH^d_{| M}) < + \infty$. Consequently, in order to test the rectifiability of a $d$--varifold $V$, it is natural to study $E_0 (x,P,V)$ for $(x,P)$ in $\supp V$ (which is more restrictive than for any $(x,P) \in \supp \V \times \G$). More concretely, we will study$ \displaystyle \int_{\Omega \times \G} E_0 (x,P,V) \, dV(x,P)$ rather than $\displaystyle \int_{\Omega} \inf_{P \in G} E_0 (x,P,V) \, d\V(x)$.

In this whole part, we fix a rectifiable $d$--varifold in some open set $\Omega \subset \R^n$ and we study the behaviour of $E_0 (x,P,V)$ with respect to $P \in \G$. We are going to show that for a rectifiable $d$--varifold, this energy is critical: under some assumptions, it is finite if and only if $P$ is the approximate tangent plane. More precisely:

\begin{prop} \label{min_of_continuous_energy}
Let $V = v(M,\theta)$ be a rectifiable $d$--varifold in an open set $\Omega \subset \R^n$. Then,
\begin{enumerate}
\item Let $x \in M$ such that the approximate tangent plane $T_x M$ to $M$ at $x$ exists and $\theta(x) >0$ (thus for $\V$--almost every $x$) then for all $P \in \G$ such that $P \neq T_x M$, 
\[
E_0 (x,P,V) = + \infty \: .
\]
\item If in addition $V$ is integral ($\theta \in \N$ $\V$--almost everywhere) and has bounded first variation then for $\V$--almost every $x$,
\[
E_0 (x,T_x M,V) < +\infty \: .
\]
\end{enumerate}
\end{prop}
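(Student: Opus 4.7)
The proposition splits into two independent claims that call for different tools. For part 1, the aim is to show that $\heightex(x,P,V,r)$ stays bounded below by a positive constant as $r \downarrow 0$, which then forces the divergence of $E_0(x,P,V) = \int_0^1 \heightex(x,P,V,r)\,dr/r$ at the origin. Since $P \neq T_x M$ as $d$-planes, I would fix a unit vector $v \in T_x M \setminus P$ and set $\gamma = d(v,P) > 0$; by continuity, $d(\cdot,P) \geq \gamma/2$ on an open neighborhood of $v$ inside $T_x M \cap B_1(0)$ of positive $\cH^d$-measure, so
\[
c_0 := \theta(x)\,\cH^d\bigl\{z \in T_x M \cap B_1(0) \,|\, d(z,P) \geq \gamma/2\bigr\} > 0.
\]
Next combine the full-ball density convergence $r^{-d}\V(B_r(x)) \to \omega_d \theta(x) = \theta(x)\cH^d(T_x M \cap B_1(0))$ (immediate from the definition of approximate tangent plane) with Proposition \ref{approximate_plane_consequence} applied at $\beta = \gamma/2$; subtracting yields
\[
\frac{1}{r^d}\,\V\bigl\{y \in B_r(x) \,|\, d(y-x,P) \geq (\gamma/2) r\bigr\} \xrightarrow[r \to 0]{} c_0.
\]
For $r$ smaller than some $r_0(x) > 0$ this quantity is at least $c_0/2$, and restricting the integral defining $\heightex$ to that set gives $\heightex(x,P,V,r) \geq (\gamma/2)^2 \cdot c_0/2$, so the divergence of $\int_0^{r_0(x)} dr/r$ forces $E_0(x,P,V) = +\infty$.

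For part 2, the plan is a direct application of Brakke's height excess decay (Theorem \ref{heightexcess}): under integrality and locally bounded first variation, it yields $\heightex(x,T_x M,V,r) = o_x(r)$ for $V$-a.e.\ $(x,T_x M)$, hence for $\V$-a.e.\ $x$. For such $x$, choose $r_0(x) \in (0,1)$ so that $\heightex(x,T_x M,V,r) \leq r$ for all $r < r_0(x)$; then
\[
\int_0^{r_0(x)} \heightex(x,T_x M,V,r)\,\frac{dr}{r} \leq \int_0^{r_0(x)} dr < +\infty,
\]
while on $[r_0(x),1]$ the crude bound $\heightex(x,T_x M,V,r) \leq \V(B_1(x) \cap \Omega)/r_0(x)^d$ (finite because $\V$ is Radon) keeps the remaining logarithmic integral finite.

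The main obstacle lies in part 1: one has to rule out cancellation and show that the height excess integrand really does not decay. Proposition \ref{approximate_plane_consequence} as stated only controls the mass concentrated near $P$, so the complementary estimate giving mass at distance at least $\beta r$ from $P$ must be recovered by subtracting from the full-ball density limit. Once that complementary estimate is in place, the pointwise lower bound on $\heightex$ and the logarithmic divergence are both immediate.
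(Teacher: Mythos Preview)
Your proof is correct and follows essentially the same route as the paper: for part~1 you restrict $\heightex$ to the set $\{d(y-x,P)\geq \beta r\}$, recover its mass by subtracting the Prop.~\ref{approximate_plane_consequence} slab limit from the full-ball density, and integrate $dr/r$; for part~2 you invoke Brakke's decay and integrate, exactly as the paper does (with a bit more detail on the tail $[r_0,1]$). The only cosmetic difference is that the paper obtains the positive lower bound by estimating $\cH^d(T_xM\cap\{d(\cdot,P)<\beta\})\leq c_P\beta$ and shrinking $\beta$, whereas you fix $\beta=\gamma/2$ from a specific vector $v\in T_xM\setminus P$ and argue directly that the complement has positive measure---both yield the same conclusion.
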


\begin{proof}
We begin with the first assertion. Let $x \in M$ such that the approximate tangent plane  $T_x M$ to $M$ at $x$ exists. Let $P \in \G$ such that $P \neq T_x M$. Thanks to Prop.~\ref{approximate_plane_consequence}, for all $\beta >0$ we have
\[
\frac{1}{r^d} \V \left\lbrace y \in B_r (x) \, | \, d(y-x,P) < \beta r  \right\rbrace \xrightarrow[r \to 0_+]{} \theta(x) \cH^d \left( T_x M \cap \{ y \in B_1(0) \, | \, d(y,P) < \beta \} \right) \: .
\]
Now for all $\beta > 0$,
\begin{align*}
E_0 (x, P ,V) & = \int_{r=0}^1 \frac{dr}{r^{d+1}} \int_{B_r (x)} \left\lbrace \frac{d(y-x,P)}{r} \right\rbrace^2 \, d \V (y) \\
& \geq \int_{r=0}^1 \frac{dr}{r} \, \frac{1}{r^d} \int_{\left\lbrace y \in B_r (x) \, | \, d(y-x,P) \geq \beta r \right\rbrace} \beta^2 \, d \V (y) \\
& = \beta^2 \int_{r=0}^1 \frac{dr}{r} \, \frac{1}{r^d} \V \left\lbrace y \in B_r (x) \, | \, d(y-x,P) \geq \beta r \right\rbrace  \: .
\end{align*}
Let us estimate
\[
\frac{1}{r^d} \V \left\lbrace y \in B_r (x) \, | \, d(y-x,P) \geq \beta r \right\rbrace = \underbrace{ \frac{1}{r^d} \V (B_r (x)) }_{\xrightarrow[r\to 0]{} \theta (x) \omega_d } - \underbrace{ \frac{1}{r^d} \V \left\lbrace y \in B_r (x) \, | \, d(y-x,P) < \beta r \right\rbrace  }_{\xrightarrow[r \to 0]{} \theta (x) \cH^d (T_x M \cap \left\lbrace y \in B_1 (0) \, | \, d(y,P) < \beta \right\rbrace)  } \: .
\]
As $P \neq T_x M$, there exists some constant $c_P$ depending on $P$ and $T_x M$ such that
\[
\cH^d (T_x M \cap \left\lbrace y \in B_1 (0) \, | \, d(y,P) < \beta \right\rbrace) \leq c_P \beta \: .
\]
\begin{figure}
\centering
\includegraphics[width=0.60\textwidth]{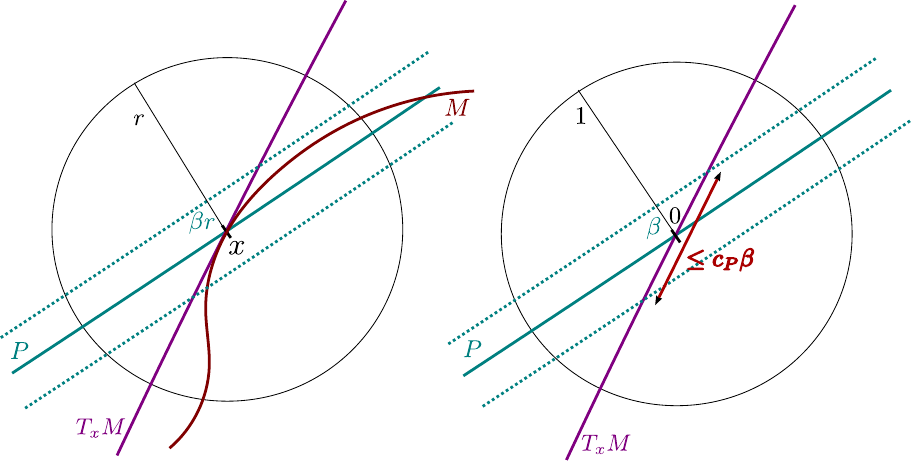}
\end{figure}
Consequently,
\begin{align*}
\lim_{r \to 0} \frac{1}{r^d} \V \left\lbrace y \in B_r (x) \, | \, d(y-x,P) \geq \beta r \right\rbrace & = \theta (x) \left( \omega_d - \cH^d (T_x M \cap \left\lbrace y \in B_1 (0) \, | \, d(y,P) < \beta \right\rbrace) \right) \\
& \geq \theta (x) ( \omega_d - c_P \beta) \\
& \geq \theta (x) \frac{\omega_d}{2} \text{ for } \beta \text{ small enough. }
\end{align*}
Eventually there exist $\beta>0$ and $r_0>0$ such that for all $r \leq r_0$ 
\[
\frac{1}{r^d} \V \left\lbrace y \in B_r (x) \, | \, d(y-x,P) \geq \beta r \right\rbrace \geq \theta(x)\frac{\omega_d}{4} \: ,
\]
and thus
\[
E_0 (x,P,V ) \geq \theta(x)\frac{\omega_d}{4} \beta^2 \int_{r=0}^{r_0} \frac{dr}{r} = +\infty \: .
\]
The second assertion is a direct consequence of Brakke's estimate (see Proposition \ref{heightexcess}) for the height excess of an integral $d$--varifold with bounded first variation:
\[
E_0 (x,T_x M,V)  = \int_{r=0}^1  \underbrace{\frac{1}{r} \heightex(x , P , V, r) }_{= o_x(1)} \, dr  < + \infty \: .
\]
\end{proof}

\subsection{The static theorem}

We begin with some lemmas before proving the static theorem (Theorem. \ref{static_thm}). This first proposition recalls that the first assumption of the static theorem (Ahlfors regularity) implies that $\V$ is equivalent to $\cH^d_{| \supp \V} $.

\begin{prop} \label{density_2}
Let $\Omega \subset \R^n$ be an open set and $\mu$ be a positive Radon measure in $\Omega$.
\begin{enumerate}[(i)]
\item Let $\beta_1, \, \beta_2 : \Omega \rightarrow \R_+$ continuous and such that for all $x \in \Omega$, $\beta_1 (x) < \beta_2(x)$, and let $C>0$. Then the sets $A = \left\lbrace x \in \Omega \: | \: \forall r \in \left( \beta_1(x) ,\beta_2(x)\right) , \: \mu (B_r(x)) \geq C r^d \right\rbrace$\\ and $B = \left\lbrace x \in \Omega \: | \: \forall r \in \left( \beta_1(x) ,\beta_2(x)\right), \: \mu (B_r(x)) \leq C r^d \right\rbrace$ are closed.
\item If there exist $C_1$, $C_2>0$ such that $C_1 \omega_d r^d \leq \mu (B_r(x))  \leq C_2 \omega_d r^d$ for $\mu$--almost all $x \in \Omega$ and for all $0 < r < d(x,\Omega^c)$, then
\[
C_1 \cH^d_{| E} \leq \mu \leq 2^d C_2 \cH^d_{| E}  \quad \text{with } E = \supp \mu \: .
\]
\end{enumerate}
\end{prop}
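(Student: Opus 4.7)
For (i), the plan is to prove closedness of $A$ and $B$ by a direct sequential argument, exploiting the continuity of $\beta_1, \beta_2$ together with the monotone behaviour of balls under small perturbations of the radius. Take $(x_n)_n \subset A$ converging to $x \in \Omega$ and fix $r \in (\beta_1(x), \beta_2(x))$. For $\epsilon > 0$ small enough, $r - \epsilon$ still belongs to $(\beta_1(x), \beta_2(x))$; by continuity of $\beta_1$ and $\beta_2$, it therefore lies in $(\beta_1(x_n), \beta_2(x_n))$ for all $n$ large. Combined with the inclusion $B_{r-\epsilon}(x_n) \subset B_r(x)$, valid as soon as $|x_n - x| < \epsilon$, this gives
\[
\mu(B_r(x)) \geq \mu(B_{r-\epsilon}(x_n)) \geq C(r-\epsilon)^d,
\]
and letting $\epsilon \downarrow 0$ yields $\mu(B_r(x)) \geq C r^d$, so $x \in A$. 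The argument for $B$ is symmetric: one uses the perturbed radius $r + \epsilon \in (\beta_1(x_n), \beta_2(x_n))$ together with the reverse inclusion $B_r(x) \subset B_{r+\epsilon}(x_n)$, valid for $|x_n - x| < \epsilon$ and $n$ large.

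For (ii), I would first reduce to (i) via the explicit choices $\beta_1 \equiv 0$ and $\beta_2(x) = d(x, \Omega^c)$ (both continuous on $\Omega$, with $\beta_1 < \beta_2$; the proof of (i) carries over since only the inequality $r > \beta_1(x)$ is used). This yields that the set
\[
F = \{ x \in \Omega : C_1 \omega_d r^d \leq \mu(B_r(x)) \leq C_2 \omega_d r^d \text{ for every } 0 < r < d(x,\Omega^c) \}
\]
is closed in $\Omega$. The hypothesis asserts $\mu(\Omega \setminus F) = 0$, and since $F$ is closed this forces $E = \supp \mu \subset F$. Consequently, at every $x \in E$ the lower and upper $d$-densities of $\mu$ satisfy
\[
\liminf_{r \downarrow 0} \frac{\mu(B_r(x))}{\omega_d r^d} \geq C_1 \quad \text{and} \quad \limsup_{r \downarrow 0} \frac{\mu(B_r(x))}{\omega_d r^d} \leq C_2.
\]

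The two inequalities $C_1 \cH^d_{| E} \leq \mu \leq 2^d C_2 \cH^d_{| E}$ then follow from the classical density-versus-Hausdorff-measure comparisons (e.g.\ Ambrosio--Fusco--Pallara, Theorem 2.56): the lower density bound on $E$ gives $\mu(A) \geq C_1 \cH^d(A)$ for every Borel $A \subset E$, while the upper density bound on $E$, combined with $\mu(\Omega \setminus E) = 0$, yields $\mu(A) = \mu(A \cap E) \leq 2^d C_2 \cH^d(A \cap E)$ for every Borel $A \subset \Omega$; the factor $2^d$ is the usual cost of a Vitali/$5r$-covering argument. The main obstacle is purely a matter of bookkeeping: ensuring that the strict inequalities defining $A$ and $B$ survive the double perturbation $r \mapsto r \pm \epsilon$ and $x_n \to x$. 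If one prefers to avoid the degenerate choice $\beta_1 \equiv 0$ in (ii), the set $F$ can equivalently be written as the countable intersection over $k \geq 1$ of the closed sets produced by (i) with $\beta_1^{(k)} \equiv 1/k$ and $\beta_2(x) = d(x, \Omega^c)$, which avoids any ambiguity about the endpoint.
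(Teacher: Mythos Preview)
Your proof is correct. Part~(ii) is essentially identical to the paper's argument: both pass through closedness of the good set, deduce $\supp\mu\subset F$, and invoke the density comparison of Ambrosio--Fusco--Pallara, Theorem~2.56.

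For part~(i), however, you take a different and somewhat cleaner route. The paper fixes the radius $r$, uses continuity of $\beta_1,\beta_2$ to get $r\in(\beta_1(x_k),\beta_2(x_k))$ for large $k$, and then appeals to the fact that $\mu(B_r(x_k))\to\mu(B_r(x))$ whenever $\mu(\partial B_r(x))=0$; since this holds only for almost every $r$, a second monotonicity step (choosing $r_k^-\uparrow r$ with $\mu(\partial B_{r_k^-}(x))=0$) is needed to recover the inequality for \emph{all} $r$. Your argument bypasses this two-step structure entirely: by perturbing the radius to $r\mp\epsilon$ and using the elementary inclusions $B_{r-\epsilon}(x_n)\subset B_r(x)\subset B_{r+\epsilon}(x_n)$, you never need to know which radii have zero boundary measure. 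The trade-off is minimal---the paper's version makes the role of the boundary measure explicit, which is a theme recurring elsewhere in the article---but your approach is more self-contained.
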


\begin{proof}
\begin{enumerate}[(i)]
\item Let us prove that $A = \left\lbrace x \in \Omega \: | \: \forall r \in \left( \beta_1(x) ,\beta_2(x) \right), \: \mu (B_r(x)) \geq C r^d \right\rbrace$ is closed. Let $(x_k)_k \subset A$ such that $x_k \xrightarrow[k \infty]{} x \in \Omega$ and let $r > 0$ such that $\beta_1 (x) < r < \beta_2 (x)$. For $k$ great enough, $\beta_1 (x_k) < r < \beta_2(x_k)$ so that $C r^d \leq \mu (B_r(x_k))$.
If $\mu (\partial B_r(x)) = 0$ then $\mu (B_r(x_k)) \xrightarrow[k \to +\infty]{} \mu (B_r(x))$ and then $C r^d \leq \mu (B_r(x))$ for almost every $r \in (\beta_1(x),\beta_2(x))$. But this is enough to obtain the property for all $r \in (\beta_1(x),\beta_2(x))$. Indeed, if $\mu (\partial B_r(x)) > 0$ then take $r^-_k < r $ such that for all $k$,
\[
\mu (\partial B_{r^{-}_k}(x)) = 0 \text{ and } r^{-}_k \xrightarrow[k \to +\infty]{} r \: ,
\]
and thus
\[
\mu (B_r(x)) \geq \mu (B_{r^-_k}(x)) \geq C {r^-_k}^d \xrightarrow[k \to +\infty]{} C r^d \: .
\]
Eventually $x \in A$ and $A$ is closed. We can prove that $B$ is closed similarly.
\item As the set
\[
E_1 = \left\lbrace x \in \Omega \: | \: \forall 0 < r < d(x,\Omega^c), \: \mu (B_r(x)) \geq C_1 \omega_d r^d \right\rbrace
\]
is closed (thanks to $(i)$) and of full $\mu$--measure, then $E = \supp \mu \subset E_1$. Therefore, for every $x \in E$,
\[
\theta_\ast^d (\mu,x) = \liminf_{r \to 0_+} \frac{\mu(B_r(x))}{\omega_d r^d} \geq C_1 \: . 
\]
So that (see Theorem $2.56$ p.$78$ in \cite{ambrosio}) $\mu \geq C_1 \cH^d_{| E}$.
\item For the same reason,
\[
E = \supp \mu \subset E_2 = \left\lbrace x \in \Omega \: | \: \forall 0 < r < d(x,\Omega^c), \: \mu (B_r(x)) \leq C_2 \omega_d r^d \right\rbrace \: .
\]
Therefore, for every $x \in E$,
\[
\theta^{\ast \, d} (\mu,x) = \limsup_{r \to 0_+} \frac{\mu(B_r(x))}{\omega_d r^d} \leq C_2 \: . 
\]
So that (again by Theorem $2.56$ p.$78$ in \cite{ambrosio}) $\mu \leq 2^d C_2 \cH^d_{| E}$.
\end{enumerate}
\end{proof}

\noindent The following lemma states that under some density assumption, the quantity $\min_{P \in \G} E_0 (x,P,V)$ controls the quantity linked to Jones' $\beta$ numbers.

\begin{lemma} \label{lemma_rectif_1}
Let $\Omega \subset \R^n$ be an open set and let $V$ be a $d$--varifold in $\Omega$. Assume that there is some constant $C>0$ and a Borel set $E \subset \Omega$ such that $\cH^d_{| E} \leq C \V$ then for all $x \in \Omega$,
\begin{equation}
\int_0^1 \beta_2(x,r,E)^2 \frac{dr}{r} \leq C \min_{P \in \G} E_0 (x,P,V) \: . \label{link_jones_E_0}
\end{equation}
\end{lemma}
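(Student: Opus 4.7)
The plan is to reduce the estimate to a single pointwise comparison of integrands, using the absolute continuity $\cH^d_{|E} \leq C\|V\|$ and the observation that $\beta_2(x,r,E)$ is defined as an infimum over \emph{affine} planes while $E_0(x,P,V)$ ranges over \emph{vector} planes, which can be matched by translating.

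First, I would fix an arbitrary $P \in \G$ and consider the affine $d$--plane $x + P$ passing through $x$. Since the distance to this affine plane satisfies $d(y, x+P) = d(y-x, P)$, and the infimum defining $\beta_2(x,r,E)^2$ is taken over all affine $d$--planes, I get the pointwise inequality in $r$:
\[
\beta_2(x,r,E)^2 \leq \frac{1}{r^d} \int_{B_r(x) \cap E} \left( \frac{d(y-x,P)}{r} \right)^2 \, d\cH^d(y).
\]

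Next, I would exploit the hypothesis $\cH^d_{|E} \leq C \|V\|$ as an inequality between Radon measures. Applied to the non-negative Borel integrand $\mathds{1}_{B_r(x)}(y) \left( d(y-x,P)/r \right)^2$, and using that $E \subset \Omega$, this yields
\[
\int_{B_r(x) \cap E} \left( \frac{d(y-x,P)}{r} \right)^2 d\cH^d(y) \leq C \int_{B_r(x) \cap \Omega} \left( \frac{d(y-x,P)}{r} \right)^2 d\|V\|(y).
\]
Combining the two displays and integrating in $r$ against $dr/r$ on $(0,1)$ (both sides being non-negative, so Tonelli applies without issue) gives
\[
\int_0^1 \beta_2(x,r,E)^2 \frac{dr}{r} \leq C \, E_0(x,P,V).
\]

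Finally, since $P \in \G$ was arbitrary and the left-hand side does not depend on $P$, I take the infimum over $P \in \G$ on the right. The infimum is in fact a minimum because $\G$ is compact and $P \mapsto E_0(x,P,V)$ is lower semicontinuous (by Fatou applied to the parameter $P$), so the bound $\int_0^1 \beta_2(x,r,E)^2 \, dr/r \leq C \min_{P \in \G} E_0(x,P,V)$ follows. There is no real obstacle here; the only point requiring a line of care is distinguishing affine from vector planes and verifying that the hypothesis $\cH^d_{|E} \leq C\|V\|$ transfers to integrals of arbitrary non-negative Borel functions.
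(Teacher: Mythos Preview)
Your proof is correct and follows essentially the same approach as the paper: bound $\beta_2(x,r,E)^2$ by choosing the particular affine plane $x+P$ for $P\in\G$, then use $\cH^d_{|E}\le C\|V\|$ to pass from $\cH^d_{|E}$ to $\|V\|$, integrate in $r$, and minimize over $P$. Your added remark that the infimum is attained (compactness of $\G$ together with lower semicontinuity of $P\mapsto E_0(x,P,V)$ via Fatou) is a detail the paper leaves implicit.
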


\begin{proof}
First notice that $\G \subset \left\lbrace \textrm{affine $d$--plane} \right\rbrace$, therefore
\begin{align*}
\int_{r=0}^1 \beta_2(x,r,E)^2 \frac{dr}{r^{d+1}} & = \int_{r=0}^1 \inf_{P \in \left\lbrace \textrm{affine $d$--plane} \right\rbrace } \left(  \int_{E \cap B_r(x)}  \left( \frac{d(y,P)}{r} \right)^2 \, d \cH^d(y) \right) \frac{dr}{r^{d+1}} \\
& \leq \inf_{P \in \left\lbrace \textrm{affine $d$--plane} \right\rbrace }  \int_{r=0}^1 \left(  \int_{E \cap B_r(x)}  \left( \frac{d(y,P)}{r} \right)^2 \, d \cH^d(y) \right) \frac{dr}{r^{d+1}} \\
& \leq \min_{P \in \G} \int_{r=0}^1 \left(  \int_{E \cap B_r(x)}  \left( \frac{d(y-x,P)}{r} \right)^2 \, d \cH^d(y) \right) \frac{dr}{r^{d+1}} \: .
\end{align*}
Then, the assumption $ \cH^d_{| E} \leq C \V$ implies that for any positive function $u$, $\displaystyle \int_E u \, d \cH^d \leq C \int_\Omega u \, d\V$ so that 
\[
\min_{P \in \G} \int_{r=0}^1 \left(  \int_{B_r(x)}  \left( \frac{d(y-x,P)}{r} \right)^2 \, d \cH^d_{| E}(y) \right) \frac{dr}{r^{d+1}} \leq C \min_{P \in \G} E_0 (x,P,V) \: ,
\]
which proves \ref{link_jones_E_0}.
\end{proof}

\noindent We now state a lemma that will enable us to localise the property of rectifiability.

\begin{lemma} \label{lemma_rectif_2}
Let $\Omega \subset \R^n$ be an open set and $\mu$ be a positive Radon measure in $\Omega$. Then there exists a countable family of open sets $(\omega_n)_n$ such that for all $n$, $\omega_n \subset\subset \omega_{n+1} \subset\subset \Omega$, $\mu (\partial \omega_n) = 0$ and $\Omega = \cup_n \omega_n$.
\end{lemma}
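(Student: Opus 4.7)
The plan is to realize the desired exhaustion as sublevel sets of a carefully chosen auxiliary function, and then use the fact that a locally finite measure can only ``charge'' countably many levels of a continuous function.

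More precisely, I would define $f:\Omega\to[0,+\infty)$ by
\[
f(x)=\max\!\left(|x|,\frac{1}{d(x,\Omega^c)}\right).
\]
This function is continuous (as the maximum of two continuous functions, noting that $d(\cdot,\Omega^c)>0$ on $\Omega$) and proper as a map $\Omega\to[0,+\infty)$: for every $c>0$ the set $\{f\leq c\}=\overline{B_c(0)}\cap\{x\in\Omega:\,d(x,\Omega^c)\geq 1/c\}$ is closed in $\R^n$, bounded, and contained in $\Omega$, hence compact. In particular $\Omega=\bigcup_{c>0}\{f<c\}$ and every sublevel set $\{f<c\}$ is relatively compact in $\Omega$.

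Next, I would choose the levels at which to cut. The level sets $\{f=c\}$ for different $c$ are pairwise disjoint, and each is contained in the compact set $\{f\leq c\}$ on which $\mu$ is finite. Consequently, for every fixed $N\in\N^*$, the set $\{c\in(0,N]:\mu(\{f=c\})>0\}$ is at most countable (otherwise one could extract uncountably many disjoint subsets of $\{f\leq N\}$ with positive $\mu$-measure, contradicting $\mu(\{f\leq N\})<\infty$). Taking a countable union over $N$, the set of ``bad'' levels is still countable, so one can pick an increasing sequence $c_n\uparrow+\infty$ with $\mu(\{f=c_n\})=0$ for every $n$.

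I would then set $\omega_n:=\{f<c_n\}$ and verify the four required properties. Each $\omega_n$ is open (by continuity of $f$), and $\overline{\omega_n}\subset\{f\leq c_n\}\subset\{f<c_{n+1}\}=\omega_{n+1}$, giving $\omega_n\subset\subset\omega_{n+1}\subset\subset\Omega$. The boundary satisfies $\partial\omega_n\subset\{f=c_n\}$, hence $\mu(\partial\omega_n)=0$ by the choice of $c_n$. Finally, properness of $f$ gives $\bigcup_n\omega_n=\Omega$. There is no real obstacle here; the only delicate point is the countability argument in the previous paragraph, which relies crucially on $\mu$ being a Radon measure (so finite on compact subsets of $\Omega$) and on the level sets of the continuous function $f$ being pairwise disjoint, which is why introducing such an $f$ is the natural move rather than working directly with balls or tubular neighborhoods.
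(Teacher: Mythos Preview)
Your proof is correct and is essentially the same as the paper's: the paper defines $\omega_t = B_t(0)\cap\{x\in\Omega:\,d(x,\Omega^c)>1/t\}$, which is exactly your sublevel set $\{f<t\}$ for $f(x)=\max(|x|,1/d(x,\Omega^c))$. The only cosmetic difference is in the countability step: the paper observes that $t\mapsto\mu(\omega_t)$ is monotone and hence has at most countably many jumps, whereas you argue directly via disjointness of the level sets $\{f=c\}$ together with local finiteness of $\mu$; both arguments yield the same conclusion.
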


\begin{proof}
For all $t>0$, let us consider the family of open sets
\[
\omega_t = B_t (0) \cap \left\lbrace x \in \Omega \: | \: d(y,\Omega^c) > 1/t \right\rbrace \: .
\]
The family $(\omega_t)_t$ is increasing so that $\mu (\omega_t)$ is increasing and has at most a countable number of jumps. Then for almost every $t$, $\mu (\omega_t) =0$ and it is easy to conclude.
\end{proof}


\noindent The last step before proving Theorem~\ref{static_thm} is to link the rectifiability of the mass $\V$ and the rectifiability of the whole varifold. The key point is the coherence between the tangential part of the varifold and the approximate tangent plane to the spatial part $\V$.

\begin{lemma} \label{lemma_rectif_3}
If $V$ is a $d$--varifold in $\Omega \subset \R^n$ such that
\begin{enumerate}[$-$]
\item $\V$ is $d$--rectifiable,
\item $V \left( \left\lbrace (x,P) \in \Omega \times \G \: | \: E_0 (x,P,V ) = +\infty \right\rbrace \right) = 0$,
\end{enumerate}
then $V$ is a rectifiable $d$--varifold.
\end{lemma}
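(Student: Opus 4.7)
The plan is to disintegrate $V$ along its projection onto $\Omega$ and then use Proposition \ref{min_of_continuous_energy}(1) to force the resulting slice measures on $\G$ to be Diracs at the approximate tangent planes of $M$.

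Since $\V$ is $d$-rectifiable, write $\V = \theta \cH^d_{|M}$ with $M$ countably $d$-rectifiable and $\theta > 0$ on $M$; the approximate tangent plane $T_xM$ exists for $\cH^d$-almost every $x \in M$. Since $\V = \pi_\# V$ where $\pi : \Omega \times \G \to \Omega$ is the canonical projection, the disintegration theorem furnishes a $\V$-measurable family of probability measures $\{\nu_x\}_{x \in \Omega}$ on $\G$ with
\[
V = \V \otimes \nu_x, \qquad \text{i.e.} \qquad \int_{\Omega \times \G} \varphi(x,P) \, dV(x,P) = \int_\Omega \int_\G \varphi(x,P) \, d\nu_x(P) \, d\V(x).
\]

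Proposition \ref{min_of_continuous_energy}(1) yields, for $\V$-almost every $x$, that $E_0(x,P,V) = +\infty$ for every $P \in \G$ with $P \neq T_xM$. Using this together with the hypothesis $V(\{(x,P) : E_0(x,P,V) = +\infty\}) = 0$ and Fubini via the disintegration,
\[
0 \, = \, V\bigl(\{E_0 = +\infty\}\bigr) \, = \, \int_\Omega \nu_x\bigl(\{P \in \G : E_0(x,P,V) = +\infty\}\bigr) \, d\V(x),
\]
so for $\V$-almost every $x$ the measure $\nu_x$ charges no $P \neq T_xM$. Being a probability measure on $\G$, we obtain $\nu_x = \delta_{T_xM}$ for $\V$-a.e.\ $x$.

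Plugging this back gives $V = \theta \cH^d_{|M} \otimes \delta_{T_xM}$, which is exactly the definition of a rectifiable $d$-varifold. The only subtle point is that Proposition \ref{min_of_continuous_energy}(1) was formulated for varifolds of the form $v(M,\theta)$, whereas rectifiability of $V$ is precisely what we are trying to prove; however, its proof only invokes Proposition \ref{approximate_plane_consequence} applied to the spatial measure $\V$, together with the existence of $T_xM$ and positivity of $\theta(x)$, and never uses the tangential structure of $V$, so the statement applies verbatim in our setting.
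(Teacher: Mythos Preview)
Your proof is correct and follows essentially the same route as the paper: disintegrate $V=\V\otimes\nu_x$, invoke Proposition~\ref{min_of_continuous_energy}(1) to see that $P\neq T_xM$ forces $E_0(x,P,V)=+\infty$ for $\V$--almost every $x$, and combine this with the hypothesis via the disintegration to conclude $\nu_x=\delta_{T_xM}$. Your final paragraph, noting that the proof of Proposition~\ref{min_of_continuous_energy}(1) only uses the rectifiability of $\V$ and not the tangential structure of $V$, is a useful clarification that the paper leaves implicit.
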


\begin{proof}
The mass $\V$ is $d$--rectifiable so that $\V=\theta \cH^d_{| M}$ for some $d$-rectifiable set $M$. We have to show that $V = \V \otimes \delta_{T_x M}$. Applying a disintegration theorem (\cite{ambrosio} $2.28$ p. $57$), there exist finite Radon measures $\nu_x$ in $\G$ such that for $\V$--almost every $x \in \Omega$, $\nu_x (\G) = 1$ and $V = \V \otimes \nu_x$. We want to prove that for $\V$--almost every $x$, $\nu_x = \delta_{T_x M}$ or equivalently,
\[
\nu_x(\left\lbrace P \in \G \: | \: P \neq T_x M \right\rbrace) = 0 \: .
\]
For a $d$--rectifiable measure $\V = \theta \cH^d_{| M}$, we have shown in Proposition \ref{min_of_continuous_energy} that for $\V$--almost every $x \in \Omega$,
\[
P \neq T_x M \Longrightarrow E_0 (x,P,V) = +\infty \: ,
\]
thus $\left\lbrace (x,P) \in \Omega \times \G \: | \: P \neq T_x M \right\rbrace \subset A_0 \times \G \cup \left\lbrace (x,P) \in \Omega \times \G \: | \: E_0 (x,P,V)=+\infty \right\rbrace$ with $\V(A_0) = 0$. Therefore $V (\left\lbrace (x,P) \in \Omega \times \G \: | \: P \neq T_x M \right\rbrace)=0$. Thus
\begin{align*}
V (\left\lbrace (x,P) \in \Omega \times \G \: | \: P \neq T_x M \right\rbrace) & = \int_{\Omega \times \G} \mathds{1}_{\{ P \neq T_x M \} } (x,P) \, dV(x,P) \\
& = \int_\Omega \left( \int_{\G} \mathds{1}_{ \{ P \neq T_x M \} } (x,P) \, d \nu_x(P) \right) d\V(x) \\
& = \int_\Omega \nu_x (\left\lbrace P \in \G \: | \: P \neq T_x M \right\rbrace) \, d\V(x)
\end{align*}
which means that for $\V$--almost every $x \in \Omega$, $\nu_x(\left\lbrace P \in \G \: | \: P \neq T_x M \right\rbrace) = 0$ thus for $\V$--almost every $x \in \Omega$, $\nu_x = \delta_{T_x M}$ and $V = \V \otimes \delta_{T_x M}$ is a $d$--rectifiable varifold.
\end{proof}
Let us now prove the static theorem:
\setcounter{theo}{2}
\begin{theo} \label{static_theorem}
Let $\Omega \subset \R^n$ be an open set and let $V$ be a $d$--varifold in $\Omega$ of finite mass $\V (\Omega) < +\infty$. Assume that:
\begin{enumerate}[(i)]
\item there exist $0 < C_1 < C_2$ such that for $\V$--almost every $x \in \Omega$ and for all $0 < r < d(x,\Omega^c)$ such that $B_r(x) \subset \Omega$,
\[
C_1 \omega_d r^d \leq \| V \| (B_r(x)) \leq C_2 \omega_d r^d \: ,
\]
\item $V\left( \left\lbrace (x,P) \in \Omega \times \G \, | \, E_0(x,P,V) = + \infty \right\rbrace \right) = 0$.
\end{enumerate}
Then $V$ is a rectifiable $d$--varifold.
\end{theo}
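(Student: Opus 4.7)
The plan is to reduce the statement to Pajot's rectifiability criterion (Theorem \ref{Pajot}) applied to $E := \supp \|V\|$, and then to upgrade rectifiability of the mass measure to rectifiability of the full varifold via Lemma \ref{lemma_rectif_3}. As a preliminary step I would use assumption (i) combined with Proposition \ref{density_2}(ii) to obtain
\[
C_1\, \cH^d_{| E} \le \|V\| \le 2^d C_2\, \cH^d_{| E},
\]
which yields at once $\cH^d(E) \le \|V\|(\Omega)/C_1 < +\infty$ and a uniform positive lower bound on the lower density $\theta_*^d(\cdot, E)$ at every point of $E$, i.e.\ hypothesis (i) of Pajot's theorem.

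Next, I would verify the Jones $\beta_2$-summability at $\cH^d$-almost every point of $E$. By the disintegration theorem, write $V = \|V\| \otimes \nu_x$ with $\nu_x$ a probability measure on $\G$. Assumption (ii) then rewrites as
\[
\int_\Omega \nu_x\bigl(\{P \in \G : E_0(x,P,V) = +\infty\}\bigr)\, d\|V\|(x) = 0,
\]
so for $\|V\|$-a.e.\ $x$ there exists $P_x \in \G$ with $E_0(x,P_x,V) < +\infty$. Applying Lemma \ref{lemma_rectif_1} with the bound $\cH^d_{| E} \le C_1^{-1} \|V\|$ coming from the preliminary step gives
\[
\int_0^1 \beta_2(x,r,E)^2\, \frac{dr}{r} \le C_1^{-1}\, E_0(x,P_x,V) < +\infty
\]
for $\|V\|$-a.e.\ $x$, and hence for $\cH^d$-a.e.\ $x \in E$ by the two-sided equivalence of the measures.

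Since Pajot's theorem requires a compact ambient set, I would localize via Lemma \ref{lemma_rectif_2} applied to $\|V\|$: write $\Omega = \bigcup_n \omega_n$ with $\omega_n \subset\subset \omega_{n+1} \subset\subset \Omega$ and $\|V\|(\partial \omega_n)=0$. Each $E \cap \overline{\omega_n}$ is compact with finite $\cH^d$-measure and inherits both hypotheses of Pajot's theorem (with $q = 2$, which is admissible for every $d \ge 1$), hence is $d$-rectifiable. Passing to the countable union, $E$ itself is countably $d$-rectifiable. Since $\|V\| \ll \cH^d_{| E}$, Radon--Nikodym produces a Borel density $\theta$ with $\|V\| = \theta\, \cH^d_{| E}$, so $\|V\|$ is a $d$-rectifiable measure. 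Lemma \ref{lemma_rectif_3} then closes the argument: from rectifiability of $\|V\|$ and assumption (ii), $V$ is a rectifiable $d$-varifold.

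The main obstacle I foresee is propagating the pointwise finiteness of $E_0(x,P_x,V)$ from $\|V\|$-a.e.\ to $\cH^d_{| E}$-a.e., so that Pajot's theorem really applies at $\cH^d$-almost every point of $E$, together with the localization required by its compactness hypothesis; both are cleanly handled by the two-sided comparison of the preliminary step and by Lemma \ref{lemma_rectif_2}, so beyond these book-keeping issues the proof is essentially a synthesis of the three lemmas with Pajot's criterion.
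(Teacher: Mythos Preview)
Your proposal is correct and follows essentially the same route as the paper: the two-sided density comparison via Proposition~\ref{density_2}, Lemma~\ref{lemma_rectif_1} to pass from $E_0$ to Jones' $\beta_2$ numbers, localization through Lemma~\ref{lemma_rectif_2} to apply Pajot's theorem on compact pieces, and Lemma~\ref{lemma_rectif_3} to conclude. The only cosmetic difference is that you invoke disintegration to produce a $P_x$ with $E_0(x,P_x,V)<+\infty$, whereas the paper argues directly that the set $B=\{x:\min_P E_0(x,P,V)=+\infty\}$ satisfies $B\times\G\subset\{(x,P):E_0(x,P,V)=+\infty\}$ and hence has $\|V\|$-measure zero; both reach the same conclusion.
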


\begin{remk}
If in particular $\displaystyle \int_{\Omega \times \G} E_0 (x,P,V) \, d V(x,P) < +\infty $ then the assumption $(ii)$ is satisfied.
\end{remk}

\begin{proof}
Now we just have to gather the previous arguments and apply Pajot's Theorem (Theoremm.~\ref{Pajot}).
\begin{enumerate}[$-$]
\item Step $1$: First hypothesis implies (thanks to Proposition~\ref{density_2}) that, setting $C_3 = 2^d C_2 > 0$ and $E = \supp \V$, we have
\[
C_1 \cH^d_{| E} \leq \V \leq C_3 \cH^d_{| E} \: .
\]
Hence $C_1 \cH^d (E ) \leq \V(\Omega) < + \infty$. Moreover, as $\V$ and $\cH^d_{| E}$ are Radon measures and $\V$ is absolutely continuous with respect to $\cH^d_{ E}$, then by Radon-Nikodym Theorem there exists some function $\theta \in \xL^1(\cH^d_{| E})$ such that
\[
\V = \theta \cH^d_{| E} \quad \text{with} \quad \theta(x) = \frac{d \V}{d \cH^d_{| E}}(x) = \lim_{r \to 0_+} \frac{\V (B_r(x))}{\cH^d (E \cap B_r(x))} \geq C_1 > 0 \text{ for } \cH^d \text{ a.e. } x \in E \: .
\] 
\item Step $2$: Thus we can now apply Lemma~\ref{lemma_rectif_1} so that for any $x \in \Omega$,
\[
\int_0^1 \beta_2(x,r,E)^2 \frac{dr}{r} \leq C_3 \min_{P \in \G} E_0 (x,P,V) \: ,
\]
but thanks to the second assumption, $V\left( \left\lbrace (x,P) \in \Omega \times \G \, | \, E_0(x,P,V) = + \infty \right\rbrace \right) = 0$. Let 
\[
B = \{ x \in \Omega \: | \: \min_{P \in \G} E_0 (x,P,V) = +\infty \} = \left\lbrace x \in \Omega \: | \: \forall P \in \G, \: E_0 (x,P,V) = +\infty \right\rbrace
\]
then
\begin{align*}
B \times \G = & \left\lbrace (x,P) \in \Omega \times \G \: | \: \forall Q \in \G , \:  E_0 (x,Q,V) = +\infty \right\rbrace \\
\subset & \left\lbrace (x,P) \in \Omega \times \G \: | \:  E_0 (x,P,V) = +\infty \right\rbrace \: .
\end{align*}
Therefore $\V(B) = V(B \times \G) \leq V \left( \left\lbrace (x,P) \in \Omega \times \G \: | \: E_0 (x,P,V) = +\infty \right\rbrace \right) = 0$. So that $\min_{P \in \G} E_0 (x,P,V)$ is finite for $\V$--almost any $x \in \Omega$. And by step $1$, $\V = \theta \cH^d_{| E}$ with $\theta \geq C_1$ for $\cH^d$--almost every $x \in E$, thus for $\cH^d$--almost every $x \in E$,
\begin{equation} \label{pajot_finiteness_jones_numbers}
\int_0^1 \beta_2(x,r,E)^2 \frac{dr}{r} < + \infty \: ,
\end{equation}
and
\begin{equation} \label{pajot_density_condition}
\theta_\ast^d (x,E) = \liminf_{r \to 0_+} \frac{\cH^d (E \cap B_r(x))}{\omega_d r^d} \geq \frac{1}{C_3}  \frac{\V( B_r(x))}{\omega_d r^d} \geq  \frac{C_1}{C_3} > 0 \: .
\end{equation}
\item Step $3$: We need to consider some compact subset of $E$ to apply Pajot's Theorem. The set $E$ being closed in $\Omega$, thus for every compact set $K \subset \Omega$, $E \cap K$ is compact. Thanks to Lemma \ref{lemma_rectif_2}, let $(\omega_n)_n$ be an increasing sequence of relatively compact open sets such that $\Omega = \cup_n \omega_n$ and for all $n$, $\cH^d (E \cap \partial \omega_n) = 0$. Let $K_n = \overline{\omega_n}$, then
\begin{itemize}
\item for all $x \in (E \cap K_n) \setminus \partial K_n = E \cap \overline{\omega_n}$ we have $\theta_\ast^d (x,E\cap K_n) = \theta_\ast^d (x,E)$ and thus by $(\ref{pajot_density_condition})$ and since $\cH^d (E \cap \partial K_n) = 0$,
\begin{equation} \label{pajot_density_condition_local}
\theta_\ast^d (x,E\cap K_n) > 0 \: \text{ for } \cH^d\text{--almost every } x \in E \cap K_n \: ,
\end{equation}
\item thanks to $(\ref{pajot_finiteness_jones_numbers})$, for $\cH^d$--almost every $x \in E \cap K_n$,
\begin{equation} \label{pajot_finiteness_jones_numbers_local}
\int_0^1 \beta_2(x,r,E \cap K_n)^2 \frac{dr}{r} \leq \int_0^1 \beta_2(x,r,E)^2 \frac{dr}{r} < +\infty \: .
\end{equation} 
\end{itemize}
According to $(\ref{pajot_density_condition_local})$ and $(\ref{pajot_finiteness_jones_numbers_local})$, we can apply Pajot's theorem to get the $d$--rectifiability of $E \cap K_n$ for all $n$ and hence the $d$--rectifiability of $E$ and $\V = \theta \cH^d_{| E}$.
\end{enumerate}
Eventually Lemma~\ref{lemma_rectif_3} leads the $d$--rectifiability of the whole varifold $V$.
\end{proof}

\section{The approximation case} \label{approximation_section}

We will now study the approximation case. As we explained before, we introduce some scale parameters (denoted $\alpha_i$ and $\beta_i$) allowing us to consider the approximating objects ``from far enough''. The point is to check that we recover the static conditions (the assumptions $(i)$ and $(ii)$ of Theorem~\ref{static_thm}) in the limit. We begin with some technical lemmas concerning Radon measures. Then we prove a strong property of weak--$\ast$ convergence allowing us to gain some uniformity in the convergence. We end with the proof of the quantitative conditions of rectifiability for varifolds in the approximation case.

\subsection{Some technical tools about Radon measures}

Let us state two technical tools before starting to study the approximation case.

\begin{lemma} \label{mesure_difference_sym_boules}
Let $\Omega \subset \R^n$ be an open set and $(\mu_i)_i$ be a sequence of Radon measures weakly--$\ast$ converging to some Radon measure $\mu$ in $\Omega$. Let $x \in \Omega$ and $x_i \xrightarrow[i \to \infty]{} x$.Then, for every $r > 0$,
\[
\limsup_i \mu_i (B_r(x) \triangle B_r(x_i) ) \leq \mu (\partial B_r (x)) \: .
\]
In particular, if $\mu (\partial B_r (x)) = 0$ then $\mu_i (B_r(x) \triangle B_r(x_i) ) \xrightarrow[i \to \infty]{} 0$.
\end{lemma}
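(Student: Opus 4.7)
The plan is to confine the symmetric difference $B_r(x) \triangle B_r(x_i)$ inside a small closed annular shell around $\partial B_r(x)$, then transfer the bound from $\mu_i$ to $\mu$ using the fact that weak--$\ast$ convergence yields $\limsup_i \mu_i(K) \leq \mu(K)$ on compact sets (Proposition~\ref{weak_cv_borel_sets}), and finally let the thickness of the shell shrink to zero using upper continuity of $\mu$ on the decreasing intersection of compact sets, which collapses to $\partial B_r(x)$.

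More concretely, fix $\epsilon > 0$ small enough that $\overline{B_{r+\epsilon}(x)} \subset \Omega$, and set $K_\epsilon = \overline{B_{r+\epsilon}(x)} \setminus B_{r-\epsilon}(x)$. Since $x_i \to x$, there is $i_0$ such that $|x_i - x| < \epsilon$ for every $i \geq i_0$. A straightforward triangle inequality argument shows that for such $i$, $B_r(x) \triangle B_r(x_i) \subset K_\epsilon$: indeed, if $y \in B_r(x)\setminus B_r(x_i)$ then $|y-x_i|\geq r$ forces $|y-x|>r-\epsilon$, and symmetrically on the other side. Hence
\[
\mu_i\bigl(B_r(x)\triangle B_r(x_i)\bigr) \leq \mu_i(K_\epsilon) \quad \text{for all } i\geq i_0.
\]

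Next, $K_\epsilon$ is compact and contained in $\Omega$, so by the upper semicontinuity part of Proposition~\ref{weak_cv_borel_sets},
\[
\limsup_i \mu_i\bigl(B_r(x)\triangle B_r(x_i)\bigr) \leq \limsup_i \mu_i(K_\epsilon) \leq \mu(K_\epsilon).
\]

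Finally, the family $(K_\epsilon)_{\epsilon>0}$ is decreasing as $\epsilon\downarrow 0$ with $\bigcap_{\epsilon>0} K_\epsilon = \partial B_r(x)$, and $\mu(K_{\epsilon_0})<+\infty$ for some fixed $\epsilon_0>0$ (since $K_{\epsilon_0}$ is a compact subset of $\Omega$ and $\mu$ is Radon). By continuity of $\mu$ from above on decreasing families of sets with finite measure, $\mu(K_\epsilon)\downarrow \mu(\partial B_r(x))$. Letting $\epsilon\to 0$ yields the desired inequality. The ``in particular'' statement is immediate since the lim sup is then zero and the quantities are nonnegative.

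There is no serious obstacle here; the only point requiring a little care is the inclusion $B_r(x)\triangle B_r(x_i)\subset K_\epsilon$, which depends on using the closure on the outer boundary of $K_\epsilon$ so that the strict/non-strict inequalities coming from the definitions of open balls match up properly.
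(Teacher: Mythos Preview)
Your proof is correct and follows essentially the same approach as the paper's: both confine the symmetric difference inside a closed annular shell around $\partial B_r(x)$, use the $\limsup$ inequality for compact sets from Proposition~\ref{weak_cv_borel_sets}, and then let the shell thickness shrink to zero. The only cosmetic difference is that the paper parametrizes the shell width by $|x-x_p|$ (after assuming without loss of generality that $(|x-x_i|)_i$ is decreasing), whereas you fix an arbitrary $\epsilon>0$ and wait until $|x_i-x|<\epsilon$.
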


\begin{proof}
Let us define the ring of center $x$ and radii $r_{\text{min}}$ and $r_{\text{max}}$:
\[
R(x,r_{\text{min}},r_{\text{max}}) := \left\lbrace y \in \Omega \, | \, r_{\text{min}} \leq |y-x| \leq r_{\text{max}} \right\rbrace \: .
\]
\begin{minipage}{0.68\textwidth}
It is easy to check that for all $i$, $B_r (x_i) \triangle B_r(x)$ is included into the closed ring of center $x$ and radii $r_{min}^i = r - |x-x_i|$ and $r_{max}^i = r + |x-x_i|$, that is
\[
B_r (x_i) \triangle B_r(x) \subset R(x,r - |x-x_i|,r + |x-x_i|) \: .
\]
Without loss of generality we can assume that $(|x-x_i|)_i$ is decreasing, then the sequence of rings $( R (x , r - |x-x_i | , r + |x- x_i | ))_i$ is decreasing so that for all $p \leq i$,
\begin{align*}
\mu_i ( B_r (x_i) \triangle B_r (x) ) & \leq \mu_i ( R (x , r - |x-x_i | , r + |x- x_i | ) \\
 & \leq \mu_i ( R (x , r - |x-x_p | , r + |x- x_p | ) ) \: .
\end{align*}
\end{minipage}
\begin{minipage}{0.35\textwidth}
\includegraphics[width=0.95\textwidth]{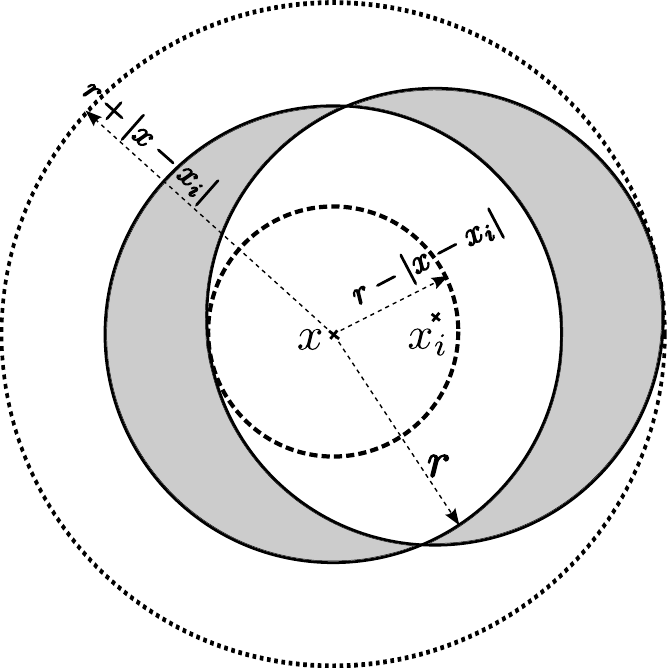}
\end{minipage}
\noindent Consequently, letting $i$ tend to $\infty$ and using the fact that $R (x , r - |x-x_p | , r + |x- x_p | )$ is compact, we have for all $p$,
\[
\limsup_{i \to + \infty} \mu_i ( B_r (x_i) \triangle B_r (x) )  \leq \mu (R (x , r - |x-x_p | , r + |x- x_p | ) )  \: ,
\]
and thus by letting $p \to + \infty$ we finally have,
\[
\limsup_{i \to + \infty} \mu_i ( B_r (x_i) \triangle B_r (x) ) \leq \mu (\partial B_r (x)) \: .
\]
\end{proof}

\begin{prop} \label{weak_star_cv_and_support}
Let $\Omega \subset \R^n$ be an open set and let $(\mu_i)_i$ be a sequence of Radon measures weakly--$\ast$ converging to a Radon measure $\mu$. Then, for every $x \in \supp \mu$, there exist $x_i \in \supp \mu_i$ such that $|x-x_i| \xrightarrow[i \to \infty]{} 0$.
\end{prop}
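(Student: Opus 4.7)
The plan is a simple argument by contradiction that exploits the lower semicontinuity of $\mu$ on open sets (the first half of Proposition~\ref{weak_cv_borel_sets}). Fix $x \in \supp \mu$. For each $i$, set
\[
d_i = \dist(x,\supp \mu_i),
\]
with the convention $d_i = +\infty$ if $\supp \mu_i = \emptyset$. The whole claim reduces to showing $d_i \to 0$, since then choosing any $x_i \in \supp \mu_i$ with $|x_i - x| \leq d_i + 1/i$ (which exists because $\supp \mu_i$ is closed) produces the required sequence; the finitely many indices where $\supp \mu_i = \emptyset$ can be handled by setting $x_i$ arbitrarily.

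Suppose for contradiction that $d_i \not\to 0$. Then there exist $\delta > 0$ and a subsequence (still indexed by $i$ for simplicity) with $d_i \geq \delta$. By definition of the support, this means $B_\delta(x) \cap \supp \mu_i = \emptyset$, and therefore
\[
\mu_i(B_\delta(x)) = 0 \quad \text{for every } i.
\]
Applying the open-set inequality from Proposition~\ref{weak_cv_borel_sets} to the open ball $B_\delta(x) \subset \Omega$ (which we may assume by taking $\delta$ small enough, since $\supp \mu \subset \overline\Omega$ and $x \in \Omega$ if $\mu$ is a Radon measure in $\Omega$), we obtain
\[
\mu(B_\delta(x)) \leq \liminf_i \mu_i(B_\delta(x)) = 0.
\]

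This contradicts the fact that $x \in \supp \mu$, which forces $\mu(B_r(x)) > 0$ for every $r > 0$. Hence $d_i \to 0$ and the construction above provides the desired sequence $x_i \in \supp \mu_i$ with $|x - x_i| \to 0$. There is no real obstacle here: the argument is entirely formal and the only ingredient needed is the lower semicontinuity on open sets already recorded in Proposition~\ref{weak_cv_borel_sets}; the minor care is only to handle the (irrelevant) possibility that $\supp \mu_i$ is empty for small $i$ and to ensure $B_\delta(x) \subset \Omega$, which is automatic for $\delta$ small since $x$ is an interior point of $\Omega$.
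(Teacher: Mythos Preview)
Your proof is correct and follows essentially the same contradiction argument as the paper: assume the distances $d_i$ do not tend to zero, extract a subsequence bounded below by some $\delta>0$, deduce $\mu_i(B_\delta(x))=0$ along that subsequence, and conclude via lower semicontinuity on open sets that $\mu(B_\delta(x))=0$, contradicting $x\in\supp\mu$. The only differences are cosmetic: you handle the empty-support case and the inclusion $B_\delta(x)\subset\Omega$ explicitly, and you pick $x_i$ approximately realizing the distance rather than exactly (the paper uses that closed sets in $\R^n$ attain the distance).
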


\begin{proof}
Let $x \in \supp \mu$, and choose $x_i \in \supp \mu_i$ such that $d(x,\supp \mu_i) = |x-x_i|$ (recall that $\supp \mu_i$ is closed). Let us check that $|x-x_i| \xrightarrow[i \to \infty]{} 0$. By contradiction, there exist $\eta > 0$ and a subsequence $(x_{\phi(i)})_i$ such that for all $i$, $| x_{\phi(i)} - x | \geq \eta$.
Therefore, for all $y \in \supp \mu_{\phi(i)}$, $| y -x | \geq | x_{\phi(i)} - x | \geq \eta$ so that
\[
\forall i, \: B_\eta (x) \cap \supp \mu_{\phi(i)} = \emptyset \text{ and thus } \mu_{\phi(i)} \left( B_\eta (x) \right) = 0 \: .
\]
Hence $\mu \left( B_\eta (x) \right) \leq \liminf_i \mu_{\phi(i)} \left( B_\eta (x) \right) = 0 $ and $x \notin \supp \mu$.
\end{proof}

\subsection{Density estimates}

We now look for density estimates for the limit varifold. Indeed, for sets of dimension greater than $d$, for instance $d+1$, the energy $E_0 (x,P,V)$ does not convey information of rectifiability since
\[
\frac{1}{r^{d+1}} \int_{B_r(x)} \left( \frac{d(y-x,P)}{r} \right)^2 \, d\V(y) \leq  \frac{\V(B_r(x))}{r^{d+1}} \leq \theta^{* d+1} (\V,x)
\]
is finite for almost any $x$, not depending on the regularity of $\V$. So that the first assumption in the static theorem (Ahlfors regularity $(\ref{static_density_condition})$ in Theorem~\ref{static_thm}) is quite natural. In this part, we link density estimates on $V_i$ and density estimates on $V$ and then recover the first assumption of the static theorem.

\begin{prop} \label{density_1}
Let $\Omega \subset \R^n$ be an open set. Let $(\mu_i)_i$ be a sequence of Radon measures in $\Omega$, weakly--$\ast$ converging to some Radon measure $\mu$. Assume that there exist $0 < C_1 < C_2$ and a positive decreasing sequence $(\beta_i)_i$ tending to $0$ such that for $\mu_i$--almost every $x \in \Omega$ and for every $r> 0$ such that $\beta_i < r < d(x,\Omega^c)$,
\[
C_1 r^d \leq \mu_i (B_r(x)) \leq C_2 r^d \: .
\]
Then for $\mu$--almost every $x \in \Omega$ and for every $0< r < d(x,\Omega^c)$,
\[
C_1 r^d \leq \mu (B_r(x)) \leq C_2 r^d \: .
\]
\end{prop}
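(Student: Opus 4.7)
The plan is to transfer the density bounds from $\mu_i$ to $\mu$ by working pointwise on $\supp \mu$, exploiting the closedness of the good sets and approximating each $x \in \supp\mu$ by points $x_i \in \supp\mu_i$ at which the uniform bounds hold.

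\textbf{Step 1: Localize the bounds to the support of $\mu_i$.} For each $i$, apply Proposition~\ref{density_2}(i) with the continuous functions $\beta_1(x) \equiv \beta_i$ and $\beta_2(x) = d(x,\Omega^c)$ to conclude that the set
\[
A_i = \left\{ x \in \Omega \ :\ C_1 r^d \leq \mu_i(B_r(x)) \leq C_2 r^d \text{ for every } r \in (\beta_i, d(x,\Omega^c)) \right\}
\]
is closed in $\Omega$. Since the hypothesis says $\mu_i(\Omega \setminus A_i) = 0$, we get $\supp \mu_i \subset A_i$.

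\textbf{Step 2: Approximate points in $\supp\mu$.} Fix $x \in \supp\mu$. By Proposition~\ref{weak_star_cv_and_support}, there exist $x_i \in \supp\mu_i \subset A_i$ with $x_i \to x$. Fix $0 < r < d(x,\Omega^c)$. Since $\beta_i \to 0$ and $d(\cdot,\Omega^c)$ is $1$--Lipschitz, for $i$ large enough we have $\beta_i < r < d(x_i,\Omega^c)$, so
\[
C_1 r^d \leq \mu_i(B_r(x_i)) \leq C_2 r^d.
\]

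\textbf{Step 3: Pass to the limit for good radii.} Suppose moreover that $\mu(\partial B_r(x)) = 0$. By Proposition~\ref{weak_cv_borel_sets}(2) we have $\mu_i(B_r(x)) \to \mu(B_r(x))$, while Lemma~\ref{mesure_difference_sym_boules} yields $\mu_i(B_r(x) \triangle B_r(x_i)) \to 0$. Combining these,
\[
\bigl| \mu_i(B_r(x_i)) - \mu(B_r(x)) \bigr| \leq \bigl| \mu_i(B_r(x)) - \mu(B_r(x)) \bigr| + \mu_i(B_r(x) \triangle B_r(x_i)) \xrightarrow[i\to\infty]{} 0,
\]
so letting $i \to \infty$ in the inequality of Step~2 gives $C_1 r^d \leq \mu(B_r(x)) \leq C_2 r^d$ for every such $r$.

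\textbf{Step 4: Extend to every admissible radius.} By Proposition~\ref{boundary_weight}(i), the set of $r$ with $\mu(\partial B_r(x)) > 0$ is at most countable, so Step~3 holds for $\mathcal L^1$--a.e.\ $r \in (0, d(x,\Omega^c))$. Now fix any $r$ in this interval. For the upper bound, pick $r_k \downarrow r$ with $\mu(\partial B_{r_k}(x)) = 0$; then $B_r(x) \subset B_{r_k}(x)$, so $\mu(B_r(x)) \leq \mu(B_{r_k}(x)) \leq C_2 r_k^d \to C_2 r^d$. For the lower bound, pick $r_k \uparrow r$ with $\mu(\partial B_{r_k}(x)) = 0$; then $B_{r_k}(x) \subset B_r(x)$, so $\mu(B_r(x)) \geq \mu(B_{r_k}(x)) \geq C_1 r_k^d \to C_1 r^d$. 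This establishes the bounds for every $x \in \supp\mu$ and every admissible $r$, which is stronger than the $\mu$--a.e.\ statement claimed.

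The only delicate point is Step~3, where the balls $B_r(x_i)$ are centered at moving points so that Proposition~\ref{weak_cv_borel_sets} does not apply directly; Lemma~\ref{mesure_difference_sym_boules} is exactly the tool that removes this obstruction. Everything else is either closedness from Proposition~\ref{density_2}(i), compatibility of supports under weak--$\ast$ convergence from Proposition~\ref{weak_star_cv_and_support}, or elementary monotonicity in $r$.
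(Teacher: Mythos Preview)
Your proof is correct and follows essentially the same route as the paper's: closedness of the good set $A_i$ via Proposition~\ref{density_2}(i), approximation of $x\in\supp\mu$ by $x_i\in\supp\mu_i\subset A_i$ via Proposition~\ref{weak_star_cv_and_support}, passage to the limit for radii with $\mu(\partial B_r(x))=0$ using Lemma~\ref{mesure_difference_sym_boules}, and then extension to all radii by monotonicity. Your write-up is in fact a bit more careful than the paper's (you make explicit that $\beta_i<r<d(x_i,\Omega^c)$ holds for large $i$, and you spell out the upper/lower approximating sequences $r_k$).
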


\begin{proof}
Let $\displaystyle A_i = \left\lbrace x \in \Omega \: | \: \forall r \in ]\beta_i, d(x,\Omega^c)[, \: C_1 r^d \leq \mu_i (B_r(x)) \leq C_2 r^d \right\rbrace$.
\begin{enumerate}[(i)]
\item First notice that $A_i$ is closed (thanks to Proposition~\ref{density_2} $(i)$) and $\mu_i (\Omega \setminus A_i) = 0$ so that $\supp \mu_i \subset A_i$.
\item Let $x \in \supp \mu$ and let $0 < r < d(x,\Omega^c)$. By Proposition~\ref{weak_star_cv_and_support}, let $x_i \in \supp \mu_i$ such that $x_i \rightarrow x$ then
\[
\left| \mu_i ( B_r(x)) - \mu_i ( B_r(x_i)) \right| \leq \mu_i \left( B_r(x_i) \triangle B_r(x) \right) \leq \mu_i ( R (x , r - |x-x_i | , r + |x- x_i | ) \: ,
\]
so that by Proposition~\ref{mesure_difference_sym_boules}, $\displaystyle \limsup_i \left| \mu_i ( B_r(x)) - \mu_i ( B_r(x_i)) \right| \leq \mu( B_r(x) )$. Therefore, for almost every $0 < r < d(x,\Omega^c)$, $\displaystyle \mu_i (B_r(x_i)) \xrightarrow[i \to \infty]{} \mu(B_r(x))$. Eventually, as $x_i \in \supp \mu_i \subset A_i$ then for almost every $r < d(x,\Omega^c)$,
\[
C_1 r^d \leq \mu (B_r (x)) = \lim_i \mu_i (B_r (x_i)) \leq C_2 r^d \: .
\]
We can obtain this inequality for all $r$ as in Proposition\ref{density_2}, taking $r^-_k < r < r^+_k$ and $r^-_k, \: r^+_k \rightarrow r$ and such that $\mu (\partial B_{r_k^{+}} (x)) = 0$, $\mu (\partial B_{r_k^{-}} (x)) = 0$.
\end{enumerate}
\end{proof}

\subsection{Uniformity of weak--$\ast$ convergence in some class of functions}

If we try to estimate $E_\alpha (x,P,V_\alpha) - E_\alpha (x,P,V)$, we can have the following:
\begin{align*}
& \left|E_\alpha (x,P,V_\alpha) - E_\alpha (x,P,V) \right| \\
& \leq \frac{1}{\alpha^{d+3}} \int_{r=0}^1  \left| \int_{B_r (x)} d(y-x,P)^2  d \|V_\alpha \| (y) - \int_{B_r (x)} d(y-x,P)^2 d \V (y) \right| \, dr \: .
\end{align*}
We now prove that the integral term tends to $0$ when $V_\alpha \xrightharpoonup[]{\ast} V$. For this purpose, we need a stronger way to write weak--$\ast$ convergence (with some uniformity) using the compactness of some subset of $\xC_c^0 (\Omega)$:
\begin{prop} \label{uniform_weak_star_convergence}
Let $\Omega \subset \R^n$ be an open set and $\left( \mu_i \right)_i$ be a sequence of Radon measures in $\Omega$ weakly--$\ast$ converging to a Radon measure $\mu$. Let $\omega \subset\subset \Omega$ such that $\mu (\partial \omega ) = 0$, then for fixed $k ,C \geq 0$,
\[
\sup \left\lbrace  \left| \int_\omega \phi \, d\mu_i - \int_\omega \phi \, d\mu  \right| \: : \: \phi \in \xLip_k (\omega), \: \| \phi \|_{\infty} \leq C \right\rbrace \xrightarrow[i \to \infty]{} 0
\]
\end{prop}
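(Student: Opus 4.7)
The strategy is a classical Arzelà--Ascoli plus $\varepsilon/3$ argument, whose only subtle ingredient is upgrading weak-$\ast$ convergence (tested against $\xC_c(\Omega)$) to convergence of integrals over the fixed open set $\omega$.

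First, I would extend every $\phi \in \xLip_k(\omega)$ with $\|\phi\|_\infty \leq C$ to a $k$-Lipschitz function on $\overline\omega$ bounded by $C$ (Lipschitz functions extend uniquely to the closure by uniform continuity). Call the resulting family $\cF$. Since $\overline\omega$ is compact and $\cF$ is equibounded and equicontinuous, Arzelà--Ascoli makes $\cF$ relatively compact in $\xC(\overline\omega)$ for the uniform norm.

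Second, I would prove the pointwise convergence $\int_\omega \psi \, d\mu_i \to \int_\omega \psi \, d\mu$ for each fixed $\psi \in \xC(\overline\omega)$. Extend $\psi$ to $\tilde\psi \in \xC_c(\Omega)$ preserving $\|\tilde\psi\|_\infty$ (Tietze, followed by multiplication by a cutoff supported in $\Omega$), and reduce to $\tilde\psi \geq 0$ by splitting into positive and negative parts. For small $\delta>0$, pick cutoffs $\eta^-_\delta \in \xC_c(\omega)$ and $\eta^+_\delta \in \xC_c(\Omega)$ with $\supp \eta^+_\delta \subset \{x \in \Omega : d(x,\overline\omega)<\delta\}$ and $\eta^-_\delta \leq \mathds{1}_\omega \leq \eta^+_\delta$. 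Both $\eta^\pm_\delta \tilde\psi$ lie in $\xC_c(\Omega)$, so weak-$\ast$ convergence yields
\[
\int_\Omega \eta^-_\delta \tilde\psi \, d\mu \;\leq\; \liminf_i \int_\omega \psi \, d\mu_i \;\leq\; \limsup_i \int_\omega \psi \, d\mu_i \;\leq\; \int_\Omega \eta^+_\delta \tilde\psi \, d\mu.
\]
Letting $\delta\downarrow 0$ and invoking dominated convergence (with the integrable dominant $\|\tilde\psi\|_\infty \mathds{1}_{\{d(\cdot,\overline\omega)<\delta_0\}}$, whose $\mu$-mass is finite since $\overline\omega$ is compact in $\Omega$), the assumption $\mu(\partial\omega)=0$ collapses both extremes to $\int_\omega \psi \, d\mu$.

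Third, I would finish with the $\varepsilon/3$ argument. By Proposition \ref{weak_cv_borel_sets}, $\mu_i(\omega)\to\mu(\omega)$, so $M := \sup_i \mu_i(\omega) + \mu(\omega) < +\infty$. Given $\varepsilon>0$, cover $\cF$ by finitely many $\xC(\overline\omega)$-balls of radius $\varepsilon/(3(M+1))$ centered at $\phi_1,\ldots,\phi_N$. By Step 2 applied to each $\phi_j$, choose $I$ so that $|\int_\omega \phi_j \, d\mu_i - \int_\omega \phi_j \, d\mu| < \varepsilon/3$ for all $i\geq I$ and $j=1,\ldots,N$. For any $\phi \in \cF$, selecting $j$ with $\|\phi-\phi_j\|_\infty < \varepsilon/(3(M+1))$ and applying the triangle inequality bounds $|\int_\omega \phi \, d\mu_i - \int_\omega \phi \, d\mu|$ by $\varepsilon$ uniformly over $\cF$. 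The main obstacle is Step 2: weak-$\ast$ convergence is phrased against $\xC_c(\Omega)$, while we must integrate against the discontinuous function $\phi \mathds{1}_\omega$; the sandwich by $\eta^\pm_\delta$ is precisely what converts the hypothesis $\mu(\partial\omega)=0$ into the desired integral convergence, and the Arzelà--Ascoli upgrade from pointwise-in-$\phi$ to uniform-in-$\phi$ is then routine.
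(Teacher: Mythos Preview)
Your proof is correct and follows essentially the same approach as the paper: both rely on Arzelà--Ascoli compactness of the family $\{\phi \in \xLip_k(\omega) : \|\phi\|_\infty \leq C\}$ on $\overline\omega$, together with the fact that $\mu(\partial\omega)=0$ upgrades weak-$\ast$ convergence to convergence of $\int_\omega \psi\, d\mu_i$ for every fixed $\psi \in \xC(\overline\omega)$. The only cosmetic difference is that the paper argues by contradiction (extracting a bad sequence $(\phi_i)_i$ and passing to a uniform limit via Ascoli) whereas you argue directly via a finite $\varepsilon$-net, and you spell out the sandwich argument in Step~2 that the paper simply asserts.
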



\begin{proof}
As we already said, the idea is to make use of the compactness of the family $\left\lbrace \phi \in \xLip_k (\omega), \: \| \phi \|_{\infty} \leq C \right\rbrace$. By contradiction, there exists a sequence $(\phi_i)_i$ with $\phi_i \in \xLip_k (\omega)$ and $ \| \phi_i \|_ {\infty} \leq C$ for all $i$ and such that
\[
\left| \int_\omega \phi_i \, d \mu_i - \int_\omega \phi_i \, d\mu \right| \text{ does not converge to } 0 \: .
\]
So that, up to some extraction, there exists $\epsilon > 0$ such that for all $i$,
\[
\left| \int_\omega \phi_i \, d \mu_i - \int_\omega \phi_i \, d\mu \right| > \epsilon \: .
\]
Every $\phi_i$ can be extended to $\phi_i \in \xC (\overline{\omega}) \cap \xLip_k (\overline{\omega})$ and then
\[ \left\lbrace \begin{array}{l}
\left( \phi_i \right)_i \subset \xC (\overline{\omega}) \cap \xLip_k (\overline{\omega}) \rm{\: is \: equilipschitz}, \\
\sup_i \| \phi_i \|_\infty \leq C \: .
\end{array} \right.
\] 
By Ascoli's theorem, up to a subsequence, there exists a function $\phi \in \xC (\overline{\omega}) \cap \xLip_k (\overline{\omega})$ with $\| \phi \|_\infty \leq C$ such that
\[
\phi_i \longrightarrow \phi \rm{\: uniformly \: in \:} \overline{\omega} \: .
\]
We now estimate:
\begin{align*}
\epsilon & < \left| \int_\omega \phi_i \, d \mu_i - \int_\omega \phi_i \, d\mu \right| \\
& \leq \left| \int_\omega \phi_i \, d \mu_i - \int_\omega \phi \, d\mu_i \right| + \left| \int_\omega \phi \, d \mu_i - \int_\omega \phi \, d\mu \right| + \left| \int_\omega \phi \, d \mu - \int_\omega \phi_i \, d\mu \right| \\
& \leq \| \phi_i - \phi \|_{\infty} \, \mu_i (\omega) + \left| \int_\omega \phi \, d \mu_i - \int_\omega \phi \, d\mu \right| + \|\phi - \phi_i \|_{\infty} \, \mu (\omega)
\end{align*}
As $\mu (\partial \omega) = 0$ then $\mu_i(\omega) \xrightarrow[i \to \infty]{} \mu(\omega) < +\infty$ (since $\mu(\omega) \leq \mu(\overline{\omega})$ and $\overline{\omega}$ is compact) so that the first and last terms tend to $0$. Moreover, since $\mu (\partial \omega) = 0$ then for every $f \in \xC^0 (\omega)$ (not necessarily compactly supported),
\[
\int f \, d \mu_i \xrightarrow[i \to \infty]{} \int f \, d \mu \: ,
\] 
which allows to conclude that the second term also tends to $0$ which leads to a contradiction.
\end{proof}

\noindent The following result is the key point of the proof of Theorem~\ref{dynamic_thm}. Let us first define for two Radon measures $\mu$ and $\nu$ in $\Omega$,
\begin{equation} 
\Delta_\omega^{k,C} (\mu , \nu) := \sup \left\lbrace \int_{r=0}^{\frac{d(\overline{\omega},\Omega^c) }{2}} \left| \int_{B_r(x) \cap \omega} \phi \, d\mu - \int_{B_r(x) \cap \omega} \phi \, d\nu \right| dr \: : \: \phi \in \xLip_k (\omega), \: \| \phi \|_{\infty} \leq C , \: x \in \overline{\omega} \right\rbrace \: .
\end{equation}

\begin{prop} \label{key_point}
Let $\Omega \subset \R^n$ be an open set. Let $\left( \mu_i \right)_i$ be a sequence of Radon measures weakly--$\ast$ converging to a Radon measure $\mu$ in $\Omega$ and such that $\sup_i \mu_i (\Omega) < +\infty$. Let $\omega \subset\subset \Omega$ be open such that $\mu (\partial \omega ) = 0$ then, for fixed $k ,C \geq 0$,
\[
\Delta_\omega^{k,C} (\mu_i , \mu) \xrightarrow[i \to +\infty]{} 0 \: .
\]
\end{prop}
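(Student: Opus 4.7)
The plan is to argue by contradiction, combining Ascoli's theorem (to handle the supremum over the function class) with Lemma~\ref{mesure_difference_sym_boules} and Proposition~\ref{boundary_weight} (to handle the supremum over the basepoint $x\in\overline{\omega}$), and finally dominated convergence to pass from pointwise-in-$r$ convergence to convergence of the integrals. Set $R = d(\overline{\omega},\Omega^c)/2$. Suppose the conclusion fails: up to extraction, there exist $\epsilon>0$, points $x_i\in\overline{\omega}$, and functions $\phi_i\in\xLip_k(\omega)$ with $\|\phi_i\|_\infty\le C$ such that
\[
\int_0^{R}\left|\int_{B_r(x_i)\cap\omega}\phi_i\,d\mu_i-\int_{B_r(x_i)\cap\omega}\phi_i\,d\mu\right|dr>\epsilon.
\]

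Since $\overline{\omega}$ is compact and the $\phi_i$ are equi-Lipschitz and uniformly bounded, Ascoli's theorem (as in the proof of Proposition~\ref{uniform_weak_star_convergence}) allows us to extract a further subsequence with $x_i\to x\in\overline{\omega}$ and $\phi_i\to\phi$ uniformly on $\overline{\omega}$, where $\phi\in\xLip_k(\overline{\omega})$ with $\|\phi\|_\infty\le C$. The heart of the proof is then to show that for a.e.\ $r\in(0,R)$ the integrand
\[
f_i(r):=\left|\int_{B_r(x_i)\cap\omega}\phi_i\,d\mu_i-\int_{B_r(x_i)\cap\omega}\phi_i\,d\mu\right|
\]
tends to $0$. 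By Proposition~\ref{boundary_weight} we have $\mu(\partial B_r(x))=0$ for a.e.\ $r$; fix such an $r$. Split
\[
\int_{B_r(x_i)\cap\omega}\phi_i\,d\mu_i-\int_{B_r(x_i)\cap\omega}\phi_i\,d\mu=\mathrm{I}_i+\mathrm{II}_i+\mathrm{III}_i
\]
with $\mathrm{I}_i=\int_{B_r(x_i)\cap\omega}\phi_i\,d\mu_i-\int_{B_r(x)\cap\omega}\phi\,d\mu_i$, $\mathrm{II}_i=\int_{B_r(x)\cap\omega}\phi\,d\mu_i-\int_{B_r(x)\cap\omega}\phi\,d\mu$, and $\mathrm{III}_i=\int_{B_r(x)\cap\omega}\phi\,d\mu-\int_{B_r(x_i)\cap\omega}\phi_i\,d\mu$.

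For $\mathrm{I}_i$ (and symmetrically $\mathrm{III}_i$), write $\mathrm{I}_i=\int_{B_r(x_i)\cap\omega}(\phi_i-\phi)\,d\mu_i+\int_\omega\phi\bigl(\mathds{1}_{B_r(x_i)}-\mathds{1}_{B_r(x)}\bigr)d\mu_i$; the first piece is bounded by $\|\phi_i-\phi\|_\infty\mu_i(\omega)\to 0$ (using $\sup_i\mu_i(\Omega)<+\infty$ and uniform convergence), and the second by $C\mu_i(B_r(x_i)\triangle B_r(x))$, which vanishes by Lemma~\ref{mesure_difference_sym_boules} since $\mu(\partial B_r(x))=0$. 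For $\mathrm{II}_i$, note that $\phi\mathds{1}_{B_r(x)\cap\omega}$ is a bounded measurable function whose discontinuity set is contained in $\partial(B_r(x)\cap\omega)\subset\partial B_r(x)\cup\partial\omega$, which is $\mu$-null by our choice of $r$ and the assumption $\mu(\partial\omega)=0$; hence by the vector-measure criterion (Prop.~1.62(b) in Ambrosio, recalled above) $\mathrm{II}_i\to 0$. Therefore $f_i(r)\to 0$ for a.e.\ $r\in(0,R)$.

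Finally, $f_i(r)\le C\bigl(\mu_i(\omega)+\mu(\omega)\bigr)\le C\bigl(\sup_j\mu_j(\Omega)+\mu(\Omega)\bigr)$, a constant independent of $r$ and $i$, so dominated convergence on the bounded interval $(0,R)$ yields $\int_0^R f_i(r)\,dr\to 0$, contradicting $\int_0^R f_i(r)\,dr>\epsilon$. The main obstacle is the simultaneous motion of the ball $B_r(x_i)$ and of the test function $\phi_i$; both are tamed by joint compactness (Ascoli + $\overline{\omega}$ compact) together with the symmetric-difference estimate of Lemma~\ref{mesure_difference_sym_boules}, and the integrability in $r$ is rescued by Fubini-type use of $\mu(\partial B_r(x))=0$ for a.e.\ $r$.
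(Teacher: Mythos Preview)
Your proof is correct and follows essentially the same route as the paper's: contradiction, Ascoli plus compactness of $\overline{\omega}$ to fix $\phi$ and $x$, a telescoping split controlled by uniform convergence of $\phi_i$, the symmetric-difference lemma for the moving center, and weak--$\ast$ convergence on the fixed ball, all wrapped up by dominated convergence in $r$. The only cosmetic difference is that the paper writes out a five-term telescope where you group into three terms $\mathrm{I}_i,\mathrm{II}_i,\mathrm{III}_i$, and for $\mathrm{II}_i$ the paper argues directly from $\mu(\partial(B_r(x)\cap\omega))=0$ whereas you cite the discontinuity-set criterion; these are the same argument.
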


\begin{proof}
The upper bound on the radius $r$ ensures that the closure of every considered ball, $B_r (x)$ for $x \in \Omega$, is included in $\Omega$.
We argue as in the proof of Proposition~\ref{uniform_weak_star_convergence}, assuming by contradiction that, after some extraction, there exist a sequence $(\phi_i)_i$ with $\phi_i \in \xLip_k (\omega)$ and $ \| \phi_i \|_ {\infty} \leq C$ for all $i$, and a sequence $(x_i)_i$ with  $x_i \in \overline{\omega}$ for all $i$, and $\epsilon > 0$ such that for all $i$,
\[
\int_{r=0}^{ \frac{d(\overline{\omega},\Omega^c) }{2}} \left| \int_{B_r (x_i) \cap \omega} \phi_i \, d \mu_i - \int_{B_r(x_i) \cap \omega} \phi_i \, d\mu \right| dr > \epsilon \: .
\]
By Ascoli's theorem and up to an extraction, there exist a function $\phi \in \xC^0 (\overline{\omega}) \cap \xLip_k (\overline{\omega})$ with $\| \phi \|_\infty \leq C$ such that $\displaystyle \phi_i \longrightarrow \phi$ uniformly in $\overline{\omega}$. Moreover $\overline{\omega}$ is compact so that, up to another extraction, there exists $ x \in \overline{\omega}$ such that $x_i \longrightarrow x$. We now estimate for every $r$,
\begin{align} 
     & \left| \int_{B_r (x_i) \cap \omega} \phi_i \, d \mu_i - \int_{B_r(x_i) \cap \omega} \phi_i \, d\mu \right| \leq  \left| \int_{B_r (x_i) \cap \omega} \phi_i \, d \mu_i - \int_{B_r(x_i) \cap \omega} \phi \, d\mu_i \right| \nonumber \\
     & + \left| \int_{B_r (x_i)} \phi \, d \mu_i- \int_{B_r(x)} \phi \, d\mu_i \right| + \left| \int_{B_r (x) \cap \omega} \phi \, d \mu_i - \int_{B_r(x) \cap \omega} \phi \, d\mu \right| \nonumber \\
     & + \left| \int_{B_r (x) \cap \omega} \phi \, d \mu - \int_{B_r(x_i) \cap \omega} \phi \, d\mu \right| + \left| \int_{B_r (x_i) \cap \omega} \phi \, d \mu - \int_{B_r(x_i) \cap \omega} \phi_i \, d\mu \right| \nonumber \\
\leq & \| \phi_i - \phi \|_{\infty} \, \mu_i \left( B_r(x_i) \right) \: + \:  \| \phi \|_\infty \, \mu_i \left( B_r (x_i) \triangle B_r(x) \right) \: + \: \left| \int_{B_r ( x) \cap \omega} \phi \, d \mu_i- \int_{B_r( x) \cap \omega} \phi \, d\mu \right| \nonumber \\
     & + \: \|\phi \|_\infty \, \mu \left( B_r (x_i) \triangle B_r(x) \right) \: + \: \| \phi - \phi_i \|_\infty \, \mu (B_r(x_i)) \nonumber \\
\leq & \| \phi_i - \phi \|_\infty \left( \mu_i (\Omega) + \mu(\Omega) \right) + \left| \int_{B_r (x) \cap \omega} \phi \, d \mu_i - \int_{B_r(x) \cap \omega} \phi \, d\mu \right| \label{temp_tend_to_zero} \\
&  + \| \phi \|_\infty \, \left( \mu_i \left( B_r (x_i) \triangle B_r(x) \right) + \mu \left( B_r (x_i) \triangle B_r(x) \right) \right) \: . \nonumber
\end{align}
The first term in the right hand side of $(\ref{temp_tend_to_zero})$ tends to $0$ since $\sup_i \mu_i (\Omega) < +\infty$ also implies $\mu(\Omega) < +\infty$. Concerning the second term, as $\mu (\partial \omega) = 0$ then for all $r \in (0,\frac{d(\overline{\omega},\Omega^c) }{2})$, $\mu (\partial (B_r(x) \cap \omega) ) \leq \mu(\partial B_r(x))$ and therefore the second term tends to $0$ for every $r$ such that $\mu (\partial B_r(x)) = 0$, i.e. for almost every $r \in (0,\frac{d(\overline{\omega},\Omega^c) }{2})$. As for the last term, thanks to Proposition~\ref{mesure_difference_sym_boules} we know that $\displaystyle \limsup_i \mu_i (B_r(x) \triangle B_r(x_i) ) + \mu (B_r(x) \triangle B_r(x_i) ) \leq 2 \mu (\partial B_r (x)) = 0$ for almost every $r \in (0,\frac{d(\overline{\omega},\Omega^c) }{2})$. Moreover the whole quantity $(\ref{temp_tend_to_zero})$ is uniformly bounded by
\[
5 C \left( \mu(\Omega) + \sup_i \mu_i (\Omega) \right) \: .
\]
Consequently the the right hand side of $(\ref{temp_tend_to_zero})$ tends to $0$ for almost every $r \in (0,\frac{d(\overline{\omega},\Omega^c) }{2})$ (such that $\mu(\partial B_r ( x)) = 0$) and is uniformly bounded by the constant $5 C \left( \mu(\Omega) + \sup_j \mu_j (\Omega) \right)$, then by Lebesgue dominated theorem, we have 
\[
\epsilon < \int_{r=0}^{\frac{d(\overline{\omega},\Omega^c) }{2}} \left| \int_{B_r (x_i) \cap \omega} \phi_i \, d \mu_i - \int_{B_r(x_i) \cap \omega} \phi_i \, d\mu \right| dr \xrightarrow[i \to \infty]{} 0
\] which concludes the proof.
\end{proof}

We can now study the convergence of $E_{\alpha_i } (x,P,V_i) - E_{\alpha_i} (x,P,V)$ uniformly with respect to $P$ and locally uniformly with respect to $x$. Indeed, the previous result (Proposition~\ref{key_point}) is given in some compact subset $\overline{\omega} \subset\subset \Omega$. Consequently, we define a local version of our energy:

\begin{dfn}
Let $\Omega \subset \R^n$ be an open set and $\omega \subset\subset \Omega$ be a relatively compact open subset. For every $d$--varifold $V$ in $\Omega$ and for every $x \in \omega$ and $P \in \G$, we define
\[
E_\alpha^\omega (x,P,V) = \int_{r=\alpha}^{\min \left( 1,\frac{ d(\overline{\omega},\Omega^c)  }{2}\right)} \frac{1}{r^d} \int_{B_r (x) \cap \omega} \left( \frac{d (y-x,P)}{r} \right)^2 \, d \V \, \frac{dr}{r}  \:  .
\]
\end{dfn}

\begin{remk}
Notice that
\begin{align*}
E_\alpha^\omega (x,P,V) & = \int_{r=\alpha}^{\min \left( 1,\frac{ d(\overline{\omega},\Omega^c)  }{2}\right)} \frac{1}{r^d} \int_{B_r (x) \cap \omega} \left( \frac{d (y-x,P)}{r} \right)^2 \, d \V \, \frac{dr}{r}  \\
& \leq \int_{r=\alpha}^1 \frac{1}{r^d} \int_{B_r (x) \cap \Omega} \left( \frac{d (y-x,P)}{r} \right)^2 \, d \V \, \frac{dr}{r} = E_\alpha (x,P,V) \: .
\end{align*}
\end{remk}

\begin{prop} \label{alpha_i_choice}
Let $(V_i)_i$ be a sequence of $d$--varifolds weakly---$\ast$ converging to a $d$--varifold $V$ in some open set $\Omega \subset \R^n$ and such that $\sup_i \|V_i \| (\Omega) < +\infty$. For all open subsets $\omega \subset\subset \Omega$ such that $\V (\partial \omega ) = 0$, let us define
\[
\eta_i^\omega := \sup \left\lbrace \left. \int_{r=0}^{\min \left( 1,\frac{ d(\overline{\omega},\Omega^c)  }{2}\right) } \left| \int_{B_r(x) \cap \omega} \phi \, d \|V_i\| - \int_{B_r(x) \cap \omega} \phi \, d \V \right| dr \: \right| \begin{array}{l}
\phi   \in \xLip_{2 (\diam \omega)^2} (\omega), \\
\| \phi \|_{\infty}  \leq (\diam \omega)^2
\end{array} , \: x \in \overline{\omega} \right\rbrace
\]
Then,
\begin{enumerate}
\item for every $0 < \alpha \leq 1$, $\displaystyle \sup_{\substack{ x \in \overline{\omega} \\  P \in \G }} \left| E_\alpha^\omega (x,P,V_i) - E_\alpha^\omega (x,P,V) \right| \leq \frac{\eta_i^\omega}{\alpha^{d+3}}$,
\item $\eta_i^\omega \xrightarrow[i \to \infty]{} 0$
\end{enumerate}
\end{prop}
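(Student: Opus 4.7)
The plan is to handle the two claims in turn. Claim (1) reduces to an elementary estimate on the weight $1/r^{d+3}$ followed by recognizing the resulting integrand as a test function in the class defining $\eta_i^\omega$; claim (2) is a direct application of Proposition~\ref{key_point} to the mass measures $\|V_i\|$.

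For claim (1), I would write
\[
E_\alpha^\omega(x,P,V_i) - E_\alpha^\omega(x,P,V) = \int_\alpha^{\min(1,\, d(\overline{\omega},\Omega^c)/2)} \frac{1}{r^{d+3}} \left( \int_{B_r(x) \cap \omega} \phi_{x,P}(y) \, d(\|V_i\| - \V)(y) \right) dr,
\]
with $\phi_{x,P}(y) := d(y-x,P)^2$. Since $r \geq \alpha$, the factor $1/r^{d+3}$ is bounded by $1/\alpha^{d+3}$ and can be pulled out; taking absolute values inside and extending the $r$-integration down to $0$ (the integrand is non-negative) yields
\[
|E_\alpha^\omega(x,P,V_i) - E_\alpha^\omega(x,P,V)| \leq \frac{1}{\alpha^{d+3}} \int_0^{\min(1,\, d(\overline{\omega},\Omega^c)/2)} \left| \int_{B_r(x) \cap \omega} \phi_{x,P} \, d(\|V_i\| - \V) \right| dr.
\]
It remains to verify that $\phi_{x,P}$ belongs to the class of test functions used in $\eta_i^\omega$. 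For $x \in \overline{\omega}$ and $y \in \omega$, $d(y-x,P) \leq |y-x| \leq \diam \omega$ gives $\|\phi_{x,P}\|_\infty \leq (\diam \omega)^2$; the identity $|a^2-b^2| = |a-b|(a+b)$ combined with the $1$-Lipschitz character of $y \mapsto d(y-x,P)$ shows $\phi_{x,P}$ is $2 \diam \omega$-Lipschitz, hence lies in $\xLip_{2 (\diam \omega)^2}(\omega)$ (up to the trivial reduction $\diam \omega \geq 1$, which one arranges by slightly enlarging $\omega$). Taking the supremum over $(x,P) \in \overline{\omega} \times \G$ then bounds the left-hand side uniformly by $\eta_i^\omega / \alpha^{d+3}$, which is claim (1).

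For claim (2), I would first observe that the weak-$\ast$ convergence $V_i \to V$ on $\Omega \times \G$ forces weak-$\ast$ convergence of the mass measures $\|V_i\| \to \V$ on $\Omega$: testing against $\varphi(x) \cdot 1(P)$ for $\varphi \in \xC_c(\Omega)$ is legitimate because $\G$ is compact, so $\varphi \otimes 1$ is compactly supported in $\Omega \times \G$. Combined with the hypotheses $\sup_i \|V_i\|(\Omega) < +\infty$ and $\V(\partial \omega)=0$, Proposition~\ref{key_point} applied to $(\|V_i\|)_i$ and $\V$ with constants $k = 2(\diam \omega)^2$ and $C = (\diam \omega)^2$ gives $\Delta_\omega^{k,C}(\|V_i\|, \V) \to 0$; since $\eta_i^\omega$ is by definition exactly $\Delta_\omega^{k,C}(\|V_i\|, \V)$, the desired convergence follows. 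No genuine obstacle arises here: the only delicate point is the constant-chasing in (1), which must be done carefully so that $\phi_{x,P}$ fits precisely into the class defining $\eta_i^\omega$, allowing Proposition~\ref{key_point} to be invoked as a black box.
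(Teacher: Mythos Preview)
Your proof is correct and follows the same route as the paper: verify that $\phi_{x,P}(y)=d(y-x,P)^2$ lies in the admissible Lipschitz class so that the $r$--integral is dominated by $\eta_i^\omega$, then invoke Proposition~\ref{key_point} on the mass measures for the convergence $\eta_i^\omega\to 0$. You are in fact slightly more careful than the paper, which records $\phi_{x,P}\in\xLip_{2\diam\omega}(\omega)$ without commenting on the mismatch with the class $\xLip_{2(\diam\omega)^2}(\omega)$ when $\diam\omega<1$; your remark that one may enlarge $\omega$ (or, equivalently, adjust the Lipschitz constant in the definition of $\eta_i^\omega$) disposes of this harmlessly.
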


\begin{proof}
$1.$ is a direct application of Proposition~\ref{key_point}, since $\|V_i\|$ weakly--$\ast$ converges to $\V$. Now let us estimate
\begin{align*}
\left| E_\alpha^\omega \right. & \left. (x,P,V_i)  - E_\alpha^\omega (x,P,V) \right| \\
& \leq \frac{1}{\alpha^{d+3}} \int_{r=0}^{\min \left( 1,\frac{ d(\overline{\omega},\Omega^c)  }{2}\right) } \left| \int_{B_r (x) \cap \omega} d(y-x,P)^2  d \|V_i\| (y) - \int_{B_r (x) \cap \omega} d(y-x,P)^2 d \V (y) \right| \, dr \: .
\end{align*}
For all $x \in \overline{\omega}, \, P \in \G$, let $\varphi_{x,P} (y) :=  d(y-x,P)^2$. One can check that
\begin{enumerate}[$(1)$]
\item $\varphi_{x,P}$ is bounded in $\omega$ by $(\diam \omega)^2$ indeed $\varphi_{x,P} (y) \leq |y - x |^2 \leq (\diam \omega)^2$,
\item $\varphi_{x,P} \in \xLip_{2 (\diam \omega)} (\omega)$ indeed
\begin{align*}
\left| \varphi_{x,P} (y) - \varphi_{x,P} (z) \right| & = \left| d(y-x,P)^2 - d(z-x,P)^2 \right| \\
                                                     & \leq 2 (\diam \omega) \left| d(y-x,P) - d(z-x,P)\right| \\
                                                     & \leq 2 (\diam \omega) \, d(y-z,P) \leq 2 (\diam \omega) \, |y-z| \: .
\end{align*}
\end{enumerate}
Consequently,
\[
\sup_{\substack{ x \in \overline{\omega} \\  P \in \G }} \int_{r=0}^{\min \left( 1,\frac{ d(\overline{\omega},\Omega^c)  }{2}\right) } \left| \int_{B_r (x) \cap \omega} d(y-x,P)^2 d \|V_i\| (y) - \int_{B_r (x) \cap \omega} d(y-x,P)^2 d \V (y) \right| \, dr \leq \eta_i^\omega
\]
and thus,
\[
\sup_{\substack{ x \in \overline{\omega} \\  P \in \G }} \left| E_\alpha^\omega (x,P,V_i) - E_\alpha^\omega (x,P,V) \right| \leq \frac{\eta_i^\omega}{\alpha^{d+3}} \: .
\]
\end{proof}

\noindent It is now easy to deduce the following fact:

\begin{prop} \label{uniform_CV}
Let $(V_i)_i$ be a sequence of $d$--varifolds weakly--$\ast$ converging to a $d$--varifold $V$ in some open set $\Omega \subset \R^n$, and let $\omega \subset\subset \Omega$ be such that $\V (\partial \omega ) = 0$. Assume that $\sup_i \|V_i\|(\Omega) < +\infty$, then, there exists a decreasing sequence $(\alpha_i)_i$ of positive numbers tending to $0$ and such that
\begin{equation} \label{uniform_convergence_1}
\sup_{\substack{ x \in \overline{\omega} \\  P \in \G }} \left| E_{\alpha_i}^\omega (x,P,V_i) - E_{\alpha_i}^\omega (x,P,V) \right|  \xrightarrow[i \to +\infty]{} 0 \: ,
\end{equation}
and for every $x \in \omega$, $P \in \G$, the following pointwise limit holds
\begin{equation} \label{pointwise_convergence_1}
E_0^\omega (x,P,V) = \lim_{i \to \infty} E_{\alpha_i}^\omega (x,P,V_i) \: .
\end{equation}
Conversely, given a decreasing sequence $(\alpha_i)_i$ of positive numbers tending to $0$, there exists an extraction $\phi$ (depending on $\alpha_i$, $V_i$ but independent of $x \in \omega$ and $P \in \G$) such that
\begin{equation}  \label{uniform_convergence_2}
\sup_{\substack{ x \in \overline{\omega} \\  P \in \G }} \left| E_{\alpha_i}^\omega (x,P,V_{\phi(i)}) - E_{\alpha_i}^\omega (x,P,V) \right| \xrightarrow[i \to +\infty]{} 0 \: ,
\end{equation}
and again for every $x \in \omega$, $P \in \G$, the following pointwise limit holds
\begin{equation} \label{pointwise_convergence_2}
E_0^\omega (x,P,V) = \lim_{i \to \infty} E_{\alpha_i}^\omega (x,P,V_{\phi(i)}) \: .
\end{equation}
\end{prop}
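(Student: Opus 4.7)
The proof reduces essentially to a diagonal choice of scales, built on top of Proposition~\ref{alpha_i_choice}. That proposition already supplies the quantitative estimate
\[
\sup_{x\in\overline{\omega},\,P\in\G}\bigl|E_\alpha^\omega(x,P,V_i)-E_\alpha^\omega(x,P,V)\bigr|\;\leq\;\frac{\eta_i^\omega}{\alpha^{d+3}},\qquad \eta_i^\omega\xrightarrow[i\to\infty]{}0,
\]
so the game is only to tune how fast $\alpha$ is allowed to shrink relative to the (non-quantified) rate of $\eta_i^\omega\to 0$, and then to transfer the uniform convergence into pointwise convergence by a trivial decomposition.

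For part (1), I would first replace $\eta_i^\omega$ by its monotone majorant $\tilde\eta_i^\omega:=\sup_{j\geq i}\eta_j^\omega$, which still tends to $0$ but is nonincreasing, and then set
\[
\alpha_i:=\max\!\bigl((\tilde\eta_i^\omega)^{1/(2(d+3))},\,1/i\bigr).
\]
This is positive, decreasing, and tends to $0$, and Proposition~\ref{alpha_i_choice} gives
\[
\sup_{x\in\overline{\omega},\,P\in\G}\bigl|E_{\alpha_i}^\omega(x,P,V_i)-E_{\alpha_i}^\omega(x,P,V)\bigr|\;\leq\;\frac{\eta_i^\omega}{\alpha_i^{d+3}}\;\leq\;(\tilde\eta_i^\omega)^{1/2}\;\xrightarrow[i\to\infty]{}\;0,
\]
which is (\ref{uniform_convergence_1}). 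For the pointwise statement (\ref{pointwise_convergence_1}), I split
\[
E_{\alpha_i}^\omega(x,P,V_i)\;=\;\bigl(E_{\alpha_i}^\omega(x,P,V_i)-E_{\alpha_i}^\omega(x,P,V)\bigr)\;+\;E_{\alpha_i}^\omega(x,P,V).
\]
The first summand tends to $0$ by the uniform bound just obtained, while the second summand tends to $E_0^\omega(x,P,V)$ (possibly $+\infty$) by monotone convergence: the integrand in the definition of $E_\alpha^\omega$ is nonnegative and the integration domain $[\alpha,\min(1,d(\overline{\omega},\Omega^c)/2)]$ in $r$ increases monotonically to $[0,\min(1,d(\overline{\omega},\Omega^c)/2)]$ as $\alpha_i\downarrow 0$.

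For part (2) the sequence $(\alpha_i)_i$ is now imposed, so the freedom lies in extracting a subsequence of $(V_i)_i$. Using $\eta_j^\omega\to 0$ I construct $\phi:\N\to\N$ strictly increasing such that $\eta_{\phi(i)}^\omega\leq \alpha_i^{d+4}$ for every $i$; again Proposition~\ref{alpha_i_choice} yields
\[
\sup_{x\in\overline{\omega},\,P\in\G}\bigl|E_{\alpha_i}^\omega(x,P,V_{\phi(i)})-E_{\alpha_i}^\omega(x,P,V)\bigr|\;\leq\;\frac{\eta_{\phi(i)}^\omega}{\alpha_i^{d+3}}\;\leq\;\alpha_i\;\xrightarrow[i\to\infty]{}\;0,
\]
which is (\ref{uniform_convergence_2}), and the pointwise limit (\ref{pointwise_convergence_2}) follows by the same decomposition and monotone-convergence argument as in part (1). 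There is no genuine obstacle here since all the analytic content was absorbed into Proposition~\ref{alpha_i_choice}; the only minor care points are enforcing monotonicity of $(\alpha_i)_i$ in part (1), handled by the running supremum $\tilde\eta_i^\omega$, and allowing the pointwise limit to be $+\infty$, handled by the fact that the monotone-convergence argument is insensitive to finiteness.
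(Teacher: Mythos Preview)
Your proof is correct and follows essentially the same approach as the paper's: both rely on Proposition~\ref{alpha_i_choice} to obtain the bound $\eta_i^\omega/\alpha^{d+3}$, then choose $\alpha_i$ (respectively an extraction $\phi$) so that this ratio tends to $0$, and finish the pointwise statements by splitting $E_{\alpha_i}^\omega(x,P,V_i)$ and invoking monotone convergence of $E_\alpha^\omega(x,P,V)\to E_0^\omega(x,P,V)$. The only difference is that you spell out explicit formulas for $\alpha_i$ and $\phi$ (and handle the monotonicity of $(\alpha_i)_i$ via the running supremum $\tilde\eta_i^\omega$), whereas the paper simply asserts the existence of such choices without giving formulas.
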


\begin{proof}
Thanks to Proposition~\ref{alpha_i_choice}, for every $\alpha > 0$,
\[
\sup_{\substack{ x \in \overline{\omega} \\  P \in \G }} \left| E_\alpha^\omega (x,P,V_i) - E_\alpha^\omega (x,P,V) \right| \leq \frac{\eta_i^\omega}{\alpha^{d+3}} \: \text{ and } \eta_i^\omega \xrightarrow[i \to \infty]{} 0 \: ,
\]
hence we can choose $(\alpha_i)_i$ such that $\displaystyle \frac{\eta_i^\omega}{\alpha_i^{d+3}} \xrightarrow[i \to \infty]{} 0$.
Conversely, given the sequence $(\alpha_i)_i$ tending to $0$, we can extract a subsequence $(\eta_{\phi(i)}^\omega)_i$ such that $\displaystyle \frac{\eta_{\phi(i)}^\omega}{\alpha_i^{d+3}} \xrightarrow[i \to \infty]{} 0$. For fixed $x \in \omega$ and $P \in \G$, the pointwise convergences to the averaged height excess energy $E_0^\omega$,  $(\ref{pointwise_convergence_1})$ and $(\ref{pointwise_convergence_2})$, are a consequence of the previous convergence properties $(\ref{uniform_convergence_1})$ and $(\ref{uniform_convergence_2})$, and of the monotone convergence $E_\alpha^\omega (x,P,V) \xrightarrow[\alpha \to 0]{} E_0^\omega (x,P,V)$.
\end{proof}

Now, we can use this uniform convergence result in $\omega \times \G$ to deduce the convergence of the integrated energies.

\begin{prop} \label{convergence_integrated_energies}
Let $\Omega \subset \R^n$ be an open set and let $(V_i)_i$ be a sequence of $d$--varifolds in $\Omega$ weakly--$\ast$ converging to some $d$--varifold $V$ and such that $\sup_i \| V_i \| (\Omega) < +\infty$.  Fix a decreasing sequence $(\alpha_i)_i$ of positive numbers tending to $0$. Let $\omega \subset \subset \Omega$ with $\V(\partial \omega) =0$. Then there exists an extraction $\psi$ such that
\[
\int_{\omega \times \G} E_0^\omega (x,P,V) \, d V(x,P) = \lim_{i \to \infty}  \int_{\omega \times \G} E_{\alpha_i}^\omega (x,P,V_{\psi(i)}) \, d V_{\psi(i)}(x,P)  \: . 
\]
\end{prop}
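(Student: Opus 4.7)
The plan is to couple the uniform replacement provided by Proposition \ref{uniform_CV} with a diagonal extraction balancing the two limits $\alpha_i \downarrow 0$ and $V_{\phi(i)} \rightharpoonup V$. First I would apply the converse direction of Proposition \ref{uniform_CV} to the prescribed sequence $(\alpha_i)_i$, obtaining an initial extraction $\phi$ with
\[
\delta_i := \sup_{(x,P) \in \overline{\omega} \times \G} \left| E_{\alpha_i}^\omega(x,P,V_{\phi(i)}) - E_{\alpha_i}^\omega(x,P,V) \right| \xrightarrow[i \to \infty]{} 0.
\]
Since $\sup_i \|V_i\|(\Omega) < +\infty$, this reduces the problem to showing that, after possibly a further extraction,
\[
\int_{\omega \times \G} E_{\alpha_i}^\omega(x,P,V) \, dV_{\phi(i)}(x,P) \xrightarrow[i \to \infty]{} \int_{\omega \times \G} E_0^\omega(x,P,V) \, dV(x,P),
\]
where now the integrand involves only the fixed limit varifold $V$ and only the outer measure depends on $i$.

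Next I would exploit that for every fixed $\alpha > 0$ the function $(x,P) \mapsto E_\alpha^\omega(x,P,V)$ is bounded on $\overline{\omega} \times \G$ and continuous at $V$-almost every point: a dominated convergence argument using that for each $x$ the set of radii $r$ with $\V(\partial B_r(x)) > 0$ is at most countable (Proposition \ref{boundary_weight}) yields continuity in $(x,P)$ on $\omega \times \G$, and the hypothesis $\V(\partial \omega) = 0$ handles the boundary. Hence the weak-$\ast$ convergence $V_{\phi(i)} \rightharpoonup V$ gives $\int E_\alpha^\omega(\cdot,V) \, dV_{\phi(i)} \to \int E_\alpha^\omega(\cdot,V) \, dV$ for each fixed $\alpha > 0$, and the monotone convergence theorem yields $\int E_\alpha^\omega(\cdot,V) \, dV \nearrow \int E_0^\omega(\cdot,V) \, dV$ in $[0,+\infty]$ as $\alpha \downarrow 0$. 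A diagonal extraction now picks $\alpha^{(k)} \downarrow 0$ with $\int E_{\alpha^{(k)}}^\omega \, dV$ within $1/k$ of $\int E_0^\omega \, dV$ (or at least $k$ in the infinite case), then $\psi(k)$ large enough that $\alpha_{\psi(k)} \leq \alpha^{(k)}$, $\delta_{\psi(k)} \leq 1/k$, and $\left| \int E_{\alpha^{(k)}}^\omega \, dV_{\phi \circ \psi(k)} - \int E_{\alpha^{(k)}}^\omega \, dV \right| \leq 1/k$. Combined with the monotonicity $E_{\alpha_{\psi(k)}}^\omega \geq E_{\alpha^{(k)}}^\omega$, this immediately gives $\liminf_k \int E_{\alpha_{\psi(k)}}^\omega(\cdot,V) \, dV_{\phi\circ\psi(k)} \geq \int E_0^\omega \, dV$.

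The main obstacle is the matching upper bound, because $E_0^\omega(\cdot,V)$ is only lower semicontinuous in $(x,P)$, so the crude estimate $\int E_{\alpha_{\psi(k)}}^\omega(\cdot,V) \, dV_{\phi\circ\psi(k)} \leq \int E_0^\omega(\cdot,V) \, dV_{\phi\circ\psi(k)}$ cannot be combined with weak-$\ast$ convergence in the desired direction. To overcome this I would use a further Fubini decomposition, writing
\[
\int [E_{\alpha_{\psi(k)}}^\omega - E_{\alpha^{(k)}}^\omega](\cdot,V) \, dV_{\phi\circ\psi(k)} = \int_{\alpha_{\psi(k)}}^{\alpha^{(k)}} \frac{1}{r^{d+3}} \int_{\omega \times \G} g_r(x,P) \, dV_{\phi\circ\psi(k)}(x,P) \, dr
\]
with $g_r(x,P) = \int_{B_r(x) \cap \omega} d(y-x,P)^2 \, d\V(y)$, which for almost every $r > 0$ is bounded and continuous at $V$-a.e. $(x,P)$ by the same argument as above. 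Tightening the diagonal choice of $\psi(k)$ makes this singular tail small in the finite case $\int E_0^\omega \, dV < +\infty$ via the monotone decay $\int_0^{\alpha^{(k)}} r^{-(d+3)} \int g_r \, dV \, dr \to 0$; the infinite case is handled automatically by the liminf bound, which already forces the limit to be $+\infty$. The resulting extraction $\psi$ then realizes the claimed equality.
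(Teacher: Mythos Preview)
Your approach shares the key ingredients with the paper --- the initial extraction via Proposition~\ref{uniform_CV}, the Fubini representation through $g_r(x,P) = \int_{B_r(x)\cap\omega} d(y-x,P)^2\,d\V(y)$, and a diagonal choice balancing $\alpha_i^{-(d+3)}$ against a vanishing error --- but you take a more circuitous route by introducing auxiliary levels $\alpha^{(k)}$ and splitting into a $\liminf$/$\limsup$ argument. The paper instead bounds directly
\[
\left| \int_{\omega\times\G} E_{\alpha_i}^\omega(\cdot,\cdot,V)\,dV_{\psi(i)} - \int_{\omega\times\G} E_{\alpha_i}^\omega(\cdot,\cdot,V)\,dV \right| \leq \frac{1}{\alpha_i^{d+3}} \int_0^{c_\omega} \left| \int_{\omega\times\G} g_r\,dV_{\psi(i)} - \int_{\omega\times\G} g_r\,dV \right| dr,
\]
observes that the $r$--integral tends to $0$ by dominated convergence (since $g_r$ is bounded and $V$--a.e.\ continuous for a.e.\ $r$), and then simply chooses $\psi(i)$ large enough, relative to the \emph{fixed} $\alpha_i$, that the product vanishes; monotone convergence of $\int E_{\alpha_i}^\omega(\cdot,\cdot,V)\,dV$ to $\int E_0^\omega(\cdot,\cdot,V)\,dV$ finishes the proof. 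No intermediate levels, no $\limsup$ struggle.

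There is a genuine gap in your $\limsup$ step. You need the tail
\[
\int_{\alpha_{\psi(k)}}^{\alpha^{(k)}} \frac{1}{r^{d+3}} \int_{\omega\times\G} g_r\,dV_{\phi\circ\psi(k)}\,dr
\]
to be small, but the ``monotone decay'' you invoke, namely $\int_0^{\alpha^{(k)}} r^{-(d+3)} \int g_r\,dV\,dr \to 0$, concerns the \emph{limit} measure $V$, not $V_{\phi\circ\psi(k)}$. The only a priori bound on $\int g_r\,dV_{\phi\circ\psi(k)}$ is of order $r^2$, which after division by $r^{d+3}$ is not integrable at $0$; nothing prevents the tail from blowing up as $\alpha_{\psi(k)} \downarrow 0$ unless you first compare $\int g_r\,dV_{\phi\circ\psi(k)}$ with $\int g_r\,dV$ \emph{uniformly against the singular weight}. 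Doing precisely that is the content of the paper's Step~2: one must choose the extraction so that $\alpha_i^{-(d+3)}$ times the $L^1(dr)$--distance between $\int g_r\,dV_{\phi(j)}$ and $\int g_r\,dV$ tends to $0$ (possible because only the index of $V$, not of $\alpha$, is being extracted). Once you insert this, the auxiliary levels $\alpha^{(k)}$ and the $\liminf$/$\limsup$ split become superfluous and you recover the paper's direct argument.
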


\begin{proof}
\begin{enumerate}[$-$]
\item Step $1$: Let $(\alpha_i)_i \downarrow 0$ and $V_i \xrightharpoonup[i \to \infty]{\ast} V$. Thanks to Proposition~\ref{uniform_CV}), there exists an extraction $\phi$ such that
\[
\sup_{\substack{ x \in \overline{\omega} \\  P \in \G }} \left| E_{\alpha_i}^\omega (x,P,V_{\phi(i)}) - E_{\alpha_i}^\omega (x,P,V) \right| \xrightarrow[i \to \infty]{} 0 \: .
\]
But $\sup_i V_{\phi(i)} (\omega \times \G) \leq \sup_i \|V_i\|(\Omega) < +\infty$, hence
\begin{equation} \label{energy_convergence_1}
\left| \int_{\omega \times \G} E_{\alpha_i}^\omega (x,P,V_{\phi(i)}) \, d V_{\phi(i)} (x, P) - \int_{\omega \times \G} E_{\alpha_i}^\omega (x,P,V) \, d V_{\phi(i)} (x,P) \right| \xrightarrow[i \to \infty]{} 0 \: .
\end{equation}
\item Step $2$: Now, we estimate
\begin{align*}
& \left| \int_{\omega \times \G} E_{\alpha_i}^\omega (x,P,V) \, d V_{\phi(i)} (x, P) - \int_{\omega \times \G} E_{\alpha_i}^\omega (x,P,V) \, d V (x,P) \right| \\
& \leq \int_{r=\alpha_i}^{\min \left( 1 , \frac{d(\overline{\omega}, \Omega^c)}{2} \right)}  \frac{1}{r^{d+1}} \left| \int_{\omega \times \G} \int_{B_r(x) \cap \omega} \left( \frac{d(y-x,P)}{r} \right)^2 \, d \V(y) \, d V_{\phi(i)} (x,P) \right. \\
& \left. - \int_{\omega \times \G} \int_{B_r(x) \cap \omega} \left( \frac{d(y-x,P)}{r} \right)^2 \, d \V(y) \, d V (x,P) \right|  \, dr \\
& \leq \frac{1}{\alpha_i^{d+3}} \int_{r=\alpha_i}^{\min \left( 1 , \frac{d(\overline{\omega}, \Omega^c)}{2} \right)}   \left| \int_{\omega \times \G} g_r(x,P) d V_{\phi(i)} (x,P) - \int_{\omega \times \G} g_r (x,P) \, d V (x,P) \right|  \, dr \: ,
\end{align*}
with $g_r(x,P) = \displaystyle \int_{B_r(x) \cap \omega} d(y-x,P)^2 \, d \V(y)$. For every $\displaystyle r < \min \left( 1 , \frac{d(\overline{\omega}, \Omega^c)}{2} \right)$, $g_r$ is bounded by $1$. Moreover the set of discontinuities of $g_r$, denoted by $\text{disc} (g_r)$, satisfies
\begin{align*}
\text{disc} (g_r) & \subset \left\lbrace (x,P) \in \omega \times \G \: : \: \V (\partial (B_r(x) \cap \omega)) > 0 \right\rbrace \\
& \subset \left\lbrace (x,P) \in \omega \times \G \: : \: \V (\partial B_r(x)) > 0 \right\rbrace \: .
\end{align*}
Hence $V (\text{disc} (g_r)) \leq \V \left( \left\lbrace x \in \omega \: : \: \V (\partial B_r(x)) > 0 \right\rbrace \right) = 0$ for almost every $r$ by Proposition~\ref{boundary_weight}. Consequently,
\[
\left| \int_{\omega \times \G} g_r(x,P) d V_{\phi(i)} (x,P) - \int_{\omega \times \G} g_r (x,P) \, d V (x,P) \right| \xrightarrow[i \to \infty]{} 0 \quad \text{for a.e. } r \: ,
\]
and then by dominated convergence,
\[
\int_{r=0}^{\min \left( 1 , \frac{d(\overline{\omega}, \Omega^c)}{2} \right)}   \left| \int_{\omega \times \G} g_r(x,P) d V_{\phi(i)} (x,P) - \int_{\omega \times \G} g_r (x,P) \, d V (x,P) \right|  \, dr \xrightarrow[i \to \infty]{} 0 \: .
\]
It is then possible to extract, again, a subsequence $(V_{\psi(i)})_i$ such that
\begin{equation} \label{energy_convergence_2}
\left| \int_{\omega \times \G} E_{\alpha_i}^\omega (x,P,V) \, d V_{\psi(i)} (x, P) - \int_{\omega \times \G} E_{\alpha_i}^\omega (x,P,V) \, d V (x,P) \right| \xrightarrow[i \to \infty]{} 0 \: .
\end{equation}
\item Step $3$: Eventually by $(\ref{energy_convergence_1})$, $(\ref{energy_convergence_2})$ and monotone convergence, there exists an extraction $\psi$ such that
\begin{align*}
\int_{\omega \times \G} E_0^\omega (x,P,V) \, dV(x,P) &  = \lim_{i \to \infty} \int_{\omega \times \G} E_{\alpha_i}^\omega (x,P,V) \, dV(x,P) \\
& = \lim_{i \to \infty} \int_{\omega \times \G} E_{\alpha_i}^\omega (x,P,V_{\psi(i)}) \, dV_{\psi(i)}(x,P) \: .
\end{align*}
\end{enumerate}
\end{proof}


\subsection{Rectifiability theorem}

We can now state the main result.

\begin{theo}\label{dynamic_theorem}
Let $\Omega \subset \R^n$ be an open set and let $(V_i)_i$ be a sequence of $d$--varifolds in $\Omega$ weakly--$\ast$ converging to some $d$--varifold $V$ and such that $\sup_i \| V_i \| (\Omega) < +\infty$.  Fix $(\alpha_i)_i$ and $(\beta_i)_i$ decreasing sequences of positive numbers tending to $0$ and assume that:
\begin{enumerate}[(i)]
\item there exist $0 < C_1 < C_2$ such that for $\| V_i \|$--almost every $x \in \Omega$ and for every $\beta_i < r < d(x,\Omega^c)$,
\begin{equation} \label{dynamic_density_condition}
C_1 \omega_d r^d \leq \| V_i \| (B_r(x)) \leq C_2 \omega_d r^d \: ,
\end{equation}
\item \begin{equation} \label{dynamic_jones_condition}
\sup_i \int_{\Omega \times \G} E_{\alpha_i} (x,P,V_i) \, d V_i(x,P) < +\infty \: .
\end{equation}
\end{enumerate}
Then $V$ is a rectifiable $d$--varifold.
\end{theo}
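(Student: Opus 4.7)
The plan is to reduce to the static Theorem~\ref{static_thm} by deriving both of its hypotheses for the limit varifold $V$ from the uniform assumptions~(i)--(ii) on $(V_i)_i$ and the weak--$\ast$ convergence $V_i \xrightharpoonup{\ast} V$.

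For the density estimate, I observe that weak--$\ast$ convergence of varifolds in $\Omega \times \G$ implies weak--$\ast$ convergence of masses $\|V_i\| \xrightharpoonup{\ast} \|V\|$ in $\Omega$ (test against functions of $x$ alone). Condition~(i) is then exactly the hypothesis of Proposition~\ref{density_1} with $\mu_i = \|V_i\|$ and $\mu = \|V\|$, yielding, for $\|V\|$--a.e.\ $x \in \Omega$ and every $0 < r < d(x, \Omega^c)$,
\[
C_1 \omega_d r^d \leq \|V\|(B_r(x)) \leq C_2 \omega_d r^d,
\]
which is the first hypothesis of Theorem~\ref{static_thm}.

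For the energy condition, set $M := \sup_i \int_{\Omega \times \G} E_{\alpha_i}(x,P,V_i) \, dV_i < +\infty$ and fix an open set $\omega \subset\subset \Omega$ with $\|V\|(\partial \omega)=0$. Proposition~\ref{convergence_integrated_energies} yields an extraction $\psi$ (depending on $\omega$) such that
\[
\int_{\omega \times \G} E_0^\omega(x,P,V) \, dV(x,P) = \lim_{i \to \infty} \int_{\omega \times \G} E_{\alpha_i}^\omega(x,P,V_{\psi(i)}) \, dV_{\psi(i)}(x,P).
\]
Since $E_\alpha^\omega \leq E_\alpha$ pointwise, and since $(\alpha_i)_i$ is decreasing with $\psi(i) \geq i$ so that $\alpha_{\psi(i)} \leq \alpha_i$ and hence $E_{\alpha_i}(\cdot,\cdot,V_{\psi(i)}) \leq E_{\alpha_{\psi(i)}}(\cdot,\cdot,V_{\psi(i)})$, every term on the right is bounded by $M$. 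Therefore $\int_{\omega \times \G} E_0^\omega \, dV \leq M$, so $E_0^\omega(x,P,V) < +\infty$ for $V$--a.e.\ $(x,P) \in \omega \times \G$.

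To pass from $E_0^\omega$ to $E_0$, I use the Ahlfors upper bound just obtained: for every fixed $r_0 > 0$,
\[
\int_{r_0}^{1} \frac{1}{r^{d+1}} \int_{B_r(x) \cap \Omega} \left(\frac{d(y-x,P)}{r}\right)^2 d\|V\|(y)\, dr \leq C_2 \omega_d \log(1/r_0),
\]
so the large--$r$ tail of $E_0$ is always finite. For $x$ in the interior of $\omega$ at distance larger than $r_0$ from $\partial \omega$, with $r_0 < d(\overline{\omega}, \Omega^c)/2$, one has $B_r(x) \cap \omega = B_r(x) \cap \Omega = B_r(x)$ for $r \leq r_0$, so the small--$r$ parts of $E_0(x,P,V)$ and $E_0^\omega(x,P,V)$ coincide. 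Hence $E_0^\omega(x,P,V) < +\infty$ forces $E_0(x,P,V) < +\infty$, and $V(\{E_0 = +\infty\} \cap (\omega \times \G)) = 0$. Taking an exhaustion $\omega_n \nearrow \Omega$ with $\|V\|(\partial \omega_n) = 0$ given by Lemma~\ref{lemma_rectif_2} and using continuity of $V$ from below, $V(\{(x,P) \in \Omega \times \G : E_0(x,P,V) = +\infty\}) = 0$. Both hypotheses of Theorem~\ref{static_thm} are then in force and yield the rectifiability of $V$. The main obstacle is not this final assembly but the middle step: since $E_{\alpha_i}(x,P,V_i)$ involves indicator functions of balls, it is not continuous in $V_i$ for the weak--$\ast$ topology, which forces the introduction of the localized energies $E_\alpha^\omega$, the stronger uniform convergence of Proposition~\ref{uniform_CV} (via Ascoli applied to equibounded, equi--Lipschitz test functions), and the extraction $\psi$ in Proposition~\ref{convergence_integrated_energies} that ties the speed of $\alpha_i \to 0$ to the speed of weak--$\ast$ convergence of $V_i$ to $V$.
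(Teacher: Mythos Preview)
Your proof is correct and follows essentially the same three-step strategy as the paper: pass the density estimate to the limit via Proposition~\ref{density_1}, use Proposition~\ref{convergence_integrated_energies} together with the monotonicity $E_{\alpha_i}^\omega \leq E_{\alpha_{\psi(i)}}^\omega \leq E_{\alpha_{\psi(i)}}$ to bound $\int_{\omega\times\G} E_0^\omega\, dV$, and then argue that $E_0^\omega(x,P,V)$ and $E_0(x,P,V)$ are simultaneously finite before exhausting $\Omega$ via Lemma~\ref{lemma_rectif_2}.

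One small imprecision: your tail bound $\int_{r_0}^1 r^{-d-1}\|V\|(B_r(x))\,dr \leq C_2\omega_d\log(1/r_0)$ relies on the Ahlfors upper estimate, which you have only established for $r < d(x,\Omega^c)$; when $d(x,\Omega^c)<1$ this does not cover the full range. The paper sidesteps this by using the trivial bound $\|V\|(B_r(x))\leq \|V\|(\Omega)<+\infty$, which gives directly
\[
\big|E_0(x,P,V)-E_0^\omega(x,P,V)\big|\leq \left(\Big(\tfrac{2}{d(\overline{\omega},\Omega^c)}\Big)^{d+1}+\Big(\tfrac{1}{d(x,\omega^c)}\Big)^{d+1}\right)\|V\|(\Omega)<+\infty.
\]
This is the same idea as yours (the singularity of $E_0$ lives entirely at $r=0$, where the two energies coincide for $x$ well inside $\omega$), just with a cleaner bound that avoids any restriction on $r$.
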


\begin{proof}
The point is to see that these two assumptions $(\ref{dynamic_density_condition})$ and $(\ref{dynamic_jones_condition})$ actually imply the assumptions of the static theorem (Theorem~\ref{static_theorem}) for the limit varifold $V$.
\begin{enumerate}[$-$]
\item Step $1$: The first assumption $(\ref{dynamic_density_condition})$ and Proposition~\ref{density_1} lead to the first assumption of the static theorem: there exist $0 < C_1 < C_2$ such that for $\V$--almost every $x \in \Omega$ and for every $0 < r < d(x,\Omega^c)$,
\[
C_1 \omega_d r^d \leq \| V \| (B_r(x)) \leq C_2 \omega_d r^d \: .
\]
\item Step $2$: Let $\omega \subset\subset \Omega$ be a relatively compact open subset such that $\V(\partial \omega) = 0$ then, thanks to Proposition~\ref{convergence_integrated_energies}, we know that there exists some extraction $\phi$ such that
\begin{equation} \label{integrated_energy_convergence_1}
\int_{\omega \times \G} E_0^\omega (x,P,V) \, d V(x,P) = \lim_{i \to \infty}  \int_{\omega \times \G} E_{\alpha_i}^\omega (x,P,V_{\phi(i)}) \, d V_{\phi(i)}(x,P)  \: . 
\end{equation}
But $E_\alpha^\omega$ is decreasing in $\alpha$ and $\alpha_{\phi(i)} \leq  \alpha_i$, therefore for every $(x,P) \in \omega \times \G$,
\[
E_{\alpha_i}^\omega (x,P,V_{\phi(i)}) \leq E_{\alpha_{\phi(i)}}^\omega (x,P,V_{\phi(i)}) \: ,
\]
hence
\begin{equation} \label{integrated_energy_convergence_2}
\sup_i \int_{\omega \times \G} E_{\alpha_i}^\omega (x,P,V_{\phi(i)}) \, d V_{\phi(i)}(x,P) \leq \sup_i \int_{\omega \times \G} E_{\alpha_{\phi(i)}}^\omega (x,P,V_{\phi(i)}) \, d V_{\phi(i)}(x,P)  \: .
\end{equation}
Moreover, recall that $E_{\alpha_i}^\omega (x , P,V_i) \leq E_{\alpha_i} (x , P,V_i) $ and thus
\begin{equation} \label{integrated_energy_convergence_3}
\sup_i \int_{\omega \times \G} E_{\alpha_i}^\omega (x , P,V_i) \, d V_i(x,P) \leq \sup_i \int_{\Omega \times \G} E_{\alpha_i} (x , P,V_i) \, d V_i(x,P) \leq C \: .
\end{equation}
Eventually, by $(\ref{integrated_energy_convergence_1})$, $(\ref{integrated_energy_convergence_2})$ and $(\ref{integrated_energy_convergence_3})$,
\begin{equation} \label{integrated_energy_finiteness}
\int_{\omega \times \G} E_0^\omega (x,P,V) \, d V(x,P) \leq C  \: . 
\end{equation}
\item Step $3$: By $(\ref{integrated_energy_finiteness})$, for every $\omega \subset\subset \Omega$ such that $\V(\partial \omega) = 0$ we get that 
\[
V \left( \left\lbrace (x,P) \in \omega \times \G \: | \: E_0^\omega (x,P,V) = +\infty \right\rbrace \right) = 0 \: .
\]
At the same time, for $x \in \omega$ and $P \in \G$,
\begin{align*}
\left| E_0 (x,P,V)\right. & \left.  - E_0^\omega (x,P,V) \right| = \int_{r=\min \left( 1 , \frac{d(\overline{\omega},\Omega^c)}{2} \right)}^1 \frac{1}{r^{d+1}} \int_{B_r(x) \cap \Omega} \left( \frac{d(y-x,P)}{r} \right)^2 \, d\V(y) \, dr \\
& + \int_{r=0}^{\min \left( 1 , \frac{d(\overline{\omega},\Omega^c)}{2} \right)} \frac{1}{r^{d+1}} \int_{B_r(x) \cap (\Omega \setminus \overline{\omega})} \left( \frac{d(y-x,P)}{r} \right)^2 \, d\V(y) \, dr\\
& \leq \left( \frac{2}{d(\overline{\omega},\Omega^c)} \right)^{d+1} \V(\Omega) + \int_{r=d(x,\omega^c)}^{\min \left( 1 , \frac{d(\overline{\omega},\Omega^c)}{2} \right)} \frac{1}{r^{d+1}} \int_{B_r(x) \cap \Omega } \left( \frac{d(y-x,P)}{r} \right)^2 \, d\V(y) \, dr\\
 & \leq  \left( \left( \frac{2}{d(\overline{\omega},\Omega^c)} \right)^{d+1} + \left( \frac{1}{d(x,\omega^c)} \right)^{d+1} \right) \V(\Omega) < + \infty \: .
\end{align*}
Hence $E_0^\omega (x,P,V) = +\infty$ if and only if  $E_0 (x,P,V)= +\infty$, and consequently,
\[
V \left( \left\lbrace (x,P) \in \omega \times \G \: | \: E_0 (x,P,V) = +\infty \right\rbrace \right) = V \left( \left\lbrace (x,P) \in \omega \times \G \: | \: E_0^\omega (x,P,V) = +\infty \right\rbrace \right) \: .
\]
Now, thanks to Lemma~\ref{lemma_rectif_2}, we decompose $\Omega$ into $\Omega = \cup_k \omega_k$ with $\forall k$, $\omega_{k+1} \subset\subset \omega_k \subset\subset \Omega$ and $\V (\partial \omega_k)=0$. Then
\begin{align*}
V \left( \left\lbrace (x,P) \in \Omega \times \G \: | \: E_0 (x,P,V) = +\infty \right\rbrace \right) & = \lim_k V \left( \left\lbrace (x,P) \in \omega_k \times \G \: | \: E_0 (x,P,V) = +\infty \right\rbrace \right) \\
& = \lim_k V \left( \left\lbrace (x,P) \in \omega_k \times \G \: | \: E_0^{\omega_k} (x,P,V) = +\infty \right\rbrace \right)\\
& = 0 \: .
\end{align*}
\end{enumerate}
Applying the static theorem (Theorem~\ref{static_thm}) allows us to conclude the proof. 
\end{proof}

\medskip
In Theorem~\ref{dynamic_thm}, we have found conditions $(\ref{dynamic_density_condition})$ and $(\ref{dynamic_jones_condition})$ ensuring the rectifiability of the weak--$\ast$ limit $V$ of a sequence of $d$--varifolds $(V_i)_i$. Recall that the condition
\begin{equation} \label{allard_first_variation_condition}
\sup_i |\delta V_i|(\Omega) < +\infty
\end{equation}
together with the condition $(\ref{dynamic_density_condition})$ also ensure the rectifiability of the weak--$\ast$ limit $V$ of $(V_i)_i$. But, in Proposition~\ref{computation_first_variation_discrete_varifold}, we have computed the first variation of a discrete varifold (discrete varifolds are defined in Example~\ref{diffuse_discrete_varifolds}) and we have seen in Example~\ref{first_variation_explosion} that even in the case where the limit varifold $V$ is very simple (we considered a straight line), the natural approximations of $V$ by discrete varifolds $V_i$ generally do not satisfy $(\ref{allard_first_variation_condition})$ even though $|\delta V|(\Omega) = 0$.

\noindent We now check that the condition $(\ref{dynamic_jones_condition})$ in Theorem~\ref{dynamic_thm} is better adapted to general sequences of varifolds than the control of the first variation $(\ref{allard_first_variation_condition})$. Indeed, in the next Proposition, we prove that given a $d$--varifold $V$ with some regularity property, and given any sequence of $d$--varifolds $V_i \xrightharpoonup[i \to \infty]{\ast} V$, there exists a subsequence of $(V_i)_i$ satisfying a local version of condition $(\ref{dynamic_jones_condition})$ in Theorem~\ref{dynamic_thm}.

\begin{prop} \label{local_jones_uniform_condition}
Let $\Omega \subset \R^n$ be an open set and let $V$ be a $d$--varifold in $\Omega$ such that
\[
\int_{\Omega \times \G} E_0 (x,P,V) \, dV(x,P) < + \infty \: .
\]
Let $(V_i)_i$ be a sequence of $d$--varifolds weakly--$\ast$ converging to $V$ with $\sup_i \|V_i\|(\Omega) < +\infty$. Then, given $\alpha_i \downarrow 0$, for every $\omega \subset\subset \Omega$ such that $\V (\partial \omega) = 0$, there exists a subsequence $(W_i)_i = (V_{\phi(i)})_i$ such that
\begin{equation} \label{local_jones_condition}
\sup_i \int_{\omega \times \G} E_{\alpha_i}^\omega (x,P,W_i) \, dW_i (x,P) < + \infty \: .
\end{equation}
\end{prop}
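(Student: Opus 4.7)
The plan is to apply Proposition~\ref{convergence_integrated_energies} directly to the given sequence $(\alpha_i)_i$ and to exploit the assumed finiteness of $\int_{\Omega \times \G} E_0(x,P,V)\,dV(x,P)$.

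\textbf{Step 1.} Since $\omega \subset\subset \Omega$ satisfies $\V(\partial\omega)=0$, since $V_i \xrightharpoonup{*} V$, and since $\sup_i \|V_i\|(\Omega) < +\infty$, Proposition~\ref{convergence_integrated_energies} applied to the sequence $(\alpha_i)_i$ yields an extraction $\phi$ such that
\[
\int_{\omega \times \G} E_{\alpha_i}^\omega(x,P,V_{\phi(i)})\,dV_{\phi(i)}(x,P) \xrightarrow[i\to\infty]{} \int_{\omega \times \G} E_0^\omega(x,P,V)\,dV(x,P).
\]

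\textbf{Step 2.} By the very definition of $E_0^\omega$ and $E_0$, one has the pointwise bound $E_0^\omega(x,P,V) \leq E_0(x,P,V)$ for every $(x,P) \in \omega \times \G$ (the integrand is the same, but $E_0^\omega$ integrates over $B_r(x)\cap\omega \subset B_r(x)\cap\Omega$ and only up to $\min(1,d(\overline\omega,\Omega^c)/2) \leq 1$). Consequently
\[
\int_{\omega \times \G} E_0^\omega(x,P,V)\,dV(x,P) \leq \int_{\Omega \times \G} E_0(x,P,V)\,dV(x,P) < + \infty
\]
by assumption.

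\textbf{Step 3.} Set $W_i := V_{\phi(i)}$. Since the sequence $\int_{\omega \times \G} E_{\alpha_i}^\omega(x,P,W_i)\,dW_i(x,P)$ converges (Step 1) to a finite limit (Step 2), it is in particular bounded, which is exactly~(\ref{local_jones_condition}).

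No genuine obstacle appears: the statement is essentially a corollary of Proposition~\ref{convergence_integrated_energies} combined with the pointwise inequality $E_0^\omega \leq E_0$. The work has already been done in establishing the uniform convergence of $E_{\alpha_i}^\omega(\cdot,\cdot,V_i)$ to $E_{\alpha_i}^\omega(\cdot,\cdot,V)$ over $\overline\omega\times\G$ (Proposition~\ref{uniform_CV}) and in promoting this to convergence of the integrated energies.
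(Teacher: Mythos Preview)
Your proof is correct and follows exactly the approach of the paper, which simply states that the result is a direct consequence of Proposition~\ref{convergence_integrated_energies}. You have spelled out the two ingredients that the paper leaves implicit: the pointwise inequality $E_0^\omega \leq E_0$ (already noted in the Remark following the definition of $E_\alpha^\omega$) and the fact that a sequence converging to a finite limit is bounded.
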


\begin{proof}
It is a direct consequence of Proposition~\ref{convergence_integrated_energies}.
\end{proof}



The condition $(\ref{local_jones_condition})$ is expressed in terms of the local version $E_\alpha^\omega$ of $E_\alpha$. In the case where the varifolds are contained in the same compact set, then global condition $(\ref{dynamic_jones_condition})$ of Theorem~\ref{dynamic_thm} is satisfied by some subsequence.

\begin{prop}  \label{global_jones_uniform_condition}
Let $\alpha_i \downarrow 0$. Let $V$ be a rectifiable $d$--varifold in $\R^n$ with compact support and such that
\[
\int_{\omega \times \G} E_0 (x,P,V) \, dV(x,P) < + \infty \: .
\]
Assume moreover that there exists some sequence of $d$--varifolds $(V_i)_i$ weakly--$\ast$ converging to $V$ with $\sup_i \|V_i\|(\R^n) < +\infty$. Then for any $\omega \subset\subset \R^n$ such that $\supp \V + B_1(0) \subset \omega$ and for all $i$, $\supp \|V_i\| + B_1(0) \subset \omega$, there exists a subsequence $(V_{\phi(i)})_i$ such that
\[
\sup_i \int_{\omega \times \G} E_{\alpha_i} (x,P,V_{\phi(i)}) \, dV_{\phi(i)} (x,P) < + \infty \: .
\]
\end{prop}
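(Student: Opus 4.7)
The plan is to reduce Proposition \ref{global_jones_uniform_condition} to its local counterpart, Proposition \ref{local_jones_uniform_condition}, by exploiting the assumption $\supp \V + B_1(0) \subset \omega$ and $\supp \|V_i\| + B_1(0) \subset \omega$ to identify $E_\alpha^\omega$ with $E_\alpha$. Concretely, I would first check that the hypotheses of Proposition \ref{local_jones_uniform_condition} are met with $\Omega = \R^n$: since $\supp \V \subset \omega$, the measure $V$ is concentrated on $\omega \times \G$, so the given bound $\int_{\omega\times\G} E_0(x,P,V)\,dV(x,P) < +\infty$ is the same as $\int_{\R^n\times\G} E_0(x,P,V)\,dV(x,P) < +\infty$; and since $\partial\omega$ is disjoint from $\supp\V$, one has $\V(\partial\omega)=0$.

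Applying Proposition \ref{local_jones_uniform_condition} then produces a subsequence $W_i = V_{\phi(i)}$ satisfying
\[
\sup_i \int_{\omega \times \G} E_{\alpha_i}^\omega(x,P,W_i)\, dW_i(x,P) < +\infty.
\]
The remaining point is to upgrade $E_{\alpha_i}^\omega$ to $E_{\alpha_i}$. With $\Omega = \R^n$, $d(\overline{\omega},\Omega^c) = +\infty$, so $\min(1, d(\overline{\omega},\Omega^c)/2) = 1$ and the $r$-range in the definition of $E_{\alpha_i}^\omega$ coincides with that of $E_{\alpha_i}$. Moreover, because $\supp \|W_i\| \subset \omega$, for every $x \in \R^n$ and $r>0$ one has
\[
\int_{B_r(x)} \left(\frac{d(y-x,P)}{r}\right)^2 d\|W_i\|(y) = \int_{B_r(x) \cap \omega} \left(\frac{d(y-x,P)}{r}\right)^2 d\|W_i\|(y),
\]
so $E_{\alpha_i}(x,P,W_i) = E_{\alpha_i}^\omega(x,P,W_i)$ pointwise. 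Integrating over $\omega \times \G$ against $W_i$ immediately transfers the uniform bound from the localized energy to the global one.

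The step that requires a little care is the verification that the restriction-to-$\omega$ in the definition of $E_\alpha^\omega$ really is harmless in this setting; this is where the buffer $+B_1(0)$ in the hypotheses $\supp \V + B_1(0) \subset \omega$ and $\supp\|V_i\| + B_1(0) \subset \omega$ is used, guaranteeing that for every $x \in \supp\|W_i\|$ and every $r \leq 1$ the whole ball $B_r(x)$ lies inside $\omega$ (and in particular no mass is truncated). Beyond this bookkeeping, the argument is a direct application of Proposition \ref{local_jones_uniform_condition}; there is no new analytical obstacle, only the translation from the local to the global formulation.
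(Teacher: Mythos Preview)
Your proposal is correct and follows essentially the same route as the paper: both arguments reduce to Proposition~\ref{convergence_integrated_energies} (you invoke it through Proposition~\ref{local_jones_uniform_condition}, which is itself stated as a direct consequence of Proposition~\ref{convergence_integrated_energies}) and then observe that the support hypotheses force $E_{\alpha}^\omega = E_{\alpha}$. If anything, you are slightly more explicit than the paper, which only writes out the identity $E_\alpha^\omega(x,P,V)=E_\alpha(x,P,V)$ for the limit varifold $V$, whereas the conclusion actually requires it for the approximating varifolds $V_{\phi(i)}$; you correctly use $\supp\|V_i\|\subset\omega$ for that.
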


\begin{proof}
It is again a direct consequence of Proposition~\ref{convergence_integrated_energies} (since $\overline{\omega}$ is compact and $\V (\partial \omega) = 0$) combined with the fact that $\supp \V + B_1(0) \subset \omega$ implies 
\[
E_\alpha^\omega (x,P,V) = \int_{r=\alpha}^{\min \left( 1,\frac{ d(\overline{\omega},(\R^n)^c)  }{2}\right)} \frac{1}{r^d} \int_{B_r (x) \cap \omega} \left( \frac{d (y-x,P)}{r} \right)^2 \, d \V \, \frac{dr}{r} = E_\alpha (x,P,V) \: .
\]
\end{proof}

Given $V_i \xrightharpoonup[i \to +\infty]{\: \ast \:} V$ and $\alpha_i \downarrow 0$, the previous propositions~$\ref{local_jones_uniform_condition}$ and $\ref{global_jones_uniform_condition}$ give a subsequence $(V_{\phi(i)})_i$ satisfying $(\ref{dynamic_jones_condition})$
\[
\sup_i \int E_{\alpha_i} (x,P,V_{\phi(i)}) \, d V_{\phi(i)}(x,P) < +\infty \:
\]
In the following proposition, we focus on sequences of discrete varifolds defined in Example~\ref{diffuse_discrete_varifolds}. Under some uniform regularity assumption on $V$, we give a sequence $(V_i)_i$ of discrete varifolds such that
\[
V_i \xrightharpoonup[i \to +\infty]{\: \ast \:} V \: ,
\]
and a condition linking the scale parameter $\alpha_i$ and the size $\delta_i$ of the mesh associated to the discrete varifold $V_i$, ensuring that $(\ref{dynamic_jones_condition})$ holds for $V_i$ and not for a subsequence.

\begin{theo}
Let $V = v(M,\theta)$ be a rectifiable $d$--varifold in $\R^n$ with finite mass $\V(\Omega)<+\infty$ and compact support. Let $\delta_i \downarrow 0$ be a sequence of infinitesimals and $(\cK_i)_i$ a sequence of meshes satisfying
\[
\sup_{K \in \cK_i} \diam (K) \leq \delta_i \xrightarrow[i \to + \infty]{} 0 \: .
\]
Assume that there exists $0<\beta< 1$ and $C>0$ such that for $\V$--almost every $x$, $y \in \Omega$,
\[
\| T_x M - T_y M \| \leq C | x - y |^\beta \: .
\]
Define the sequence of discrete varifolds:
\[
V_i = \sum_{K \in \cK_i} \frac{m_K^i}{|K|} \cL^n \otimes \delta_{P_K^i} \text{ with } m_K^i = \V(K) \text{ and } P_K^i \in \argmin_{P \in \G} \int_{K \times \G} \left\| P - S \right\| \, dV(x,S) \: .
\]
Then,
\begin{enumerate}[$(i)$]
\item $V_i \xrightharpoonup[i \to + \infty]{ \: \ast \: } V$,
\item For any sequence of infinitesimals $\alpha_i \downarrow 0$ and such that for all $i$,
\begin{equation}
\frac{\delta_i^\beta}{\alpha_i^{d+3}} \xrightarrow[i \to +\infty]{} 0 \: , \label{quantitative_condition_on_scale}
\end{equation}
we have,
\[
\int_{\R^n \times \G} E_0 (x,P,V) \, dV(x,P) = \lim_{i \to +\infty} \int_{\R^n \times \G} E_{\alpha_i} (x,P,V_i) \, dV_i(x,P) < + \infty \: .
\]
\end{enumerate}
\end{theo}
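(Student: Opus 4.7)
Throughout, the proof rests on the following consequence of the Hölder hypothesis and the minimality of $P_K^i$: testing the minimizing integral $\int_{M \cap K} \|P - T_x M\|\, d\V(x)$ against $T_{y_0} M$ for some $y_0 \in M \cap K$ gives $\int_{M \cap K} \|P_K^i - T_x M\|\, d\V(x) \leq C \delta_i^\beta \V(K)$, and one application of the triangle inequality yields
\[
\|P_K^i - T_y M\| \leq 2 C \delta_i^\beta \quad \text{for $\V$--a.e.\ } y \in M \cap K.
\]
For (i), I would expand $\int \varphi\, dV_i - \int \varphi\, dV$ as a sum over cells. On each cell $K$ of positive mass, the measures $\V_{|K}$ and $\|V_i\|_{|K} = \frac{\V(K)}{|K|}\cL^n_{|K}$ share the same total mass $\V(K)$ and both sit in a set of diameter $\delta_i$, so their Wasserstein--$1$ distance is at most $\delta_i \V(K)$; this plus the bound above on $P_K^i$ and uniform continuity of $\varphi$ on its support forces $\int \varphi\, dV_i \to \int \varphi\, dV$.

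For (ii), I first check that $I := \int E_0(x, T_x M, V)\, dV < +\infty$: the Hölder assumption lets one parametrize $M$ locally as a $\xC^{1,\beta}$--graph over $T_x M$, hence $d(y - x, T_x M) \leq C |y-x|^{1+\beta}$ for $y \in M$ near $x$, which combined with the finiteness of $\theta^{*d}(\V, x) = \theta(x)$ $\V$--a.e.\ makes $E_0(x, T_x M, V)$ integrable against $V$. To prove $I_i := \int E_{\alpha_i}(x, P, V_i)\, dV_i \to I$, I choose representatives $\hat x_K \in M \cap K$ whenever $\V(K)>0$ and split
$|I_i - I| \leq \mathcal T_1 + \mathcal T_2 + \mathcal T_3 + \mathcal T_4$,
where $\mathcal T_1$ replaces the outer variable $x$ by $\hat x_K$ inside each cell (Lipschitz continuity of $x \mapsto E_{\alpha_i}(x,P,V)$), $\mathcal T_2$ swaps $V_i$ for $V$ inside the energy at the fixed pair $(\hat x_K, P_K^i)$, $\mathcal T_3$ swaps the plane from $P_K^i$ to $T_{\hat x_K} M$, and $\mathcal T_4$ reinstates $\alpha_i \to 0$. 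Using $|d(z, P)^2 - d(z, Q)^2| \leq 2|z|^2 \|P - Q\|$ together with the Hölder bound on $P_K^i$, term $\mathcal T_3$ is $\leq C \delta_i^\beta |\log \alpha_i| \V(\Omega)$; via the $\xC^{1,\beta}$--graph estimate, $\mathcal T_4 = O(\alpha_i^{2\beta} \V(\Omega))$.

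The critical term is $\mathcal T_2$, which is the quantitative version of Proposition~\ref{alpha_i_choice} applied to our specific sequence; I would show $\eta_i^\omega \leq C \delta_i$, whence $\mathcal T_2 \leq C \delta_i / \alpha_i^{d+3} \V(\Omega)$. This rests on two ingredients: on a cell $K$ entirely inside $B_r(x)$, the zero-mean property $m_K^i = \V(K)$ with $\diam K \leq \delta_i$ gives $|\int_K \phi\, d(\|V_i\|-\V)| \leq L \delta_i \V(K)$ for every $L$--Lipschitz test $\phi$; for cells straddling $\partial B_r(x)$ the $r$--integration smooths the boundary defect via the Fubini estimate
\[
\int_0^R \V\bigl(\{y : ||y-x| - r| \leq \delta_i\}\bigr)\, dr \leq 2 \delta_i \V(\Omega).
\]
Finally, the scale condition $\delta_i^\beta / \alpha_i^{d+3} \to 0$ with $\delta_i \to 0$ and $0 < \beta < 1$ ensures $\delta_i / \alpha_i^{d+3} = \delta_i^{1-\beta} \cdot (\delta_i^\beta / \alpha_i^{d+3}) \to 0$ (killing $\mathcal T_1$ and $\mathcal T_2$), $\delta_i^\beta |\log \alpha_i| = \alpha_i^{d+3}|\log \alpha_i| \cdot (\delta_i^\beta / \alpha_i^{d+3}) \to 0$ (killing $\mathcal T_3$), and $\alpha_i^{2\beta} \to 0$ (killing $\mathcal T_4$), so summation gives $I_i \to I$. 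The main obstacle is producing the sharp $\eta_i^\omega = O(\delta_i)$ bound, for which the Fubini-in-$r$ smoothing is essential since a pointwise boundary count would diverge when $n > d+1$.
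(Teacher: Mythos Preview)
Your opening pointwise bound $\|P_K^i - T_yM\| \leq 2C\delta_i^\beta$ and your treatment of part $(i)$ are correct and essentially parallel the paper's Steps~1--3; if anything, noting that minimality plus one triangle inequality already yields a \emph{pointwise} (not merely averaged) bound is slicker than the paper's intermediate averaging construction in Step~2.

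The real trouble is in part $(ii)$. Your argument for $I<+\infty$ and for the rate on $\mathcal T_4$ both hinge on the claim that the H\"older condition on the tangent map forces $M$ to be locally a $\xC^{1,\beta}$ graph over $T_xM$, whence $d(y-x,T_xM)\leq C|y-x|^{1+\beta}$. This implication is false for merely rectifiable $M$: take $M$ to be the union of two parallel $d$--planes at distance $\epsilon$ (with multiplicity $1$). The tangent plane map is constant, hence trivially H\"older with any exponent, yet for $x$ on one plane and $y$ the nearest point on the other, $d(y-x,T_xM)=\epsilon=|y-x|$, so the graph estimate would force $\epsilon\leq C\epsilon^{1+\beta}$, impossible for small $\epsilon$. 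Thus neither the finiteness of $I$ nor the $O(\alpha_i^{2\beta})$ bound for $\mathcal T_4$ is justified. (Your $\mathcal T_3\leq C\delta_i^\beta|\log\alpha_i|\,\V(\Omega)$ also silently uses an upper Ahlfors bound $\V(B_r(x))\leq Cr^d$, which is not assumed; the cruder $\delta_i^\beta/\alpha_i^{d+1}$ still goes to zero, but this is worth flagging.)

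The paper avoids the whole issue by a different decomposition: rather than passing through the limit $I$, it proves directly
\[
\left|\int E_{\alpha_i}(x,P,V_i)\,dV_i-\int E_{\alpha_i}(x,P,V)\,dV\right|\leq \frac{\V(\R^n)^2}{\alpha_i^{d+3}}\bigl(8\delta_i+2C\delta_i^\beta\bigr),
\]
splitting into two pieces (swap the \emph{outer} integration measure $V_i\leftrightarrow V$; swap the \emph{inner} energy $E_{\alpha_i}(\cdot,\cdot,V_i)\leftrightarrow E_{\alpha_i}(\cdot,\cdot,V)$) and bounding each by the same cell-by-cell plus Fubini-in-$r$ smoothing you describe for $\mathcal T_2$. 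Then one bare application of monotone convergence gives $\int E_{\alpha_i}(x,P,V)\,dV\to\int E_0(x,P,V)\,dV$, with no rate whatsoever. So no analogue of your $\mathcal T_4$ ever appears, and no geometric estimate on $M$ beyond the tangential H\"older bound is needed. (Incidentally, the paper's own proof establishes only the equality; the $<+\infty$ in the statement is not argued there either.)
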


\begin{remk}
We insist on the fact that the condition on the scale parameters $\alpha_i$ and the size of the mesh $\delta_i$ is not dependent on $V_i$ but only on the regularity of $V$ i.e. on $\beta$ (and on the dimension $d$). 
\end{remk}

\begin{proof}
\begin{enumerate}[$-$]
\item Step $1$: Let $\phi \in \xLip(\R^n \times \G)$ with Lipschitz constant $\xlip (\phi)$, then
\[
\left| \left\langle V_i , \phi \right\rangle - \left\langle V , \phi \right\rangle \right| \leq \delta_i \xlip (\phi) \V(\R^n) + \xlip(\phi) \, \int_{\R^n \times \G} \left\| P^i (y) - T \right\| \, dV(y,T) \: ,
\]
where $P^i : \R^n \rightarrow \G$ is cell-wise constant, and $P^i = P^i_K$ in $K$.

\noindent Indeed,
\begin{align}
& \left| \left\langle V_i , \phi \right\rangle  - \left\langle V , \phi \right\rangle \right|  = \left| \int_{\R^n \times \G} \phi (x,S) \, dV_i(x,S) - \int_{\R^n \times \G} \phi(y,T) \, dV(y,T) \right| \nonumber \\
& = \left| \sum_{K \in \cK_i} \int_K \phi(x,P_K^i) \frac{\V(K)}{|K|} \, d \cL^n (x) - \sum_{K \in \cK_i} \int_{K \times \G} \phi(y,T) \, dV(y,T) \right| \nonumber \\
& = \left| \sum_{K \in \cK_i} \int_{x \in K} \int_{K \times \G} \phi(x,P_K^i) \, dV(y,T) \, \frac{d \cL^n(x)}{|K|}  - \sum_{K \in \cK_i} \int_{x \in K} \int_{K \times \G} \phi(y,T) \, dV(y,T) \, \frac{d \cL^n(x)}{|K|} \right| \nonumber  \\
& \leq \sum_{K \in \cK_i} \int_{x \in K} \int_{(y,T) \in K \times \G} \underbrace{\left| \phi(x,P_K^i) - \phi(y,T) \right| }_{\leq \xlip(\phi) \left( |x-y| + \left\| P_K^i - T \right\| \right) } \, dV(y,T) \, \frac{d \cL^n(x)}{|K|}  \label{discrete_varifolds_lipschitz_case} \\
& \leq \delta_i \xlip(\phi) \V(\R^n) + \xlip(\phi) \, \int_{\R^n \times \G} \left\| P^i (y) - T \right\| \, dV(y,T) \: . \label{quantitative_lipschitz_control}
\end{align}
We now study the convergence of the term $\displaystyle \int_{\R^n \times \G} \left\| P^i (y) - T \right\| \, dV(y,T)$.

\item Step $2$:
\begin{equation}
\int_{\R^n \times \G} \left\| P^i (y) - T \right\| \, dV(y,T) \leq 2 C \delta_i^\beta \V (\R^n) \xrightarrow[i \to +\infty]{} 0 \: . \label{discrete_varifold_convergence_tangent_plane}
\end{equation}

\noindent First define, for all $i$, $A^i : \R^n \rightarrow \cM_n(\R)$ cell-wise constant: 
\[
A^i = A^i_K = \frac{1}{\V(K)} \int_K T_u M \, d\V(u) \text{ constant in the cell } K \in \cK_i \, .
\]
Then,
\begin{align*}
\int_{\R^n \times \G} \left\| A^i (y) - T \right\| \, d V(y,T) & = \sum_{K \in \cK_i} \int_K \left\|  \frac{1}{\V(K)} \int_K T_u M \, d\V(u) - T_y M  \right\| \, d \V(y) \\
& \leq \sum_{K \in \cK_i} \int_K   \frac{1}{\V(K)} \int_K \underbrace{ \left\| T_u M  - T_y M  \right\| }_{\leq C |u-y|^\beta \leq C \delta_i^\beta} \, d\V(u) \, d \V(y) \\
& \leq C \delta_i^\beta \V (\R^n) \: .
\end{align*}
Consequently, $\displaystyle \int_K  \left\| A^i (y) - T_y M \right\| \, d \V(y) = \epsilon_K^i$ with $\displaystyle \sum_{K \in \cK_i} \epsilon_K^i < C \delta_i^\beta \V (\R^n)$. In particular, for all $K \in \cK_i$, there exists $y_K \in K$ such that
\[
\left\| A^i (y_K) - T_{y_K} M \right\| \leq \frac{\epsilon_K^i}{\V(K)} \: .
\]
Define $T^i : \R^n \rightarrow \G$, constant in each cell, by $T^i (y) = T^i_K = T_{y_K} M$ for $K \in \cK_i$ and $y \in K$, and then,
\begin{align*}
\int_{\R^n \times \G} \left\| T^i (y) - T \right\| \, dV(y,T) & = \sum_{K \in \cK_i} \int_K  \left\| T_{y_K} M  - T_y M \right\| \, d \V(y) \\
& \leq \sum_{K \in \cK_i} \int_K  \| T_{y_K} M  - \underbrace{A^i(y)}_{= A^i(y_K)}\| \, d \V(y) + \int_{\R^n \times \G} \left\| A^i (y) - T \right\| \, dV(y,T) \\
& \leq \sum_{K \in \cK_i} \int_K \frac{\epsilon_K^i}{\V(K)} d \V(y) + C \delta_i^\beta \V (\R^n) \\
& \leq 2 C \delta_i^\beta \V (\R^n) \: .
\end{align*}
Now remind that for all $K \in \cK_i$, $\displaystyle P_K^i \in \argmin_{P \in \G} \int_{K \times \G} \left\| P - T \right\| \, dV(y,T)$ so that,
\begin{align*}
\int_{\R^n \times \G} \left\| P^i (y) - T \right\| \, dV(y,T) & = \sum_{K \in \cK_i} \int_{K \times \G}  \left\| P^i_K  - T \right\| \, dV(y,T) \\
& \leq \sum_{K \in \cK_i} \int_{K \times \G}  \left\| T^i_K  - T \right\| \, dV(y,T) \\
& \leq 2 C \delta_i^\beta \V (\R^n) 
\end{align*}

\item Step $3$: $V_i \xrightarrow[i \to + \infty]{\: \ast \:} V $.

\noindent Thanks to Steps $1$ and $2$, we have proved that for any $\phi \in \xLip (\Omega \times \G)$,
\begin{equation} \label{convergence_diffuse_discrete_varifold_lipschitz_case}
\left\langle V_i , \phi \right\rangle \xrightarrow[i \to +\infty]{} \left\langle V , \phi \right\rangle \: ,
\end{equation}
it remains to check the case $\phi \in \xC_c^0(\R^n \times \G)$. Let $\phi \in \xC_c^0 (\R^n \times \G)$ and $\epsilon > 0$. We can extend $\phi$ into $\overline{\phi} \in \xC_c^0 (\R^n \times \cM_n(\R))$ by Tietze-Urysohn theorem since $\G$ is closed. Then, by density of $\xLip (\R^n \times \cM_n(\R))$ in $\xC_c^0 (\R^n \times \cM_n(\R))$ with respect to the uniform topology, there exists $\overline{\psi} \in \xLip (\R^n \times \cM_n(\R))$ such that $\left\| \overline{\phi} - \overline{\psi} \right\|_{\infty} < \epsilon$. Let now $\psi \in \xLip (\R^n \times \G)$ be the restriction of $\overline{\psi}$ to $\R^n \times \G$, then,
\begin{align*}
\left| \left\langle V, \phi \right\rangle - \left\langle V_i , \phi \right\rangle \right| & \leq \left| \left\langle V, \phi \right\rangle - \left\langle V , \psi \right\rangle \right| + \left| \left\langle V, \psi \right\rangle - \left\langle V_i , \psi \right\rangle \right| + \left| \left\langle V_i , \psi \right\rangle - \left\langle V_i , \phi \right\rangle \right|\\
& \leq \V (\R^n) \| \phi - \psi \|_{\infty} +  \left| \left\langle V, \psi \right\rangle - \left\langle V_i , \psi \right\rangle \right| + \| V_i \| ( \R^n) \| \phi - \psi \|_{\infty} \: .
\end{align*}
As $\| V_i \|(\R^n) = \V(\R^n)$ for all $i$ by definition of $V_i$ and $\left| \left\langle V, \psi \right\rangle - \left\langle V_i , \psi \right\rangle \right| \xrightarrow[i \to + \infty]{} 0$ by $(\ref{convergence_diffuse_discrete_varifold_lipschitz_case})$, there exists $i$ large enough such that
\[
\left| \left\langle V, \phi \right\rangle - \left\langle V_i , \phi \right\rangle \right| \leq \left( 2 \V(\R^n) + 1 \right) \, \epsilon \: ,
\]
which concludes the proof of the weak--$\ast$ convergence.

\noindent We now estimate,
\begin{align}
\left| \int_{\R^n \times \G} \right. & \left. E_\alpha (x,P,V) \, d V(x,P) - \int_{\R^n \times \G} E_\alpha (x,P,V_i) \, d V_i(x,P) \right| \\
 & \leq \left| \int_{\R^n \times \G} E_\alpha (x,P,V) \, d V(x,P) - \int_{\R^n \times \G} E_\alpha (x,P,V) \, d V_i (x,P) \right| \label{energy_difference_1} \\
 & + \left| \int_{\R^n \times \G} E_\alpha (x,P,V) \, d V_i (x,P) - \int_{\R^n \times \G} E_\alpha (x,P,V_i) \, d V_i(x,P) \right| \label{energy_difference_2}
\end{align}

\item Step $4$: We begin with $(\ref{energy_difference_1})$ and we prove that
\begin{equation} \label{energy_difference_4}
\left| \int_{\R^n \times \G} E_\alpha (x,P,V) \, d V_i (x,P) - \int_{\R^n \times \G} E_\alpha (x,P,V) \, d V(x,P) \right| \leq \frac{1}{\alpha^{d+3}}  \V(\R^n)^2 \left[ 4 \delta_i + 2C \delta_i^\beta \right] \: .
\end{equation}
\begin{align}
 \left| \int_{\R^n \times \G} \right. & \left. E_\alpha (x,P,V) \, d V_i (x,P) - \int_{\R^n \times \G} E_\alpha (x,P,V) \, d V(x,P) \right| \nonumber \\
& = \left| \int_{\R^n \times \G} \int_{r = \alpha}^1 \frac{1}{r^{d+1}} \int_{y \in B_r(x)} \left( \frac{d(y-x,P)}{r} \right)^2 \, d \V(y) \, dr \, dV_i (x,P) \right. \nonumber  \\
& \left. - \int_{\R^n \times \G} \int_{r = \alpha}^1 \frac{1}{r^{d+1}} \int_{y \in B_r(x)} \left( \frac{d(y-x,P)}{r} \right)^2 \, d \V(y) \, dr \, dV (x,P) \right| \nonumber  \\
& \leq \int_{r = \alpha}^1 \frac{1}{r^{d+3}} \int_{y \in \R^n} \left| \int_{\R^n \times \G} \one_{ \{ |y-x|<r \} } (x) \left( d(y-x,P) \right)^2  \, dV_i (x,P) \right. \nonumber  \\
& \left. - \int_{\R^n \times \G} \one_{ \{ |y-x|<r \} } (x) \left( d(y-x,P) \right)^2  \, dV (x,P) \right| \, d\V(y) \, dr \label{energy_difference_3}
\end{align}
And as in Step $1$ in $(\ref{discrete_varifolds_lipschitz_case})$, for fixed $y$ and $\alpha<r<1$, we have by definition of $V_i$:
\begin{align}
& \left| \int_{\R^n \times \G} \one_{ \{ |y-x|<r \} } (x) \left( d(y-x,P) \right)^2  \, dV_i (x,P) - \int_{\R^n \times \G} \one_{ \{ |y-x^\prime |<r \} } (x^\prime) \left( d(y-x^\prime,P^\prime) \right)^2  \, dV (x^\prime,P^\prime) \right| \nonumber \\
& \leq \sum_{K \in \cK_i} \int_{x \in K} \int_{ K \times \G} \left| \one_{ B_r(y) } (x) \left( d(y-x,P_K^i) \right)^2  - \one_{ B_r(y) } (x^\prime) \left( d(y-x^\prime,P^\prime) \right)^2 \right|  \, dV(x^\prime,P^\prime) \, \frac{d \cL^n(x)}{|K|} \label{discrete_varifolds_lipschitz_case_2}
\end{align}
And in $(\ref{discrete_varifolds_lipschitz_case_2})$, either $x,x^\prime \in B_r(y)$ and in this case
\begin{align*}
\left| \one_{ B_r(y) } (x) \left( d(y-x,P_K^i) \right)^2  - \one_{ B_r(y) } (x^\prime) \left( d(y-x^\prime,P^\prime) \right)^2 \right| & \leq 2 r \, \left|  d(y-x,P_K^i) - d(y-x^\prime,P^\prime) \right| \\
& \leq 2r \, \left( |x-x^\prime | + |y - x^\prime| \| P_K^i - P^\prime \| \right) \\
& \leq 2\, \left( |x-x^\prime | + \| P_K^i - P^\prime \| \right) \: ,
\end{align*}
either $\left\lbrace \begin{array}{l}
x \in B_r(y) \text{ and } x^\prime \notin B_r(y) \text{ or,}\\
x^\prime \in B_r(y) \text{ and } x \notin B_r(y),
\end{array} \right.$ and in this case
\[
\left| \one_{ B_r(y) } (x) \left( d(y-x,P_K^i) \right)^2  - \one_{ B_r(y) } (x^\prime) \left( d(y-x^\prime,P^\prime) \right)^2 \right| \leq r^2 \leq 1 \: .
\]
Notice that, as $| x - x^\prime | \leq \delta_i$ this second case can only happen for $x, \, x^\prime \in B_{r + \delta_i}(y) \setminus B_{r-\delta_i}(y)$. Consequently,
\begin{align*}
& \left| \int_{\R^n \times \G} \one_{ \{ |y-x|<r \} } (x) \left( d(y-x,P) \right)^2  \, dV_i (x,P) - \int_{\R^n \times \G} \one_{ \{ |y-x^\prime |<r \} } (x^\prime) \left( d(y-x^\prime,P^\prime) \right)^2  \, dV (x^\prime,P^\prime) \right| \nonumber \\
& \leq \sum_{K \in \cK_i} \int_{x \in K} \int_{ K \times \G}  2\, \left( |x-x^\prime | + \| P_K^i - P^\prime \| \right) \, dV(x^\prime,P^\prime) \, \frac{d \cL^n(x)}{|K|} \\
& + \sum_{K \in \cK_i} r^2 \V \left( K \cap B_{r + \delta_i}(y) \setminus B_{r-\delta_i}(y) \right)  \underbrace{ \frac{\left|   K \cap B_{r + \delta_i}(y) \setminus B_{r-\delta_i}(y) \right|}{|K|} }_{\leq 1} \\
& \leq 2\delta_i \, \V(\R^n) + \int_{\R^n \times \G} \| P^i (x^\prime) - P^\prime \| \, dV(x^\prime,P^\prime) + \V \left(  B_{r + \delta_i}(y) \setminus B_{r-\delta_i}(y) \right) \\
& \leq 2\left( \delta_i +C \delta_i^\beta \right) \V(\R^n) + \V \left(  B_{r + \delta_i}(y) \setminus B_{r-\delta_i}(y) \right) \text{ thanks to } (\ref{discrete_varifold_convergence_tangent_plane}) \text{ in Step } 2 \: .
\end{align*}
Notice that
\begin{align*}
\int_{r=0}^1 \V \left(  B_{r + \delta_i}(y) \setminus B_{r-\delta_i}(y) \right) \, dr&  = \int_{r=0}^1 \V \left(  B_{r + \delta_i}(y) \right) \, dr - \int_{r=\delta_i}^1 \V \left(   B_{r-\delta_i}(y) \right) \, dr \\
& =  \int_{r=\delta_i}^{1 + \delta_i} \V \left(  B_r(y) \right) \, dr -  \int_{r=0}^{1-\delta_i} \V \left(  B_r(y) \right) \, dr \\
& \leq \int_{r = 1-\delta_i}^{1+\delta_i} \V(B_r(y)) \, dr \leq 2 \delta_i \V(\R^n) \: .
\end{align*}
Eventually, by $(\ref{energy_difference_3})$,
\begin{align*}
\left| \int_{\R^n \times \G} \right. & \left. E_\alpha (x,P,V) \, d V_i (x,P) - \int_{\R^n \times \G} E_\alpha (x,P,V) \, d V(x,P) \right| \\
& \leq \frac{1}{\alpha^{d+3}} \int_{r=0}^1 \int_{\R^n} 2\left( \delta_i +C \delta_i^\beta \right) \V(\R^n) + \V \left(  B_{r + \delta_i}(y) \setminus B_{r-\delta_i}(y) \right) \, d\V(y) \, dr \\
& \leq \frac{1}{\alpha^{d+3}}  \left[ 2\left( \delta_i +C \delta_i^\beta \right) \V(\R^n)^2 + \int_{\R^n} \int_{r=0}^1 \V \left(  B_{r + \delta_i}(y) \setminus B_{r-\delta_i}(y) \right) \, dr  \, d\V(y) \right] \\
& \leq \frac{1}{\alpha^{d+3}}  \V(\R^n)^2 \left[ 4 \delta_i + 2C \delta_i^\beta \right] \: .
\end{align*}

\item Step $5$: It remains to estimate $(\ref{energy_difference_2})$, we prove that 
\begin{equation} \label{energy_difference_5}
\left| \int_{\R^n \times \G} E_\alpha (x,P,V) \, d V_i (x,P) - \int_{\R^n \times \G} E_\alpha (x,P,V_i) \, d V_i(x,P) \right| \leq \: \frac{1}{\alpha^{d+3}} 4 \V(\R^n)^2 \delta_i .
\end{equation}
Indeed, exactly as previously (but fixing $x$ and integrating against $\|V_i\|$, $\V$ instead of $V_i$, $V$, so that the term depending on $P^i$ does not take part into this estimate), we have
\begin{align}
\left| E_\alpha (x,P,V_i) \right. & \left. - E_\alpha (x,P,V) \right| \nonumber \\
& \leq \frac{1}{\alpha^{d+3}} \int_{r=0}^1 \left| \int_{B_r(x)} d(y-x,P)^2 \, d \|V_i\|(y) - \int_{B_r(x)} d(y^\prime-x,P)^2 \, d \V(y^\prime) \right| \, dr \nonumber \\
& \leq  \frac{1}{\alpha^{d+3}} \int_{r=0}^1 \left(  2 \delta_i \V(\R^n) +  \V \left( B_{r + \delta_i}(y) \setminus B_{r-\delta_i}(y)\right) \right) \, dr \nonumber \\
& \leq \frac{1}{\alpha^{d+3}} \V(\R^n) 4 \delta_i \: . \label{discrete_varifold_case}
\end{align}
We conclude this step by integrating against $V_i$, reminding that $V_i (\R^n \times \G) = \|V_i\| (\R^n) = \V(\R^n)$.

\item Step $6$: By $(\ref{energy_difference_4})$ and $(\ref{energy_difference_5})$,
\begin{align}
\left| \int_{\R^n \times \G} E_{\alpha_i} (x,P,V) \, dV(x,P) - \int_{\R^n \times \G} E_{\alpha_i} (x,P,V_i) \, dV_i(x,P) \right| & \leq \frac{1}{\alpha_i^{d+3}} \V(\R^n)^2  \left( 8 \delta_i + 2C \delta_i^\beta \right) \label{energy_difference_6} \\
& \xrightarrow[i \to +\infty]{} 0 \nonumber
\end{align}
thanks to $(\ref{quantitative_condition_on_scale})$. Then, by monotone convergence and $(\ref{energy_difference_6})$,
\begin{align*}
\int_{\R^n \times \G} E_0 (x,P,V) \, dV(x,P) & = \lim_{i \to + \infty} \int_{\R^n \times \G} E_{\alpha_i} (x,P,V) \, dV(x,P)\\
& = \lim_{i \to + \infty} \int_{\R^n \times \G} E_{\alpha_i} (x,P,V_i) \, dV_i (x,P) \: .
\end{align*}

\end{enumerate}
\end{proof}

\begin{appendices}
\section{Appendix: The approximate averaged height excess energy as a tangent plane estimator}

Throughout this section, $(V_i)_i$ is a sequence of $d$--varifolds weakly--$\ast$ converging to some $d$--varifold $V$ and $(\alpha_i)_i$ is a decreasing sequence of positive numbers tending to $0$ and such that
\begin{equation} \label{alpha_choice}
\sup_{\substack{ x \in \overline{\omega} \\  P \in \G }} \left| E_{\alpha_i}^\omega (x,P,V_i) - E_{\alpha_i}^\omega (x,P,V) \right| \: .
\end{equation}
The existence of such a sequence of $(\alpha_i)_i$ is given by Proposition~\ref{alpha_i_choice} in general, and in the case of discrete varifolds associated to a varifold $V$, \eqref{alpha_choice} holds as soon as
\[
\frac{\delta_i}{\alpha_i^{d+3}} \xrightarrow[i \to +\infty]{} 0 \text{ thanks to } (\ref{discrete_varifold_case}) \: .
\]
We want to show that under this condition on the choice of $(\alpha_i)_i$, for fixed $x \in \Omega$, the minimizers of $P \mapsto E_{\alpha_i}^x (P) = E_{\alpha_i}^\omega (x,P,V_i)$ converge, up to some subsequence, to minimizers of $P \mapsto E_0^x (P) = E_0^\omega (x,P,V)$. In the proofs, we shorten $E_{\alpha_i}^x (P) = E_{\alpha_i}^\omega (x,P,V_i)$ and $E_0^x (P) = E_0^\omega (x,P,V)$. We begin with studying the pointwise approximate averaged height excess energy with respect to $P \in G$, for fixed $x \in \Omega$ and for a fixed $d$--varifold $V$.

\subsection{The pointwise approximate averaged height excess energy}

We now fix a $d$--varifold (not supposed rectifiable nor with bounded first variation) in some open set $\Omega \subset \R^n$ and we study the continuity of $E_\alpha (x,P,V)$ with respect to $P \in \G$ and then $x \in \Omega$.

\begin{prop} \label{E_i_x_continuity}
Let $0< \alpha < 1$. Let $V $ be a $d$--varifold in an open set $\Omega \subset \R^n$ such that $\V (\Omega) < +\infty$. Then, for every $P$, $Q \in \G$,
\[
\left| E_\alpha (x,P,V) - E_\alpha (x,Q,V) \right| \leq 2 \| P-Q \| \, \int_{r = \alpha}^1 \frac{1}{r^{d+1}} \V (B_r (x) ) \, dr
\]
In particular, $P \mapsto E_\alpha (x,P,V)$ is Lipschitz with constant $K_\alpha \leq \displaystyle \frac{2}{\alpha^{d+1}}  \V (\Omega )$.
If in addition $\forall \alpha < r < 1, \: \V (B_r(x)) \leq C r^d$ then $K_\alpha \leq C \V (\Omega) \ln \frac{1}{\alpha}$.
\end{prop}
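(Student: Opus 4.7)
The plan is to express the difference $E_\alpha(x,P,V) - E_\alpha(x,Q,V)$ as a single integral and then bound the integrand pointwise in $y$, exploiting the fact that the map $P \mapsto d(y-x,P)$ is Lipschitz in $P$ with constant $|y-x|$. Since $\V(\Omega) < +\infty$ and $\alpha > 0$, all integrals involved are finite and Fubini will freely apply.

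The key elementary estimate is: for any $z \in \R^n$ and $P, Q \in \G$,
\[
\bigl| d(z,P) - d(z,Q) \bigr| = \bigl| |(I-\Pi_P)z| - |(I-\Pi_Q)z| \bigr| \leq |(\Pi_Q - \Pi_P) z| \leq \|P-Q\| \, |z|,
\]
using the paper's convention $\|P-Q\| = \|\Pi_P - \Pi_Q\|$. Factoring $a^2 - b^2 = (a-b)(a+b)$ with $a = d(z,P)$, $b = d(z,Q)$, and noting that $a,b \leq |z| \leq r$ for $z = y-x$ with $y \in B_r(x)$, I obtain
\[
\bigl| d(y-x,P)^2 - d(y-x,Q)^2 \bigr| \leq 2 \|P-Q\| \, r^2 \qquad \forall y \in B_r(x).
\]

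Writing $E_\alpha(x,P,V) = \int_\alpha^1 r^{-d-3} \int_{B_r(x)\cap\Omega} d(y-x,P)^2 \, d\V(y) \, dr$, subtracting the $Q$ version and applying the pointwise bound yields
\[
\bigl| E_\alpha(x,P,V) - E_\alpha(x,Q,V) \bigr| \leq \int_\alpha^1 \frac{2\|P-Q\| \, r^2}{r^{d+3}} \, \V(B_r(x)) \, dr = 2 \|P-Q\| \int_\alpha^1 \frac{\V(B_r(x))}{r^{d+1}} \, dr,
\]
which is the announced inequality. The two Lipschitz bounds then follow by crude estimates on $\int_\alpha^1 \V(B_r(x)) r^{-d-1} \, dr$: using $\V(B_r(x)) \leq \V(\Omega)$ together with $r^{-d-1} \leq \alpha^{-d-1}$ on $[\alpha,1]$ gives $K_\alpha \leq 2 \V(\Omega)/\alpha^{d+1}$; and under the extra hypothesis $\V(B_r(x)) \leq C r^d$ the integrand reduces to $C/r$, whose integral over $[\alpha,1]$ is $C \ln(1/\alpha)$.

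There is no real obstacle here; the only thing to be careful about is the identification of $\|P-Q\|$ with the operator norm of $\Pi_P - \Pi_Q$ (consistent with the definition recalled in the remark after the rectifiable varifold definition) and the standard estimate $\bigl||u|-|v|\bigr| \leq |u-v|$ applied to $u = (I-\Pi_P)(y-x)$, $v = (I-\Pi_Q)(y-x)$, so that the Lipschitz dependence of $P \mapsto d(y-x,P)^2$ comes out with the correct constant $2\|P-Q\|\,|y-x|^2$.
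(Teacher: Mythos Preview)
Your proof is correct and follows essentially the same route as the paper: both arguments reduce to the elementary Lipschitz estimate $|d(z,P)-d(z,Q)|\leq \|P-Q\|\,|z|$ and then bound $|a^2-b^2|$ using $a,b\leq r$ for $y\in B_r(x)$. The only cosmetic difference is that the paper works with the normalized quantities $d(y-x,P)/r\leq 1$ and bounds $|(a/r)^2-(b/r)^2|\leq 2|a/r-b/r|$, while you factor $a^2-b^2$ directly and carry the $r^{-d-3}$ outside; the resulting inequality is identical.
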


\begin{proof}
Let $P$, $Q \in \G$ then,
\[
\left| E_\alpha (x,P,V) - E_\alpha (x,Q,V) \right| \leq \int_{r = \alpha}^1 \frac{1}{r^{d+1}} \int_{B_r (x)} \left| \left( \frac{d (y-x,P)}{r} \right)^2 - \left( \frac{d (y-x,Q)}{r} \right)^2 \right| \, d \V(y) \, dr \: .
\]
If $\pi_P$ (respectively $\pi_Q$) denotes the orthogonal projection onto $P$ (respectively $Q$), recall that $| d(y-x,P) - d(y-x,Q) | \leq \| P -Q \| |y-x|$. Indeed
\begin{align*}
d(y-x,P) & = \left| y-x - \pi_P (y-x) \right| \\
& \leq \left| y-x - \pi_Q (y-x) \right| + | \pi_Q (y-x) - \pi_P (y-x) | \\
& \leq d(y-x , Q) + \underbrace{\| \pi_Q - \pi_P \|_{op} }_{= \| P-Q\| \text{ by definition}} |y-x| \: .
\end{align*}
Moreover $y \in B_r(x)$ so that $\displaystyle \frac{d(y-x,P)}{r} \leq 1$ and thus
\begin{align*}
\left| \left( \frac{d (y-x,P)}{r} \right)^2 - \left( \frac{d (y-x,Q)}{r} \right)^2 \right| & \leq 2 \, \left| \frac{d (y-x,P)}{r} -  \frac{d (y-x,Q)}{r}  \right| \\
& \leq 2 \| P -Q \| \, \frac{|y-x|}{r} \leq 2 \| P -Q \| \: .
\end{align*}
Consequently,
\[
\left| E_\alpha (x,P,V) - E_\alpha (x,Q,V) \right| \leq 2 \| P-Q \| \, \int_{r = \alpha}^1 \frac{1}{r^{d+1}} \V (B_r (x) ) \, dr \: .
\]
\end{proof}

We now study the continuity of $x \mapsto E_\alpha (x,P,V)$.

\begin{prop} \label{spatial_regularity}
Let $0< \alpha < 1$. Let $V $ be a $d$--varifold in an open set $\Omega \subset \R^n$ such that $\V (\Omega) < +\infty$. Then,
\[
\sup_{P \in \G} | E_\alpha (x,P,V) - E_\alpha (z,P,V) | \xrightarrow[z \to x]{} 0 \: .
\]
\end{prop}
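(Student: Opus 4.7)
The plan is to establish uniformity in $P$ by producing, for each $x \in \Omega$, a majorant of $|E_\alpha(x,P,V) - E_\alpha(z,P,V)|$ which is independent of $P \in \G$ and tends to $0$ as $z \to x$. For fixed $P$ and $r \in [\alpha,1]$, I would split the inner difference as
\begin{align*}
\int_{B_r(x)} d(y-x,P)^2 \, d\V(y) - \int_{B_r(z)} d(y-z,P)^2 \, d\V(y)
& = \int_{B_r(x) \cap B_r(z)} \bigl[ d(y-x,P)^2 - d(y-z,P)^2 \bigr] d\V(y) \\
& \quad + \int_{B_r(x) \setminus B_r(z)} d(y-x,P)^2 \, d\V(y) \\
& \quad - \int_{B_r(z) \setminus B_r(x)} d(y-z,P)^2 \, d\V(y),
\end{align*}
and treat the two types of contributions separately after dividing by $r^{d+3}$ and integrating on $[\alpha,1]$.

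For the interior term, using that $d(\cdot,P)$ is $1$-Lipschitz and dominated by $|\cdot|$, one obtains on $B_r(x) \cap B_r(z)$ the inequality
\[
\bigl| d(y-x,P)^2 - d(y-z,P)^2 \bigr| \leq |x-z| \bigl( |y-x| + |y-z| \bigr) \leq 2r\,|x-z|,
\]
in which $P$ does not appear. Integrating against $\V$ on $\Omega$ and dividing by $r^{d+3}$ leaves a function of $r$ of the form $2|x-z|\V(\Omega)/r^{d+2}$, whose integral on $[\alpha,1]$ is of order $|x-z|/\alpha^{d+1}$ and therefore tends to $0$ as $z \to x$, uniformly in $P$.

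For the symmetric-difference term, the integrands are trivially bounded by $r^2$, so the contribution after dividing by $r^{d+3}$ is at most $\V(B_r(x) \triangle B_r(z))/r^{d+1}$. The inclusion $B_r(x) \triangle B_r(z) \subset R(x,r-|x-z|,r+|x-z|)$ used in the proof of Lemma~\ref{mesure_difference_sym_boules}, combined with continuity from above of the finite measure $\V$, gives $\V(B_r(x) \triangle B_r(z)) \to \V(\partial B_r(x))$ as $z \to x$, and by Proposition~\ref{boundary_weight}$(i)$ the right-hand side vanishes for almost every $r$. The integrand $\V(B_r(x) \triangle B_r(z))/r^{d+1}$ is dominated by the integrable function $2\V(\Omega)/r^{d+1}$ on $[\alpha,1]$, so dominated convergence yields a vanishing contribution as $z \to x$, again without any dependence on $P$.

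Summing the two estimates gives a $P$-independent majorant of $|E_\alpha(x,P,V) - E_\alpha(z,P,V)|$ that vanishes as $z \to x$, which is exactly the claim. I do not foresee a genuine obstacle: the Lipschitz control of $d(\cdot,P)$ erases $P$ from the interior term, while the measure-theoretic input (the annular inclusion together with $\V(\partial B_r(x)) = 0$ for a.e. $r$) is what makes the dominated-convergence argument go through on the symmetric-difference remainder.
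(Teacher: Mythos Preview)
Your proof is correct and follows essentially the same approach as the paper: the same decomposition into the intersection $B_r(x)\cap B_r(z)$ and the symmetric difference $B_r(x)\triangle B_r(z)$, the same Lipschitz estimate $|d(y-x,P)^2-d(y-z,P)^2|\leq 2r|x-z|$ on the intersection, and the same annular inclusion plus dominated convergence for the symmetric-difference term. If anything, your write-up is slightly more explicit in invoking Proposition~\ref{boundary_weight}$(i)$ to conclude that the limiting integrand $\V(\partial B_r(x))$ vanishes for almost every $r$, which the paper leaves implicit.
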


\begin{proof}
First notice that for all $x, \, y, \, z \in \Omega$ and $P \in \G$,
\[
|d(y-x,P) - d(y-z,P)| = \big| |y-x - \pi_P (y-x)| - |y-z - \pi_P (y-z)| \big| \leq | z-x - \pi_P (z-x)| = d(z-x,P) \: .
\]
We now split $B_r(x) \cup B_r (z)$ into $\left( B_r (x) \cap B_r (z) \right)$ and $\left( B_r(x) \triangle B_r(z) \right)$ so that
\begin{align}
\left| \int_{B_r (x)} \right. & \left. \left( \frac{d(y-x,P)}{r} \right)^2 \, d \V(y) - \int_{B_r(z)} \left( \frac{d(y-z,P)}{r} \right)^2 \, d \V(y) \right| \nonumber \\
& \leq \int_{B_r (x) \cap B_r(z)} \left| \left( \frac{d(y-x,P)}{r} \right)^2 - \left( \frac{d(y-z,P)}{r} \right)^2 \right| \, d \V(y) \label{first_integral} \\
& + \int_{B_r(x) \setminus B_r (z)} \left(\frac{d(y-x,P)}{r} \right)^2 \, d \V(y) + \int_{B_r(z) \setminus B_r (x)} \left(\frac{d(y-z,P)}{r} \right)^2 \, d \V(y) \: . \label{two_other_integrals}
\end{align}
We use the estimate linking $d(y-x,P)$ and $d(y-z,P)$ to control the first integral and then we show that the two other terms tend to $0$.

\noindent Concerning the first integral $(\ref{first_integral})$:
\begin{align*}
& \int_{B_r (x) \cap B_r(z)} \left| \left( \frac{d(y-x,P)}{r} \right)^2 - \left( \frac{d(y-z,P)}{r} \right)^2 \right| \, d \V(y)\\
 \leq & \int_{B_r (x) \cap B_r(z)} 2 \left|  \frac{d(y-x,P)}{r} - \frac{d(y-z,P)}{r} \right| \, d \V(y) \\
 \leq & 2 \frac{|z-x|}{r} \V \left( B_r(x) \cap B_r(z)\right) \: .
\end{align*}
Concerning the two other integrals $(\ref{two_other_integrals})$:
\begin{align*}
\int_{B_r(x) \setminus B_r (z)} & \left( \frac{d(y-x,P)}{r} \right)^2 \, d \V(y) + \int_{B_r(z) \setminus B_r (x)} \left( \frac{d(y-z,P)}{r} \right)^2 \, d \V(y) \\
& \leq \V \left( B_r(x) \triangle B_r(z) \right) \leq R(x,r-|z-x|,r+|z-x|) \: ,
\end{align*}
where $\displaystyle R(x,r_{\text{min}},r_{\text{max}}) := \left\lbrace y \in \Omega \, | \, r_{\text{min}} \leq |y-x| \leq r_{\text{max}} \right\rbrace$.

\noindent Therefore,
\begin{align*}
& | E_\alpha (x,P,V) - E_\alpha (z,P,V) | \\
& \leq 2 |z-x| \int_{r = \alpha}^1 \V(B_r(x) \cap B_r(z)) \frac{dr}{r^{d+2}} + \int_{r = \alpha}^1 \V \left( B_r(x) \triangle B_r(z) \right) \frac{dr}{r^{d+1}} \\
& \leq \frac{2}{d+1} |z-x| \frac{1}{\alpha^{d+1}} \V(\Omega) + \frac{1}{\alpha^{d+1}} \int_{r = 0}^1 \V\left( R(x,r-|z-x|,r+|z-x|) \right) \, dr \: .
\end{align*}
The second term tends to $0$ when $|z-x| \rightarrow 0$, by dominated convergence, since \[
\lim_{z \to x} \V\left( R(x,r-|z-x|,r+|z-x|) \right) = \V (\partial B_r(x)) \: .
\]
\end{proof}

\subsection{$\Gamma$--convergence of $P \mapsto E_{\alpha_i}^\omega (x,P,V_i)$ to $P \mapsto E_0^\omega (x,P,V)$.}
 
\begin{prop} \label{liminf}
Let $\Omega \subset \R^n$ be an open set and let $\omega \subset\subset \Omega$ be a relatively compact open subset such that $\V(\partial \omega) = 0$. Let $(V_i )_i$ be a sequence of $d$--varifolds weakly--$\ast$ converging to $V $. Assume that $(\alpha_i)_i$ are chosen as explained in (\ref{alpha_choice}), uniformly in $\omega$. For $(S_i)_i \subset \G$ such that $S_i \xrightarrow[i \infty]{} S$ then, for all $x \in \omega$,
\[
\lim_{i \to \infty} E_{\alpha_i}^\omega  (x,S,V_i) = E_0^\omega (x,S,V) \leq \liminf_{i \to \infty} E_{\alpha_i}^\omega  (x,S_i,V_i) \: .
\]
\end{prop}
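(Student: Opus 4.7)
The plan is to separate the two claims. For the equality $\lim_i E_{\alpha_i}^\omega(x,S,V_i) = E_0^\omega(x,S,V)$, the uniform convergence hypothesis \eqref{alpha_choice} immediately yields $|E_{\alpha_i}^\omega(x,S,V_i) - E_{\alpha_i}^\omega(x,S,V)| \to 0$, so it suffices to observe that the integrand defining $E_\alpha^\omega(x,S,V)$ is nonnegative and the interval of integration $[\alpha_i, T]$ (with $T=\min(1,d(\overline{\omega},\Omega^c)/2)$) increases to $(0,T]$ as $\alpha_i\downarrow 0$; monotone convergence then gives $E_{\alpha_i}^\omega(x,S,V) \uparrow E_0^\omega(x,S,V)$.

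For the liminf inequality, I would first reduce from $V_i$ to $V$. Invoking \eqref{alpha_choice} at the variable point $S_i$, we get
\[
\bigl| E_{\alpha_i}^\omega(x,S_i,V_i) - E_{\alpha_i}^\omega(x,S_i,V) \bigr| \leq \sup_{y\in\overline{\omega},\,P\in\G}\bigl|E_{\alpha_i}^\omega(y,P,V_i) - E_{\alpha_i}^\omega(y,P,V)\bigr| \xrightarrow[i\to\infty]{} 0,
\]
so $\liminf_i E_{\alpha_i}^\omega(x,S_i,V_i) = \liminf_i E_{\alpha_i}^\omega(x,S_i,V)$. It remains to prove
\[
E_0^\omega(x,S,V) \leq \liminf_i E_{\alpha_i}^\omega(x,S_i,V),
\]
which I would obtain by Fatou's lemma applied on $(0,T] \times \omega$ to the nonnegative integrands
\[
f_i(r,y) := \frac{1}{r^{d+2}}\left(d(y-x,S_i)\right)^2 \mathds{1}_{B_r(x)\cap\omega}(y)\,\mathds{1}_{[\alpha_i,T]}(r)
\]
against the product measure $\xL^1\otimes\V$.

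The pointwise liminf of $f_i$ at a fixed $(r,y)$ with $r>0$ reproduces the integrand for $E_0^\omega(x,S,V)$: the indicator $\mathds{1}_{[\alpha_i,T]}(r)$ equals $1$ for $i$ large since $\alpha_i\downarrow 0$, and $d(y-x,S_i) \to d(y-x,S)$ because $P\mapsto d(y-x,P)$ is $\|\cdot\|$--Lipschitz (as recalled in the proof of Proposition \ref{E_i_x_continuity}) and $S_i\to S$ in $\G$. Fubini then rewrites both the limit and the liminf in the form $E_0^\omega(x,S,V)$ and $\liminf_i E_{\alpha_i}^\omega(x,S_i,V)$ respectively, giving the desired inequality. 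The main technical point to check is the continuity $d(y-x,S_i)\to d(y-x,S)$, but this is exactly the Lipschitz estimate already used in Proposition \ref{E_i_x_continuity}, so no new difficulty arises; the only subtlety is to make sure Fatou is applied with respect to $\xL^1\otimes\V$ rather than just along $r$ (otherwise one would need to commute liminf with an $r$-integral of $y$-integrals, which also works since everything is nonnegative).
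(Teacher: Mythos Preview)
Your proof is correct and in fact simpler than the paper's, though the route is genuinely different. (One harmless slip: the power of $r$ in your $f_i$ should be $r^{-(d+3)}$, not $r^{-(d+2)}$, to match the definition of $E_\alpha^\omega$.)

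For the equality both you and the paper argue identically: uniform convergence \eqref{alpha_choice} reduces $V_i$ to $V$, and monotone convergence in $\alpha$ finishes.

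For the liminf inequality the paper proceeds by a subsequence-and-Lipschitz argument: it first extracts $\phi$ so that $\|S_{\phi(i)}-S\|/\alpha_i^{d+1}\to 0$, then invokes the quantitative Lipschitz bound of Proposition~\ref{E_i_x_continuity} (whose constant blows up like $\alpha^{-(d+1)}$) to force $E_{\alpha_i}^\omega(x,S_{\phi(i)},V)\to E_0^\omega(x,S,V)$; monotonicity of $E_\alpha^\omega$ in $\alpha$ and a second pass to the liminf-realising subsequence complete the argument. Your approach sidesteps all of this: after reducing $V_i$ to $V$ via \eqref{alpha_choice}, you apply Fatou directly on $(0,T]\times\omega$ with respect to $\cL^1\otimes\V$, using only the pointwise continuity $d(y-x,S_i)\to d(y-x,S)$. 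This is more elementary---you need neither the blow-up rate of the Lipschitz constant nor any subsequence bookkeeping---and it also makes transparent why no rate of convergence $S_i\to S$ is required. The paper's argument, by contrast, highlights exactly which quantitative estimate is at stake, which may be informative elsewhere, but for this proposition your Fatou argument is the cleaner one.
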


\begin{proof}
By monotone convergence, we already know that 
\begin{equation} \label{monotone_convergence}
E_0^\omega (x ,S,V) = \lim_{i \to \infty} E_{\alpha_i}^\omega (x,S,V) \: .
\end{equation}
So we now want to estimate $\left| E_{\alpha_i}^\omega (x,S,V) - E_{\alpha_i}^\omega (x, S_i,V) \right|$. Let us start with extracting some $(S_{\phi (i)} )_i$ such that
\[
\|S_{\phi (i)} - S\| \frac{1}{\alpha_i^{d+1}} \xrightarrow[i \to \infty]{} 0
\]
so that we can now apply the regularity property (Proposition~\ref{E_i_x_continuity}) of $E_\alpha (x,P,V)$ with respect to $P$:
\[
\left| E_{\alpha_i}^\omega (x,S,V) - E_{\alpha_i}^\omega (x, S_{\phi(i)},V) \right| \leq \frac{2}{\alpha_i^{d+1}} \V (\omega) \| S - S_{\phi (i)} \| \xrightarrow[i \to \infty]{} 0 .
\]
thus
\begin{equation} \label{convergence_2}
E_0^\omega (x ,S,V) = \lim_{i \to \infty} E_{\alpha_i}^\omega (x, S_{\phi(i)},V) .
\end{equation}
Notice that $\phi$ only depends on $(\alpha_i)_i$.

\noindent As the sequence $(\alpha_i)_i$ is decreasing, $\alpha_{\phi(i)} \leq \alpha_i$ and then $E_{\alpha_i}^\omega (x, Q,V) \leq E_{\alpha_{\phi (i)}}^\omega (x, Q,V)$ for all $Q \in \G$, which implies in particular that
\begin{equation} \label{convergence_3}
\lim_{i \to \infty} E_{\alpha_i}^\omega (x, S_{\phi(i)},V) \leq \liminf_{i \to \infty} E_{\alpha_{\phi(i)}}^\omega (x, S_{\phi(i)},V) \: .
\end{equation}
We now apply the uniform convergence of $\left| E_{\alpha_i}^\omega (\cdot, \cdot, V) - E_{\alpha_i}^\omega (\cdot, \cdot, V_i) \right|$ \eqref{alpha_choice},
\begin{equation} \label{convergence_4}
\left| E_{\alpha_{\phi(i)}}^\omega (x, S_{\phi(i)}, V) - E_{\alpha_{\phi(i)}}^\omega (x, S_{\phi(i)}, V_{\phi(i)})  \right| \xrightarrow[i \to \infty]{} 0 \: ,
\end{equation}
so that by \eqref{convergence_2}, \eqref{convergence_3} and \eqref{convergence_4}
\begin{equation} \label{convergence_5}
E_0^\omega (x ,S,V) \leq \liminf_{i \to \infty} E_{\alpha_{\phi(i)}}^\omega (x, S_{\phi(i)}, V) = \liminf_{i \to \infty} E_{\alpha_{\phi(i)}}^\omega (x, S_{\phi(i)}, V_{\phi(i)}) \: .
\end{equation}
As $\liminf_i E_{\alpha_i}^\omega (x, S_i, V_i) = \lim_i E_{\alpha_{\theta(i)}}^\omega (x, S_{\theta(i)}, V_{\theta(i)})$ for some extraction $\theta$, we now apply \eqref{convergence_5} to these extracted sequences $(S_{\theta(i)})_i$ and $(V_{\theta(i)})_i$ so that there exists an extraction $\phi$ such that
\begin{align*}
E_0^\omega (x ,S,V) & \leq \liminf_{i \to \infty} E_{\alpha_{\theta(\phi(i))}}^\omega (x, S_{\theta(\phi(i))}, V_{\theta(\phi(i))}) \\
& = \lim_i E_{\alpha_{\theta(i)}}^\omega (x, S_{\theta(i)}, V_{\theta(i)}) \text{ since the whole sequence } E_{\alpha_{\theta(i)}}^\omega (x, S_{\theta(i)}, V_{\theta(i)}) \text{ converges} \\
& =  \liminf_i E_{\alpha_i}^\omega (x, S_i, V_i) \: .
\end{align*}

\end{proof}

\noindent We now turn to the consequences of this $\Gamma$--convergence property on the minimizers.

\begin{prop}
Let $V_i $ be a sequence of $d$--varifolds weakly--$\ast$ converging to $V $ in some open set $\Omega \subset \R^n$ and assume that $(\alpha_i)_i$ are chosen as explained in (\ref{alpha_choice}), uniformly in $\omega \subset\subset \Omega$ open subset such that $\V(\partial \omega) = 0$. For $x \in \omega$ and $i \in \N$, let $T_i (x) \in \argmin_{P \in \G} E_{\alpha_i}^\omega (x, P, V_i)$. Then,
\begin{enumerate}
\item Any converging subsequence of $(T_i(x))_i$ tends to a minimizer of $E_0^\omega (x, \cdot , V)$.
\item  $\displaystyle \min_{P \in \G} E_{\alpha_i}^\omega (x, P, V_i) \xrightarrow[i \to\infty]{} \min_{P \in \G} E_0^\omega (x, P, V)$.
\item If $V$ is an integral rectifiable $d$--varifold with bounded first variation then
\[
\argmin_{P \in \G} E_0^\omega (x, P, V) = \left\{ T_x M \right\} \: , 
\]
hence for $\V$--almost every $x$, $\displaystyle T_i (x) \xrightarrow[i \to \infty]{} T_x M$.
\end{enumerate} 
\end{prop}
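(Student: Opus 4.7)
The strategy is to apply the standard consequences of $\Gamma$-convergence for a family of functionals on the compact space $\G$: Proposition~\ref{liminf} already supplies both the $\liminf$ inequality along arbitrary converging sequences $S_i \to S$ and, taking $S_i = S$ constant, the ``recovery'' identity $\lim_i E_{\alpha_i}^\omega (x,S,V_i) = E_0^\omega(x,S,V)$. Compactness of $\G$ (as a closed bounded subset of $M_n(\R)$ for the operator norm) guarantees subsequential limits of $(T_i(x))_i$, and it is this combination of $\Gamma$-convergence plus compactness that delivers items 1--3.

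For item 1, fix $x \in \omega$ and extract $T_{\phi(i)}(x) \to T$. By Proposition~\ref{liminf} applied to this converging sequence, $E_0^\omega(x,T,V) \leq \liminf_i E_{\alpha_{\phi(i)}}^\omega(x,T_{\phi(i)}(x),V_{\phi(i)})$. On the other hand, for any candidate $P \in \G$, the minimality of $T_{\phi(i)}(x)$ gives $E_{\alpha_{\phi(i)}}^\omega(x,T_{\phi(i)}(x),V_{\phi(i)}) \leq E_{\alpha_{\phi(i)}}^\omega(x,P,V_{\phi(i)})$, and applying Proposition~\ref{liminf} to the constant sequence $P$ identifies the right-hand limit as $E_0^\omega(x,P,V)$. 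Chaining these inequalities shows $E_0^\omega(x,T,V) \leq E_0^\omega(x,P,V)$ for every $P$, i.e.\ $T$ minimizes $E_0^\omega(x,\cdot,V)$. For item 2, set $m_i := \min_P E_{\alpha_i}^\omega(x,P,V_i) = E_{\alpha_i}^\omega(x,T_i(x),V_i)$. By compactness every subsequence of $(T_i(x))_i$ admits a sub-subsequence converging to some minimizer $T$ of $E_0^\omega(x,\cdot,V)$, so $\liminf_i m_i \geq \min_P E_0^\omega(x,P,V)$. Conversely, picking any minimizer $P^\ast$ and using $m_i \leq E_{\alpha_i}^\omega(x,P^\ast,V_i) \to E_0^\omega(x,P^\ast,V) = \min_P E_0^\omega(x,P,V)$ gives the matching $\limsup$ bound.

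For item 3, I invoke Proposition~\ref{min_of_continuous_energy}: under the integral rectifiable + locally bounded first variation hypotheses, $E_0(x,P,V) = +\infty$ for every $P \neq T_x M$ at $\V$-a.e.\ $x$, while $E_0(x,T_xM,V) < +\infty$. The divergence in that proof comes entirely from the integrand blowing up as $r \downarrow 0$, which is internal to the interval of integration of $E_0^\omega$ (since $\omega$ is open and $x \in \omega$, small radii $r < \min(1, d(\overline{\omega},\Omega^c)/2)$ are included). Hence the same argument yields $E_0^\omega(x,P,V) = +\infty$ for $P \neq T_xM$ and $E_0^\omega(x,T_xM,V) \leq E_0(x,T_xM,V) < +\infty$, so $T_xM$ is the \emph{unique} minimizer. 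Combined with item 1 and compactness of $\G$, every subsequential limit of $(T_i(x))_i$ equals $T_xM$, forcing the whole sequence to converge.

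The only mildly subtle point — and the one I would verify carefully — is that the localization to $\omega$ does not destroy the divergence of $E_0^\omega(x,P,V)$ at non-tangential $P$; but since this divergence is a small-scale phenomenon captured entirely by the blow-up analysis at $x$ inside any fixed neighbourhood of $x$ in $\omega$, no further work beyond re-reading the proof of Proposition~\ref{min_of_continuous_energy} is needed. Everything else is a formal manipulation of the $\Gamma$-convergence provided by Proposition~\ref{liminf} together with the compactness of $\G$.
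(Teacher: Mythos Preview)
Your proof is correct and follows essentially the same $\Gamma$-convergence-plus-compactness argument as the paper: the chain $E_0^\omega(x,T_\infty,V) \leq \liminf_i E_{\alpha_i}^\omega(x,T_i,V_i) \leq \limsup_i E_{\alpha_i}^\omega(x,P,V_i) = E_0^\omega(x,P,V)$ for arbitrary $P$, followed by the uniqueness of the minimizer via Proposition~\ref{min_of_continuous_energy}. Your treatment is in fact slightly more explicit than the paper's on two points the paper glosses over: the existence of a minimizer $P^\ast$ of $E_0^\omega(x,\cdot,V)$ (which the paper only justifies in a remark \emph{after} the proof, via item~1 and compactness of $\G$), and the reason the localization to $\omega$ does not affect the small-$r$ blow-up in Proposition~\ref{min_of_continuous_energy}.
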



\begin{proof}
First, for fixed $x$ and $i$, $P \mapsto E_{\alpha_i}^\omega (x, P, V_i)$ is continuous and $\G$ is compact so that  ${ \argmin_{P \in \G} E_{\alpha_i}^\omega (x, P, V_i) \neq \emptyset }$. Let $T_i(x) \in \argmin_{P \in \G} E_{\alpha_i}^\omega (x, P, V_i)$ be a sequence of minimizers, as $\G$ is compact, one can extract a subsequence converging to some $T_\infty (x)$. Now applying the previous result (Proposition~\ref{liminf}), we get for every $P \in \G$,
\begin{align*}
E_0^\omega (x, T_\infty (x), V) & \leq \liminf_{i\to \infty} E_{\alpha_i}^\omega (x, T_i (x), V_i) \\
& \leq \limsup_{i\to \infty} E_{\alpha_i}^\omega (x, T_i (x), V_i)\\
& \leq \limsup_{i\to \infty} E_{\alpha_i}^\omega (x, P, V_i) \\
& = \lim_i E_{\alpha_i}^\omega (x, P, V_i) = E_0^\omega (x, P, V)  \\
& \leq E_0^\omega (x, T_\infty (x), V) \text{ for } P=T_\infty (x) \: .
\end{align*}
Therefore $T_\infty(x)$ minimizes $E_0^\omega (x, \cdot, V)$ which allows to conclude that the limit of any subsequence of minimizers of $E_{\alpha_i}^\omega (x, \cdot , V_i)$ is a minimizer of $E_0^\omega (x, \cdot, V)$. It also proves that 
\[
\lim_i \min_{P \in \G} E_{\alpha_i}^\omega (x, P, V_i) = \lim_i E_{\alpha_i}^\omega (x, T_i (x), V_i) = E_0^\omega (x, T_\infty (x), V) = \min_{P \in \G} E_0^\omega (x, P, V) \: .
\]
Assume now that $E_0^\omega (x, \cdot , V)$ admits a unique minimizer $T(x)$. We have just shown that every subsequence of $(T_i(x))_i$ converges to $T(x)$. As $\G$ is compact, it is enough to show that the whole sequence is converging to $T(x)$. Now if $V$ is an integral $d$--rectifiable varifold with bounded first variation, for $\V$--almost every $x$, $T_x M$ is the unique minimizer of $E_0^\omega (x, \cdot , V)$ (see Prop. \ref{min_of_continuous_energy}) so that for $\V$--almost every $x \in \omega$,
\[
T_i (x) \xrightarrow[i \to \infty]{} T_x M \: .
\]
\end{proof}

\begin{remk}
Since $E_0^\omega (x, \cdot, V)$ has no continuity property, the existence of a minimizer of $E_0^\omega (x, \cdot, V)$ is not clear a priori. However, as $\G$ is compact, every sequence of minimizers $(T_i(x))_i$ admits a converging subsequence so that $\argmin_{P \in \G} E_0^\omega (x, P, V)$ is not empty.
\end{remk}

We end with studying the continuity of the minimum $\min_{P \in \G} E_{\alpha_i} (x, P,V_i)$ with respect to $x$ (for fixed $i$ and $V_i$).

\begin{prop}
Assume that $V_i$ weakly--$\ast$ converges to $ V$ in some open set $\Omega \subset \R^n$ and let $(\alpha_i)_i > 0$. Then for every fixed $i$ and $\omega \subset\subset \Omega$, the function $\displaystyle x \mapsto \min_{P \in \G} E_{\alpha_i}^\omega (x,P,V_i)$ is continuous  in $\omega$.

\noindent Moreover, every converging sequence of minimizers $\left( T_i (z_k) \in \argmin_P E_{\alpha_i}^\omega (z_k,P,V_i) \right)_k$ tends to a minimizer of $E_{\alpha_i}^\omega (x,\cdot,V_i)$ when $z_k \rightarrow x$ and for a fixed $i$.
\end{prop}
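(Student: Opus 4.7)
The plan is to reduce both assertions to two regularity properties of the energy $P \mapsto E_{\alpha_i}^\omega(x,P,V_i)$ that are already at our disposal: the Lipschitz continuity in $P$ granted by Proposition~\ref{E_i_x_continuity} and a spatial continuity uniform in $P$ obtained by adapting Proposition~\ref{spatial_regularity} to the truncated energy $E_{\alpha_i}^\omega$. Indeed, reproducing verbatim the proof of Proposition~\ref{spatial_regularity} (splitting $B_r(x) \cup B_r(z)$ into the intersection and the symmetric difference, using $|d(y-x,P) - d(y-z,P)| \leq |z-x|$, and controlling the mass of thin annuli by $\|V_i\|(R(x,r-|z-x|,r+|z-x|))$) gives, for fixed $i$,
\[
\sup_{P \in \G} \left| E_{\alpha_i}^\omega (x,P,V_i) - E_{\alpha_i}^\omega (z,P,V_i) \right| \xrightarrow[z \to x]{} 0 \: .
\]
The restriction of the integration domain to $\omega$ and the modified upper bound $\min(1, d(\overline{\omega},\Omega^c)/2)$ only make the relevant masses smaller, so the argument transfers without modification.

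To prove continuity of $m(x) := \min_{P \in \G} E_{\alpha_i}^\omega (x,P,V_i)$, the existence of a minimizer $T_i(x)$ is first ensured by the continuity of $P \mapsto E_{\alpha_i}^\omega(x,P,V_i)$ (Proposition~\ref{E_i_x_continuity}) combined with the compactness of $\G$. Picking minimizers $T_i(x)$ and $T_i(z)$, the elementary two-sided comparison
\[
m(z) \leq E_{\alpha_i}^\omega(z, T_i(x), V_i), \qquad m(x) \leq E_{\alpha_i}^\omega(x, T_i(z), V_i),
\]
together with their symmetric counterparts, yields
\[
|m(x) - m(z)| \leq \sup_{P \in \G} \left| E_{\alpha_i}^\omega(x, P, V_i) - E_{\alpha_i}^\omega(z, P, V_i) \right|,
\]
which tends to $0$ as $z \to x$.

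For the second assertion, let $z_k \to x$ and $T_i(z_k) \to T_\infty \in \G$. For every $Q \in \G$, the minimizing property gives
\[
E_{\alpha_i}^\omega(z_k, T_i(z_k), V_i) \leq E_{\alpha_i}^\omega(z_k, Q, V_i).
\]
The right-hand side converges to $E_{\alpha_i}^\omega(x, Q, V_i)$ by the uniform spatial continuity established above. For the left-hand side, inserting $E_{\alpha_i}^\omega(x, T_i(z_k), V_i)$ as an intermediate term, I would combine the same uniform spatial continuity with the Lipschitz continuity of $P \mapsto E_{\alpha_i}^\omega(x, P, V_i)$ at $T_\infty$ (Proposition~\ref{E_i_x_continuity}) to conclude $E_{\alpha_i}^\omega(z_k, T_i(z_k), V_i) \to E_{\alpha_i}^\omega(x, T_\infty, V_i)$. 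Passing to the limit in the inequality then gives $E_{\alpha_i}^\omega(x, T_\infty, V_i) \leq E_{\alpha_i}^\omega(x, Q, V_i)$ for every $Q \in \G$, so that $T_\infty \in \argmin_{P \in \G} E_{\alpha_i}^\omega(x, P, V_i)$.

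No serious obstacle is expected: the main point is simply to observe that the proofs of Propositions~\ref{E_i_x_continuity} and~\ref{spatial_regularity}, written for $E_\alpha$, adapt without change to the localised version $E_\alpha^\omega$, and that the minimum of a jointly regular function on a compact parameter space inherits the continuity of its sections, a standard $\argmin$ stability pattern.
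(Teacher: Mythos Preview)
Your proof is correct. The argument for the convergence of minimizers is essentially identical to the paper's: both insert $E_{\alpha_i}^\omega(x,T_i(z_k),V_i)$ as an intermediate term and invoke Propositions~\ref{E_i_x_continuity} and~\ref{spatial_regularity}.

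The continuity of $m(x)=\min_P E_{\alpha_i}^\omega(x,P,V_i)$, however, is handled differently. You use the direct bound
\[
|m(x)-m(z)| \leq \sup_{P\in\G}\left| E_{\alpha_i}^\omega(x,P,V_i)-E_{\alpha_i}^\omega(z,P,V_i)\right|,
\]
obtained from the two one-sided comparisons, and conclude immediately from the uniform spatial continuity. The paper instead reverses the order: it first proves the minimizer-convergence statement, and then deduces continuity by extracting, from an arbitrary sequence $z_k\to x$, a subsequence realizing the $\limsup$ of $m(z_k)$, a further subsequence along which $T_i(z_k)$ converges in $\G$, and then applying the minimizer-convergence result to identify the limit as $m(x)$; the same is done for the $\liminf$. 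Your route is shorter and avoids the compactness-of-$\G$ extraction for the continuity part; the paper's route has the minor advantage of reusing a single computation for both conclusions.
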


\begin{remk}
As $i$ is fixed, meaning actually that a scale $\alpha = \alpha_i > 0$ and a $d$--varifold $V = V_i$ are fixed, we keep the notations $V_i$ and $\alpha_i$, with the explicit index $i$, only to be coherent with the whole context of this section and with the notations of the previous results. But that is why we do not assume anything on the choice of $\alpha_i>0$ and $\omega \subset\subset \Omega$.
\end{remk}

\begin{proof}
Let $i$ be fixed. First we show that if $(z_k)_k \subset \omega$ is such that 
\[ \left\lbrace \begin{array}{l}
\left| z_k - x \right| \xrightarrow[k \to \infty]{} 0 \\
T_i (z_k) \xrightarrow[k \to \infty]{} T_i^\infty \text{ where } T_i(z_k) \in \argmin_P E_{\alpha_i}^\omega (z_k,P,V_i) \: ,
\end{array} \right. \]
then,
\begin{equation} \left\lbrace \begin{array}{l}
T_i^\infty \in \argmin_P E_{\alpha_i}^\omega (x,P,V_i) \text{ and }\\
\min_P E_{\alpha_i}^\omega (z_k,P,V_i) = E_{\alpha_i}^\omega (z_k , T_i(z_k) , V_i) \xrightarrow[k \to \infty]{} E_{\alpha_i}^\omega (x,T_i^\infty,V_i) = \min_P E_{\alpha_i}^\omega (x,P,V_i) \: . \label{convergence_tangent_plane_estimator_1}
\end{array} \right.
\end{equation}
Indeed,
\begin{align*}
\left| E_{\alpha_i}^\omega (x , T_i^\infty , V_i) \right. & - \left. E_{\alpha_i}^\omega (z_k, T_i(z_k), V_i) \right| \\
\leq & \left| E_{\alpha_i}^\omega (x, T_i^\infty , V_i) - E_{\alpha_i}^\omega (x,T_i(z_k),V_i) \right| + \left| E_{\alpha_i}^\omega (x , T_i(z_k) , V_i) - E_{\alpha_i}^\omega (z_k, T_i(z_k) , V_i) \right| \\
\leq & K(\alpha_i) \| T_i^\infty - T_i(z_k) \| + \sup_P \left| E_{\alpha_i}^\omega (x,P,V_i) - E_{\alpha_i}^\omega (z_k,P,V_i) \right|
\end{align*}
applying Proposition~\ref{E_i_x_continuity} to the first term, $K(\alpha_i)$ is a constant depending only on $\alpha_i$. Moreover, by Proposition~\ref{spatial_regularity}, the second term tends to zero when $k$ goes to $\infty$. Consequently,
\[
E_{\alpha_i} (x, T_i^\infty, V_i) = \lim_{k \to \infty} E_{\alpha_i} (z_k , T_i(z_k) , V_i) \: .
\]
And for every $P \in \G$,
\begin{align*}
E_{\alpha_i}^\omega (x , T_i^\infty, V_i) & = \lim_{k \to \infty} E_{\alpha_i}^\omega (z_k , T_i(z_k) , V_i) \\
&  \leq \lim_{k \to \infty} E_{\alpha_i}^\omega (z_k , P , V_i)\\
& = E_{\alpha_i}^\omega (x , P ,V_i) \rm{\: by \: Proposition~\ref{spatial_regularity}},
\end{align*}
which yields \eqref{convergence_tangent_plane_estimator_1}.

\noindent It remains to prove the continuity of $\displaystyle x \mapsto \min_{P \in \G} E_{\alpha_i}^\omega (x,P,V_i)$. Let $x$ and $(z_k)_k \in \omega$ be such that $z_k \xrightarrow[k \to \infty]{} x$ and consider a subsequence $(z_{\phi(k)})_k$ such that 
\begin{equation} \label{convergence_tangent_plane_estimator_2}
\limsup_k E_{\alpha_i}^\omega (z_k , T_i(z_k) , V_i) = \lim_k E_{\alpha_i}^\omega (z_{\phi(k)} , T_i(z_{\phi(k)}) , V_i) \: .
\end{equation} 
As $\G$ is compact, there exists an extraction $\theta$ such that $(T_i(z_{\phi(\theta(k))}))_k$ is converging and then applying the previous argument \eqref{convergence_tangent_plane_estimator_1} to $(z_{\phi(\theta(k))})_k$ and $(T_i(z_{\phi(\theta(k))}))_k$,
\begin{equation} \label{convergence_tangent_plane_estimator_3}
\lim_{k \to + \infty} E_{\alpha_i}^\omega \left( z_{\phi(\theta(k))} , T_i(z_{\phi(\theta(k))}) , V_i \right) = \min_{P \in \G} E_{\alpha_i}^\omega (x , P , V_i) \: .
\end{equation}
Eventually, by \eqref{convergence_tangent_plane_estimator_2} and \eqref{convergence_tangent_plane_estimator_3},
\[
\limsup_{k \to +\infty} E_{\alpha_i}^\omega (z_k , T_i(z_k) , V_i ) =  \min_{P \in \G} E_{\alpha_i}^\omega (x , P , V_i) \: .
\]
Similarly $\displaystyle \liminf_k E_{\alpha_i}^\omega (z_k , T_i(z_k) , V_i) =  \min_P E_{\alpha_i}^\omega (x , P , V_i)$ which concludes the proof of the continuity.
\end{proof}

\end{appendices}

\textbf{Acknowledgements.} I would like to thank my PhD advisors Gian Paolo Leonardi and Simon Masnou for their constant support and encouragement.

\nocite{*}
\bibliographystyle{plain}
\bibliography{biblio_rectifiability}

\end{document}